\newtheorem{theorem}{Theorem}[section]
\newtheorem{remark}[theorem]{Remark}
\newtheorem{lemma}[theorem]{Lemma}
\newtheorem{corollary}[theorem]{Corollary}
\newcommand{\sfR}{{\mathsf R}}
\newcommand{\mQ}{\mathcal{Q}}
\newcommand{\eps}{\varepsilon}
\newcommand{\E}{\mathbb E}
\newcommand{\Rm}{\mathbb R}
\newcommand{\dint}{\displaystyle\int}
\newcommand{\tps}{\mathrm{t}}
\newcommand{\aver}[1]{\langle {#1} \rangle}
\newcommand{\sL}{{\mathsf L}}
\title{Long distance propagation of wave beams in paraxial regime}
\author{Guillaume Bal \thanks{Departments of Statistics and Mathematics and Committee on Computational and Applied Mathematics, University of Chicago, Chicago, IL 60637; guillaumebal@uchicago.edu} \and Anjali Nair \thanks{Department of Statistics and Committee on Computational and Applied Mathematics, University of Chicago, Chicago, IL 60637; anjalinair@uchicago.edu}}
\date{\today}
\begin{document}

\maketitle

\begin{abstract}
This paper concerns the propagation of high frequency wave-beams in highly turbulent atmospheres. Using a paraxial model of wave propagation, we show in the long-distance weak-coupling regime that the wavefields are approximately described by a complex Gaussian field whose scintillation index is unity. This provides a model of the speckle formation observed in many practical settings. The main step of the derivation consists in showing that closed-form moment equations in the It\^o-Schr\"odinger regime are still approximately satisfied in the paraxial regime. {\color{black}This is done via Duhamel expansions, under the assumption that the random medium has Gaussian statistics.} The rest of the proof is then an extension of results derived in \cite{bal2024complex}.
\end{abstract}

\noindent{\bf Keywords:}
Wave propagation in random media; Paraxial regime; It\^o-Schr\"odinger regime; Scintillation; Gaussian conjecture

\section{Introduction}

\label{sec:intro}

A widely accepted conjecture in the physical literature states that high frequency waves eventually become complex circular Gaussian distributed after propagating over long distances through randomly varying media~\cite{goodman1976some, sheng1990scattering}. The real and imaginary parts of the wave field then become independent Gaussian distributed. The wave intensity follows an exponential distribution asymptotically, implying a scintillation index converging to unity. This forms a model for speckle patterns consistent with experimental observations~\cite{andrews2001laser, carminati2021principles}. This conjecture is not supported mathematically for general models of wave propagation based on Helmholtz equations. The highly complex nature of wave propagation in random media  makes mathematical analyses of asymptotic models inherently difficult; see \cite{BKR-KRM-10, garnier2018multiscale} for a review of existing results.

A simpler setting is that of the paraxial approximation, where the wave-field retains its beam-like structure for long distances while backscattering is neglected; see \cite{bailly1996parabolic,garnier2009coupled} for theoretical justifications of such a regime. A further approximation is the It\^{o}-Schr\"{o}dinger white noise limit of the paraxial model, which using It\^o calculus, is characterized by closed form PDEs for statistical moments of all orders; see  \cite{dawson1984random} for an analysis of the white noise model and \cite{fannjiang2004scaling} for a derivation starting from the paraxial approximation. 

While analytical solutions to these PDEs are not known in general, they form an important tool to carry out long-distance asymptotic analyses. The structure of these PDEs up to the fourth moment was used in a series of articles~\cite{garnier2014scintillation, garnier2016fourth, garnier2022scintillation, garnier2023wave, garnier2023speckle} to understand scintillation of the energy of the wavefield. 

The Gaussian conjecture was proved for the It\^{o}-Schr\"{o}dinger model of wave propagation in the scintillation (weak-coupling) regime in \cite{bal2024complex}. The derivation is based on a careful analysis of these PDEs for statistical moments of arbitrary order. That work identifies two scaling limits. In the first one called the {\em kinetic regime}, random fluctuations with small amplitude accumulate over medium-long distances of propagation so that the effective turbulence strength is of order $O(1)$. In the longer-distance, second scaling called the {\em diffusive regime}, the cumulative strength of turbulence is larger than $O(1)$. In the latter scaling, the mean wave-field vanishes while its higher statistical moments are indeed consistent with those of a circular Gaussian random variable. The observed wavefield intensity then follows an exponential distribution, characterising the speckle patterns observed in experiments.

From a technical standpoint, the analysis of such moments in \cite{bal2024complex} is primarily carried out in the Fourier domain, with the error controlled in the total variation sense. The main advantage of this topology is that it allows us to obtain error estimates in the physical domain in the uniform sense. Other related scaling limits have also been obtained for a Fourier-domain representation of a phase-compensated wavefield in~\cite{bal2011asymptotics, gu2021gaussian}. 

We note that the Gaussian conjecture is not always true, even in the It\^o-Schr\"odinger regime. It has been shown that in the so-called spot dancing regime, the wave field approximately follows a Rice-Nakagami distribution instead for Gaussian incident beams~\cite{furutsu1973spot, dawson1984random}. Such a regime is characterized by relatively narrow beams travelling through very strong turbulence over relatively short distances. The wave field then experiences significant beam wander with the overall profile of the beam diffracting as in the uniform medium. In contrast, the scintillation regime is characterized by broad incident beams propagating through weak turbulence over long distances. In such a regime, the wavefield decorrelates rapidly along the lateral cross-section of the beam, which is consistent with  speckle patterns observed in physical experiments.     

\medskip

The main objective of this paper is to prove the Gaussian conjecture in the scintillation regime starting from the paraxial approximation of the scalar Helmholtz equation. As in the derivation for the It\^{o}-Schr\"{o}dinger equation in \cite{bal2024complex}, this requires sufficiently rapid decorrelations of the random fluctuations along the direction of propagation in the rescaled variables. The presence of long range correlations leads to considerably different regimes~\cite{gomez2017fractional, borcea2023paraxial}. These requirements are encoded in the following scaling of wave propagation.

The starting microscopic model of wave-beam propagation in turbulent media is the following scalar Helmholtz equation:
\begin{equation}\label{eqn:Helmholtz}
      \Big( \partial^2_z + \Delta_x + k_0^2n^2(z,x)\Big) p(z,x) = {\color{black}+}u_0(x) \delta_0{\color{black}'}(z)\,, 
\end{equation}
where $k_0$ is the carrier wave number of the source, $z\in\Rm$ is depth and $x\in\Rm^d$ for $d\geq1$ denotes transverse spatial variables.  Suppose $n^2(z,x)=1+\nu(z,x)$ where $\nu$ is a random coefficient modeled as a mean zero Gaussian random process. We consider a high frequency long propagation scaling given by
\begin{equation}\label{eq:scalings}
  z \to \frac{\eta z}{\eps\theta},\qquad x \to x,\qquad k_0\to \frac{k_0}\theta,\qquad 
  \nu \to \frac{\eps^{\frac12}\theta^{\frac32}}{\eta^{\frac32}} \nu
  \qquad u_0\to u_0^\theta,
\end{equation}
with $0<\theta\ll1$. As in \cite{bal2024complex}, we consider the weak coupling regime $0<\eps\ll1$ in two sub-regimes, the {\em kinetic} regime when $\eta=1$ and the {\em diffusive} regime when $\eta\ll1$. That the latter regime corresponds to long-distance propagation (optically thick media) may be obtained from the fact that distance of propagation times random strength (proportional to $\nu^2$) is $\eta/\eps \times \eps /\eta^3 = \eta^{-2} \gg1$. We will see below that $\eps$ models the width of the incident beam $u_0$. {\color{black} We also provide a discussion of these scalings in application at the end of Section~\ref{sec:main}.} For technical reasons as in \cite{bal2024complex}, we need a large scale separation between $\eps$ and $\eta$ in the diffusive regime and assume that $\eta=(\log|\log\eps|)^{-1}$. 

The results obtained in this paper hold in the kinetic regime so long as both $\eps\to0$ and $\theta\to0$. In the diffusive regime, we will need in addition $\theta e^{\frac C{\eta^2}}/\eta^2$ to be controlled and asymptotically small.  In order to simplify notation and not carry through the small parameters $(\theta,\eps,\eta)$, we assume that $\eps=\eps(\theta)$ as a function (continuous from $(0,\frac12]$ to $(0,\frac12]$) such that $\eps(\theta)\to0$ as $\theta\to0$. Therefore, $\eta=\eta(\theta)$ implicitly. We also assume for concreteness the existence of $0<\gamma$ so that
\begin{equation}\label{eq:csteps}
    0< \theta \leq \eps^{\gamma}(\theta),\qquad \eta(\theta):=(\log|\log\eps(\theta)|)^{-1}.
\end{equation}
We verify that $\theta^\alpha e^{\frac C{\eta^2}}$ is then bounded for any $\alpha>0$.

We now consider the standard paraxial envelope ansatz $u^\theta(z,x)=p\big(\frac{\eta z}{\eps\theta},x\big)e^{-\frac{i\eta k_0z}{\eps\theta^2}}$. After formally ignoring the backscattering term $(\frac{\eps\theta}{\eta})^2\partial^2_zu^\theta$ in \eqref{eqn:Helmholtz}, we arrive at the paraxial approximation
\begin{equation}\label{eq:paraxial}
    2ik_0\partial_zu^\theta+\frac{\eta}{\eps}\Delta_x u^\theta +\frac{k_0^2}{(\eta\eps\theta)^{\frac 12}}\nu\Big(\frac{\eta z}{\eps\theta},x\Big)u^\theta=0,\quad  z>0, x\in\mathbb{R}^d\,,
\end{equation}
with incident beam profile $u^\theta(0,x)=u^\theta_0(x)$. We will see below that the incident beams of our theory are broad, and more precisely functions of $\eps^\beta x$ for $\beta\geq1$. The justification of the paraxial regime from the full Helmholtz equation is difficult, with theoretical results obtained in \cite{bailly1996parabolic,garnier2009coupled}. The envelope equation \eqref{eq:paraxial} is our starting point. 

The objective of this paper is to show that in the limit $\theta\to0$, $u^\theta$ satisfies in law a limiting model consistent with complex Gaussianity and in the diffusive regime with a scintillation index equal to $1$; see Corollary \ref{cor:scint} below for notation. In this paper, we restrict ourselves to a setting of Gaussian fluctuations $\nu(z,x)$ to simplify the derivation.

The main step of our derivation is to show that the statistical moments of the paraxial solution $u^\theta$ in \eqref{eq:paraxial} are well approximated by the corresponding moments of the solution to the following It\^o-Schr\"odinger equation:
\begin{equation}\label{eq:uSDEd}
      \mathrm{d}u^\eps=\frac{i\eta}{2k_0\eps}\Delta_xu^\eps\mathrm{d}z-\frac{k_0^2R(0)}{8\eta^2}u^\eps\mathrm{d}z+\frac{ik_0}{2\eta}u^\eps\mathrm{d}B, \qquad u^\eps(0,x)=u_0^\eps(x).
\end{equation}
The above equation is deduced from \eqref{eq:paraxial} by formally replacing $\theta^{-\frac12}\nu(\frac{z}{\theta},x)\mathrm{d}z$ by its central limit approximation $\mathrm{d}B(z,x)$, where $B(z,x)$ is a mean-zero Gaussian process satisfying $\mathbb{E}[B(z,x)B(z',y)]=\min(z,z') R(x-y)$ with spatial covariance $R(x)$ defined in \eqref{eqn:R(x)} below. The product $u^\eps\mathrm{d}B$ should be interpreted in a standard It\^o form \cite{dawson1984random,fannjiang2004scaling}.
That the complex circular Gaussian conjecture holds for $u^\eps$ in the limit $\eps\to0$ was shown recently in \cite{bal2024complex}. Careful moment estimates of $u^\theta$ allow us to extend such a result to the paraxial model of wave propagation.

\medskip

The rest of the paper is structured as follows. The next Section \ref{sec:description} defines the statistical moments of interest in the derivation and describes our hypotheses on the (Gaussian) random medium. Our main results are collected in Section \ref{sec:main}. The proofs of the results are postponed to Section \ref{sec:proofs}.

We restrict ourselves to the setting of deterministic incident beams (fully coherent sources). The generalization in \cite{bal2024long} of the results in the It\^o-Schr\"odinger regime of \cite{bal2024complex} to the case of partially coherence sources can also presumably be extended to the paraxial regime.

\section{Description of the wavefield, random medium, and moments}\label{sec:description}

For simplicity, we set $2k_0=1$ since this (rescaled) parameter does not have any qualitative influence, and assume that the amplitude $\nu$ is appropriately rescaled so that the paraxial equation becomes
\begin{equation}\label{eqn:u_theta}
    \partial_zu^\theta=\frac{i\eta}{\eps}\Delta_x u^\theta+\frac{i}{(\eta\eps\theta)^{\frac 12}}\nu\Big(\frac{\eta z}{\eps\theta},x\Big)u^\theta.
\end{equation}
Define the (partial) Fourier transform of $u^\theta$ as
\begin{equation*}
\hat{u}^\theta(z,\xi)=\int\limits_{\mathbb{R}^d}u^\theta(z,x)e^{-i\xi\cdot x}\mathrm{d}x\,.
\end{equation*}
From \eqref{eqn:u_theta}, we deduce that
\begin{equation*}
    \partial_z\hat{u}^\theta=-\frac{i\eta}{\eps}|\xi|^2\hat{u}^\theta+\frac{i}{(\eta\eps\theta)^{\frac 12}}\int\limits_{\mathbb{R}^d}\hat{u}^\theta(z,\xi-k)\frac{{\color{black}\hat{\nu}(\frac{\eta z}{\eps\theta},\mathrm{d}k)}}{(2\pi)^d}, \quad \hat{u}^\theta(0,\xi)=\hat{u}^\theta_0(\xi)\,,
\end{equation*}
{\color{black} where we denote by $\hat{\nu}(z,\mathrm{d}k)$ the random measure such that $\nu(z,x)=\int_{\mathbb{R}^d}e^{ik\cdot x}\frac{\hat{\nu}(z,\mathrm{d}k)}{(2\pi)^d}$.} For a collection of points $(X,Y)=(x_1,\cdots,x_p,y_1,\cdots,y_q)\in\mathbb{R}^{(p+q)d}$, define the $p+q$th moment of the wavefield $u^\theta$ at fixed $z$ by
\begin{equation}\label{eq:mupq}
    \mu_{p,q}^\theta(z,X,Y)=\mathbb{E}[\prod\limits_{j=1}^pu^\theta(z,x_j)\prod\limits_{l=1}^qu^{\theta *}(z,y_l)]\,.
\end{equation}
Similarly, define the corresponding (complex symmetrized) Fourier transform by
\begin{equation*}
    \hat{\mu}_{p,q}^\theta(z,v)=\mathbb{E}[\prod\limits_{j=1}^p\hat{u}^\theta(z,\xi_j)\prod\limits_{l=1}^q\hat{u}^{\theta *}(z,\zeta_l)]\,,
\end{equation*}
where $v=(\xi_1,\cdots,\xi_p,\zeta_1,\cdots,\zeta_q)$ denotes the complex symmetrized (i.e., with inner product $\sum x_i\cdot\xi_i-\sum y_j\cdot\zeta_j$) vector of variables dual to $(X,Y)$. 

\paragraph{Gaussian random potential}

We assume that $\nu$ is a real valued Gaussian process stationary in $x$ and $z$ with covariance function
\begin{equation}\label{eq:covC}
    \mathbb{E}[\nu(z,x)\nu(z',x')]=\mathfrak{C}(z-z',x-x')\,.
\end{equation}
In particular, this implies that the Fourier transform of the covariance function with respect to $x$ and $x'$ is given by
\begin{equation*}
    (2\pi)^d\hat{\mathfrak{C}}(z-z',k)\delta(k+k')\,.
\end{equation*}
 We assume that $\hat{\mathfrak{C}}(z,\cdot)$ is integrable in $z$, smooth in $k$ and rapidly decaying for large values of $z$. For concreteness and to simplify some derivations, we assume that for $\langle k\rangle:=\sqrt{1+|k|^2}$, we have 
 \begin{equation}\label{eqn:C_n_assump}
{\color{black}\int_{\Rm^{d+1}}\aver{k}^{{d_0+3}}\aver{s}  |\hat{\mathfrak{C}}(s,k)| \mathrm{d}s\mathrm{d}k   = \lambda_1(d_0) <\infty}\,,
\end{equation}
where we assume that $d_0\geq d$. Except for improved regularity results obtained when $d_0\geq d$ in Lemma \ref{lemma:R1_theta_bound} (and Lemma \ref{lem:E_psi_theta_bounds}) and for better stochastic continuity properties in Theorem \ref{thm:tightness}, there is no loss in generality to assume that $d_0=d$, the lateral spatial dimension. 
We assume the standard symmetries for correlation functions: {\color{black}$\mathfrak{C}(z,x)=\mathfrak{C}(-z,-x)$} and denote by $R(x)$ the lateral spatial covariance
\begin{equation}\label{eqn:R(x)}
    R(x)=\int\limits_{-\infty}^\infty\mathfrak{C}(z,x)\mathrm{d}z\,.
\end{equation}
{\color{black}Note that the Fourier transform $\hat{R}(k)$ is the power spectral density of a stationary process and is non-negative from Bochner's theorem.} We next assume that $R\in \sL^1(\mathbb{R}^d)\cap \sL^\infty(\mathbb{R}^d)$ with the Fourier transform $\hat{R}(k)=\hat{R}(-k)\le \hat{\sfR}(k)= \hat{\sfR}(|k|)$ for some radially symmetric function $\hat{\sfR}$. 

We deduce from \eqref{eqn:C_n_assump} that $\langle k\rangle^2\hat{R}(k)\in \sL^1(\mathbb{R}^d)$, and when $d\ge 3$ further assume that $\langle k\rangle^{d-2}\hat{R}(k)\in\sL^\infty(\mathbb{R}^d)$. We finally assume that $R(x)$ is maximal at $x=0$ and that $\Xi=\nabla^2R(0)$ exists and is (strictly) negative definite. 

As in~\cite{fannjiang2004scaling}, we denote the potential in the rescaled coordinates by
\begin{equation}\label{eqn:nu_theta_def}
    \nu^\theta_z(\cdot)=\nu\Big(\frac{\eta z}{\eps\theta},\cdot\Big)\,,
\end{equation}
and the corresponding covariance as
  \begin{equation}\label{eqn:C_def}
            {\mathfrak{C}}^\theta(s,x)={\mathfrak{C}}\Big(\frac{\eta s}{\eps\theta},x\Big),
        \end{equation}
with the Fourier transform $\hat{\mathfrak{C}}^\theta(s,k)$ defined in a similar manner.
\paragraph{Assumptions on the source}
We assume a deterministic incident beam profile of the form
\begin{equation}\label{eq:u0eps}
    u_0^\theta(x)=u_0(\eps^\beta x)\,,
\end{equation}
with $u_0\in\mathcal{S}(\mathbb{R}^d)$ and $\beta\geq1$. This profile corresponds to a wide beam when $\eps\ll1$. The scaling $\beta=1$ is arguably the more interesting one. The case $\beta>1$ is a good model for an incident plane wave. Note that in the Fourier domain,
\begin{equation*}
    \hat{u}^\theta_0(k)=\eps^{-\beta d}\hat{u}_0\Big(\frac{k}{\eps^\beta}\Big)\,.
\end{equation*}
Due to this scaling, we have that for all $n\ge 0$,
\begin{equation*}
    \|\langle k\rangle^{n}\hat{u}^\theta_0(k)\|\le  \|\langle \eps^{-\beta}k\rangle^{n}\hat{u}^\theta_0(k)\|=\|\langle k\rangle^{n}\hat{u}_0(k)\|\le C\,
\end{equation*}
uniformly in $\eps\in(0,1]$. {\color{black}Here and below denote by $\mathcal{M}_B(\mathbb{R}^{m})$, the Banach space of finite signed measures on $\mathbb{R}^{m}$, equipped with the total variation (TV) norm $\|\cdot\|$. For measures with a density with respect to the Lebesgue measure (such as $\hat u^\theta_0(k)dk$ above with density $\hat u^\theta_0(k)$) then that norm is the {\color{black}$\sL^1(\mathbb{R}^{d})$} norm of that density. We use the notation $\|\cdot\|$ both for the TV norm of bounded Radon measures or for the operator norm of operators acting on such measures. The standard $\sL^p$ norms are denoted by $\|\cdot\|_p$.}

\paragraph{Spatially rescaled random vector}
As in \cite{bal2024complex}, we define the spatially rescaled random process $\phi^\theta$ as
\begin{equation}\label{eq:phitheta}
    \phi^\theta(z,r,x)=u^\theta(z,\eps^{-\beta}r+\eta x).
\end{equation}
It is for such a rescaled wavefield that we can expect a non-trivial limit as $\theta\to0$ and $\eps\to0$.

An element of the convergence result is the identification of finite dimensional distributions of $\phi^\theta$. Thus 
for $X=(x_1,\cdots,x_N)\in\mathbb{R}^{Nd}$, we also define a vector of such processes as
\begin{equation}\label{eq:Phitheta}
    \Phi^\theta(z,r,X)=(\phi^\theta(z,r,x_1),\cdots,\phi^\theta(z,r,x_N))\,.
\end{equation}
The moments of this random vector will be shown to be well approximated by that of a circularly symmetric Gaussian random vector in the limit $(\theta,\eps)\to 0$ (provided \eqref{eq:csteps} holds).

\section{Main results}\label{sec:main}

The limiting approximations for {\color{black}$\phi^\theta$} as $\theta\to0$ in the kinetic and diffusive regimes are the same as for the It\^o-Schr\"odinger model (as $\eps\to0$) treated in \cite{bal2024complex}. The limits are described by second-order moments, which we now recall.

In the {\em kinetic regime} where $\eta=1$, we define the second-order moment $M_{1,1}(z,r,x,y)$ as
\begin{equation}\label{eqn:M_11_kinetic}
    M_{1,1}(z,r,x,y)=\begin{cases}
        |u_0(r)|^2\mQ(y-x,0),\quad&\beta>1\\
        \dint\limits_{\mathbb{R}^d}e^{i\xi\cdot(r-r')}|u_0(r')|^2\mQ(y-x,\xi)\frac{\mathrm{d}\xi\mathrm{d}r'}{(2\pi)^d},\quad&\beta=1\,,
    \end{cases}
\end{equation}
where
\begin{equation}
    \mQ(\tau,\zeta)=\exp\Big(\frac{z}{\eta^2}\int\limits_{0}^1[R\big(\tau+2sz\zeta\big)-R(0)]\mathrm{d}s\Big)\,.
\end{equation}

In the {\em diffusive regime}, characterized by $\eta=(\log|\log\eps|)^{-1}$, we define $M_{1,1}(z,r,x,y)$ as
\begin{equation}\label{eqn:M_11_diffusive}
    M_{1,1}(z,r,x,y)=\begin{cases}
        |u_0(r)|^2e^{\frac{z}{2}(y-x)^\tps\Xi(y-x)},\quad &\beta>1\\[2mm]
        e^{\frac{1}{8}z(y-x)^\tps\Xi(y-x)}e^{-\frac{3i}{4z}(y-x)\cdot r}G(z^3,r)\ast[e^{\frac{3i}{4z}(y-x)\cdot r}|u_0(r)|^2],\quad&\beta=1\,,
    \end{cases}
\end{equation}
where $\Xi=\nabla^2R(0)$ (is negative definite) and $G(t,r)$ is the Green's function to the evolution equation
\begin{equation}\label{eq:Gdiff}
    \partial_tG+\frac{2}{3}\nabla_r\cdot(\Xi\nabla_r)G(t,r)=0,\quad G(0,r)=\delta_0(r)\,.
\end{equation}

We are now ready to state our main theorems. We assume \eqref{eq:csteps} throughout and recall that $\Phi^\theta$ is defined in \eqref{eq:Phitheta}.
\begin{theorem}\label{thm:complex_Gauss_kinetic}
In the kinetic regime $\eta=1$, $\Phi^\theta(z,r,X)\Rightarrow\Phi(z,r,X)$ in distribution as $\theta\to 0$ where $\Phi(z,r,X)$ is a random vector with elements $\{\phi_j\}_{j=1}^N$ satisfying $\mathbb{E}\phi_j=M_{1,0}(z,r,x_j)$ and ${\color{black}\mathcal{Z}}=\Phi-\mathbb{E}\Phi$ a mean zero circular complex Gaussian random vector with elements $\{{\color{black}\mathcal{Z}}_j\}_{j=1}^N$ satisfying
\begin{equation}
    \mathbb{E}[{\color{black}\mathcal{Z}}_j{\color{black}\mathcal{Z}}^\ast_l]=M_{1,1}(z,r,x_j,x_l)-M_{1,0}(z,r,x_j)M_{0,1}(z,r,x_l),\quad \mathbb{E}[{\color{black}\mathcal{Z}}_j{\color{black}\mathcal{Z}}_l]=0\,.
\end{equation}    
Here, $M_{0,1}(z,r,x)=M_{1,0}^\ast(z,r,x)=e^{-\frac{R(0)z}{2}}u_0(r)$ and $M_{1,1}(z,r,x,y)$ is given by~\eqref{eqn:M_11_kinetic}.
\end{theorem}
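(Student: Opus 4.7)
The strategy is the method of moments. A complex circular Gaussian random vector with the prescribed mean $M_{1,0}$ and the prescribed covariance structure of $\mathcal{Z}=\Phi-\mathbb{E}\Phi$ has all its joint moments uniquely determined by Wick's (Isserlis') theorem applied to the centered vector, combined with polynomial dependence on $M_{1,0}$ for the non-centered part. To obtain $\Phi^\theta\Rightarrow\Phi$ in distribution, it therefore suffices to (i) establish uniform-in-$\theta$ bounds on the joint moments of $\Phi^\theta$ (which provide tightness and moment-determinacy), and (ii) show that each joint moment of the components of $\Phi^\theta$, which by \eqref{eq:phitheta} is a specific value of $\mu_{p,q}^\theta$ at points of the form $\eps^{-\beta}r+\eta x_j$, converges as $\theta\to 0$ to the corresponding Wick-contracted expression built from $M_{1,0}$ and $M_{1,1}$.

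For step (ii), I would compare $\mu_{p,q}^\theta$ with the corresponding moment $\mu_{p,q}^\eps$ of the It\^o-Schr\"odinger solution of \eqref{eq:uSDEd}. The main result of \cite{bal2024complex} already establishes that $\mu_{p,q}^\eps$, evaluated at the kinetic-regime scaling points, converges to the desired circular Gaussian Wick expression with covariance $M_{1,1}$ given by \eqref{eqn:M_11_kinetic}. By the triangle inequality, the whole problem reduces to proving
\begin{equation*}
\bigl|\mu_{p,q}^\theta(z,X,Y) - \mu_{p,q}^\eps(z,X,Y)\bigr| \longrightarrow 0 \quad \text{as } \theta\to 0,
\end{equation*}
uniformly in $(X,Y)$ on compact sets. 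Following \cite{bal2024complex}, it is convenient to carry out this comparison in the Fourier variable and in the total variation topology on $\mathcal{M}_B(\mathbb{R}^{(p+q)d})$, which then translates into uniform estimates in the physical domain.

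The natural tool is a Duhamel expansion for the paraxial equation \eqref{eqn:u_theta} in the Fourier variable: writing $\hat u^\theta(z,\xi)=\sum_{n\ge 0}\hat u^\theta_n(z,\xi)$ where $\hat u^\theta_n$ contains exactly $n$ copies of the random potential $\hat\nu^\theta$, the expectation $\hat\mu_{p,q}^\theta$ becomes a sum over $(p+q)$-fold products of such expansions. Because $\nu$ is Gaussian, Wick's theorem collapses each expectation to a sum over perfect pairings of the $\hat\nu^\theta$ factors, each pairing producing a covariance kernel $\hat{\mathfrak{C}}^\theta$ weighted by oscillatory phases from the free paraxial propagator $e^{-i(\eta/\eps)|\xi|^2 z}$. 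The rapid decorrelation of $\mathfrak{C}^\theta$ in $z$ at the microscopic scale $\eps\theta/\eta$ localises the dominant contributions to near-diagonal pairings, which reproduce exactly the $R\,\mathrm{d}z$-covariance coming from It\^o calculus for \eqref{eq:uSDEd}. The non-diagonal pairings are remainders that should decay as a positive power of $\theta$ by virtue of the $\aver{s}$ integrability in \eqref{eqn:C_n_assump}, together with the uniform $\sL^1$-bounds on $\langle k\rangle^n \hat u_0^\theta$.

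The principal obstacle is quantitative: the Duhamel series must converge uniformly in $\theta$, with remainders of the form $C_{p,q}\,\theta^\alpha$ for some $\alpha>0$ and with $C_{p,q}$ growing only mildly in the combinatorial pairing count. This forces a careful combination of the TV-type operator bounds already developed in \cite{bal2024complex} for the moment equations, with a new quantitative accounting of the $z$-oscillations produced by the correlated potential $\nu^\theta$ (absent in the It\^o-Schr\"odinger setting). Once such estimates are in place, convergence of all joint moments together with the uniform moment bounds yields tightness and then convergence in distribution by the method of moments.
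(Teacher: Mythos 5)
Your overall reduction is the same as the paper's: compare the paraxial moments with the It\^o--Schr\"odinger moments in the Fourier variable and in the total-variation topology, invoke \cite{bal2024complex} for the Gaussian limit of the latter, and conclude for the finite-dimensional vector $\Phi^\theta(z,r,X)$ by convergence of all joint moments (the limit being Gaussian, this suffices; the paper treats the process-level statement separately via Theorem \ref{thm:tightness}). Where you diverge is in how the comparison is organized. You propose expanding each factor $\hat u^\theta$ in its own Duhamel series, multiplying, applying Wick, and then splitting pairings into near-diagonal (ladder) terms that reproduce the It\^o covariance and off-diagonal remainders of size $\theta^\alpha$ --- a full-series diagrammatic comparison in the spirit of \cite{bal2011asymptotics}. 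The paper writes a single Duhamel expansion directly for the product $\psi^\theta_{p,q}$ (and remarks explicitly that your product-of-expansions variant is possible but less convenient), and, more importantly, it does not compare series term by term: it iterates the exact evolution \eqref{eqn:psi_theta} only twice, which yields the It\^o--Schr\"odinger moment equation exactly (the term $\mathcal{B}^\theta_{p,q}$ of \eqref{eqn:B_theta}) plus three explicit remainders \eqref{eqn:R123} --- a decorrelation term $\mathbb{E}[L^\theta L^\theta\psi^\theta]-\mathbb{E}[L^\theta L^\theta]\mathbb{E}\psi^\theta$, a phase-mismatch term, and a far-correlation tail --- each bounded in weighted TV in Lemmas \ref{lemma:R1_theta_bound}--\ref{lemma:R3_theta_bound}. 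The chaos expansion enters only through a priori bounds (Lemma \ref{lem:E_psi_theta_bounds}) and the first remainder, where symmetrization of the simplex supplies the $1/N!$ that tames exactly the combinatorial growth you flag as the principal obstacle. This organization buys a much lighter diagrammatic burden than a full ladder/non-ladder resummation.

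Two caveats on your sketch. First, the decisive quantitative step --- that the non-It\^o contributions are $O(\theta^\alpha)$ uniformly over the expansion with mildly growing constants --- is asserted, not proved; it is precisely the main new content of the paper (Theorem \ref{thm:psi_theta_decomp}), so as written your argument is a program rather than a proof. Second, your claim that the near-diagonal pairings reproduce ``exactly'' the $R\,\mathrm{d}z$ covariance of \eqref{eq:uSDEd} is not literally correct: even the time-ordered pairings carry oscillatory phases evaluated at two distinct times $s\neq s'$, and the resulting mismatch does not vanish but contributes an $O(\theta/\eta^2)$ error (the paper's $\mathscr{R}^\theta_2$, whose control costs an extra power of $\langle v_m\rangle$ on the initial data, whence the weight $\langle v_m\rangle^2$ in \eqref{eqn:R_theta_bound}). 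You gesture at this with the ``accounting of the $z$-oscillations,'' but it must be isolated as a separate error term rather than absorbed into the It\^o structure; otherwise the identification of the limiting covariance $M_{1,1}$ in \eqref{eqn:M_11_kinetic} is not justified.
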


\begin{theorem}\label{thm:complex_Gauss_diff}
In the diffusive regime $\eta=(\log|\log\eps|)^{-1}$, $\Phi^\theta(z,r,X)\Rightarrow\Phi(z,r,X)$ in distribution as $\theta\to 0$ where $\Phi(z,r,X)$ is a mean zero circular complex Gaussian random vector with elements $\{\phi_j\}_{j=1}^N$ satisfying 
\begin{equation}
    \mathbb{E}[\phi_j\phi_l^\ast]=M_{1,1}(z,r,x_j,x_l),\quad   \mathbb{E}[\phi_j\phi_l]=0\,,
\end{equation}    
with $M_{1,1}(z,r,x,y)$ given by~\eqref{eqn:M_11_diffusive}.
\end{theorem}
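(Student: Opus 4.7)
The plan is to reduce Theorem~\ref{thm:complex_Gauss_diff} to two ingredients: convergence of all finite-dimensional joint moments of $\Phi^\theta(z,r,X)$ to the Wick-paired products of the target covariance $M_{1,1}$ from \eqref{eqn:M_11_diffusive}, and tightness of the family $\{\Phi^\theta\}$, supplied by the stochastic continuity of Theorem~\ref{thm:tightness}. Since the prospective limit is a mean-zero circular complex Gaussian vector, its law is fully determined by this covariance through Wick's theorem, so convergence of all joint moments identifies the limit uniquely. Moment convergence itself splits via the triangle inequality into the comparison of paraxial moments $\mu^\theta_{p,q}$ with It\^o-Schr\"odinger moments $\mu^\eps_{p,q}$, and the convergence of $\mu^\eps_{p,q}$ to the Gaussian moments in the diffusive regime; the latter is already established in~\cite{bal2024complex}. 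All of the new work is therefore in the first piece.

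The paraxial to It\^o-Schr\"odinger comparison I would carry out in the Fourier domain, where \eqref{eqn:u_theta} becomes an integral equation driven by the random measure $\hat\nu^\theta$. A Duhamel expansion of $\hat u^\theta$ to a suitable order produces polynomial expressions in the Gaussian field $\nu^\theta$; applying Isserlis/Wick to each resulting $(p+q)$-fold expectation $\mu^\theta_{p,q}$ reduces it to a sum over pairings, each pairing contributing an integrated power of the rescaled covariance $\mathfrak{C}^\theta$ from \eqref{eqn:C_def}. Pairings of consecutive Duhamel factors on the same leg, after the fast-$z$ change of variable in \eqref{eqn:nu_theta_def}, reproduce the drift $-R(0)/(8\eta^2)$ and the quadratic-variation term of \eqref{eq:uSDEd}; all other pairings carry positive powers of the small parameter $\eps\theta/\eta$ per non-adjacent contraction, which become an overall factor $\theta^\alpha$ (for some $\alpha>0$) once the fast-scale integrals are taken. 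Bounding these diagrams in the total variation norm using the regularity hypothesis~\eqref{eqn:C_n_assump} should yield an estimate of the form $\|\hat\mu^\theta_{p,q} - \hat\mu^\eps_{p,q}\| \le \theta^\alpha F(p+q,\eta)$.

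The main obstacle is that in the diffusive regime the It\^o-Schr\"odinger moments, and hence the growth rate $F(p+q,\eta)$, scale like $e^{C/\eta^2}$, so the Duhamel remainder is small only if $\theta^\alpha e^{C/\eta^2}\to 0$. This is precisely the purpose of the scale separation \eqref{eq:csteps}: the choice $\eta=(\log|\log\eps|)^{-1}$ together with $\theta\le\eps^\gamma$ makes $\theta^\alpha e^{C/\eta^2}$ asymptotically negligible for every $\alpha>0$. The delicate part will be the careful bookkeeping of the $\theta$ powers along the graph expansion, confirming that at least one positive power survives after summing over all $(p+q)$-moment diagrams and combining with the combinatorial factors. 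Once the Fourier-side TV estimate is in hand, the corresponding uniform control of physical-domain moments of $\Phi^\theta$ follows from the definitions \eqref{eq:phitheta}--\eqref{eq:Phitheta}; combining this with the diffusive-regime Gaussian limit of~\cite{bal2024complex} and the tightness of Theorem~\ref{thm:tightness} concludes the convergence in distribution.
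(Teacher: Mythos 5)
Your proposal follows essentially the same route as the paper: the core step in both is a Duhamel/Wick expansion of the Fourier-domain moments showing that the paraxial moments agree with the It\^o--Schr\"odinger moments up to a total-variation error of size $\theta^{\alpha}e^{C/\eta^2}$, which the scale separation \eqref{eq:csteps} renders negligible, after which the diffusive-regime Gaussian limit of \cite{bal2024complex} and the method of moments (the circular Gaussian limit being moment-determinate) conclude. The paper merely organizes the comparison differently --- it shows $\mathbb{E}\psi^\theta$ satisfies the closed It\^o--Schr\"odinger moment equation up to three explicit remainders (covariance of higher chaos terms, phase mismatch, decorrelation tail; Lemmas \ref{lemma:R1_theta_bound}--\ref{lemma:R3_theta_bound}, with the dominant $\theta/\eta^2$ error coming from the phase mismatch rather than from crossing pairings) instead of comparing the two diagram series term by term --- which is bookkeeping rather than a different method.
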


As in \cite{bal2024complex}, the results in Theorems \ref{thm:complex_Gauss_kinetic} and \ref{thm:complex_Gauss_diff} at $(z,r,X)$ fixed characterize finite dimensional distributions of the process $x\to\phi^\theta(z,r,x)$ defined in \eqref{eq:phitheta} at $(z,r)$ fixed. The proof of convergence of the whole process follows from a tightness and stochastic continuity result of independent interest, which we now state.
\begin{theorem}\label{thm:tightness}
    \begin{equation}
        \sup\limits_{0\le s\le z}\mathbb{E}|\phi^\theta(s,r,x+h)-\phi^\theta(s,r,x)|^{2n}\le C(n,\alpha,z){\color{black}\|h\|_2}^{2n\alpha}
    \end{equation}
for $\|h\|_2<1$ uniformly in $\theta\in (0,\frac12]$. Here, {\color{black} $\alpha$ is any scalar such that} $\alpha\in(0,1]$ in the kinetic regime and $\alpha\in(0,1)$ in the diffusive regime. Let $n$ be such that $2n\alpha\ge d+2n\alpha_{-}$,  $\alpha_{-}\in(0,\alpha)$. Then the process $\phi^{\theta}(z,r)$ is tight on $C^{0,\alpha_{-}}(\mathbb{R}^d)$  and there exists a  H\"{o}lder continuous version of $\phi^\theta(z,r)$ on $C^{0,\alpha_{-}}(\mathbb{R}^d)$. 
\end{theorem}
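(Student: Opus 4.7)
The plan is to reduce the increment $\phi^\theta(s,r,x+h)-\phi^\theta(s,r,x)$ to a Fourier integral against the $2n$-th moment $\hat\mu^\theta_{n,n}$, extract from it a weighted total-variation bound uniform in $\theta$ using the regularity estimates already proved earlier in the paper, and then invoke the Kolmogorov--Chentsov criterion. Starting from \eqref{eq:phitheta} and partial Fourier inversion,
\[
\phi^\theta(s,r,x+h)-\phi^\theta(s,r,x)=\int_{\R^d}\hat u^\theta(s,\xi)\,e^{i\xi\cdot(\eps^{-\beta}r+\eta x)}\bigl(e^{i\eta\xi\cdot h}-1\bigr)\frac{\rd\xi}{(2\pi)^d}.
\]
Raising to the $2n$-th absolute power (that is $n$ copies of the increment times $n$ conjugates), taking expectation, and unfolding via the definition of $\hat\mu^\theta_{n,n}$ in variables $v=(\xi_1,\ldots,\xi_n,\zeta_1,\ldots,\zeta_n)$ yields
\[
\E\bigl|\phi^\theta(s,r,x+h)-\phi^\theta(s,r,x)\bigr|^{2n}\le \int |\hat\mu^\theta_{n,n}(s,v)|\prod_{j=1}^n\!|e^{i\eta\xi_j\cdot h}-1|\prod_{l=1}^n\!|e^{-i\eta\zeta_l\cdot h}-1|\,\rd v.
\]

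Using the elementary inequality $|e^{i\eta\xi\cdot h}-1|\le C_\alpha\,\eta^\alpha\|h\|_2^\alpha\aver{\xi}^\alpha$, valid for any $\alpha\in[0,1]$, the right-hand side is controlled by
\[
C_\alpha\,\eta^{2n\alpha}\,\|h\|_2^{2n\alpha}\int|\hat\mu^\theta_{n,n}(s,v)|\prod_{j=1}^n\aver{\xi_j}^\alpha\prod_{l=1}^n\aver{\zeta_l}^\alpha\,\rd v.
\]
The remaining weighted integral is precisely what the regularity statements highlighted just after \eqref{eqn:C_n_assump}, namely Lemmas \ref{lemma:R1_theta_bound} and \ref{lem:E_psi_theta_bounds}, provide; the hypothesis $d_0\ge d$ on the covariance furnishes the extra decay in $k$ needed to absorb the $\aver{\xi_j}^\alpha,\aver{\zeta_l}^\alpha$ weights. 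In the kinetic regime ($\eta=1$) these estimates hold uniformly in $\theta$ for all $\alpha\in[0,1]$, including the endpoint. In the diffusive regime, however, the weighted norm carries unavoidable growth in $\eta^{-1}$ and, from the Duhamel expansion used to relate $u^\theta$ to the It\^o--Schr\"odinger solution $u^\eps$, exponential factors of the type $e^{C/\eta^2}$; the gain $\eta^{2n\alpha}$ together with the scale separation \eqref{eq:csteps} absorbs this growth only when $\alpha<1$ strictly, which accounts for the exclusion of the endpoint there.

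Once the estimate $\sup_{0\le s\le z}\E|\phi^\theta(s,r,x+h)-\phi^\theta(s,r,x)|^{2n}\le C(n,\alpha,z)\|h\|_2^{2n\alpha}$ is in hand uniformly in $\theta\in(0,\tfrac12]$, the tightness and the H\"older continuous version are obtained by the Kolmogorov--Chentsov criterion applied to the spatial process $x\mapsto\phi^\theta(z,r,x)$: the assumption $2n\alpha\ge d+2n\alpha_-$ with $\alpha_-\in(0,\alpha)$ is exactly the Kolmogorov exponent condition yielding an $\alpha_-$-H\"older modification on $\R^d$ and tightness in $C^{0,\alpha_-}_{\mathrm{loc}}(\R^d)$. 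The main technical obstacle is the uniform (in $\theta$ and in $s\in[0,z]$) weighted total-variation control on $\hat\mu^\theta_{n,n}$; in the diffusive regime this control is delicate because it hinges on the cancellation between the prefactor $\eta^{2n\alpha}$ and the exponential growth $e^{C/\eta^2}$ coming from the moment bounds, a cancellation that is guaranteed only thanks to the scaling \eqref{eq:csteps} and only for $\alpha$ strictly less than one.
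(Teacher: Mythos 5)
There is a genuine gap at the heart of your estimate in the diffusive regime. After applying $|e^{i\eta\xi\cdot h}-1|\le C_\alpha\eta^\alpha\|h\|_2^\alpha\langle\xi\rangle^\alpha$ you bound the \emph{full} moment by $\eta^{2n\alpha}\|h\|_2^{2n\alpha}$ times a weighted total-variation norm of $\hat\mu^\theta_{n,n}$, and the only tool available for that quantity is Lemma \ref{lem:E_psi_theta_bounds} (Lemma \ref{lemma:R1_theta_bound} concerns the remainder $\mathscr{R}^\theta_1$, not the full moment). That lemma necessarily produces a factor $e^{\lambda/\eta^2}$. Your claimed cancellation ``$\eta^{2n\alpha}$ together with \eqref{eq:csteps} absorbs $e^{C/\eta^2}$ for $\alpha<1$'' does not exist: with $\eta=(\log|\log\eps|)^{-1}$ one has $\eta^{2n\alpha}e^{C/\eta^2}\to\infty$ for every fixed $n,\alpha>0$, since a power of $\eta$ cannot beat $e^{C/\eta^2}$, and assumption \eqref{eq:csteps} only guarantees that $\theta^a e^{C/\eta^2}$ stays bounded — but your main term carries no power of $\theta$ at all. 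Consequently your argument gives a bound that blows up as $\theta\to0$ in the diffusive regime, and it also misattributes the exclusion of the endpoint $\alpha=1$: that restriction is not produced by any such balancing.

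The paper's proof avoids exactly this trap by \emph{not} estimating the full moment crudely. It uses Theorem \ref{thm:psi_theta_decomp} to split $\hat\mu^\theta_{n,n}=\Pi^\theta_{n,n}(\mathcal{B}^\theta_{n,n}+\mathscr{R}^\theta_{n,n})$. The $\mathcal{B}^\theta_{n,n}$ piece coincides with the moments of the It\^o--Schr\"odinger field $u^\eps$, so the corresponding contribution is literally $\mathbb{E}|\delta_h\phi^\eps|^{2n}$, which is bounded by $C(n,z)\|h\|_2^{2n\alpha}$ uniformly in $\eps,\eta$ by \cite[Theorem 2.7]{bal2024complex}; it is this inherited estimate (not a $\theta$-level computation) that yields the uniform bound in $\eta$ and the kinetic/diffusive distinction $\alpha\in(0,1]$ versus $\alpha\in(0,1)$. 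Only the remainder $\mathscr{R}^\theta_{n,n}$ is treated crudely, with $|e^{i\eta\xi\cdot h}-1|\le\eta|\xi|\,\|h\|_2$ and the bound \eqref{eqn:R_theta_bound}, which carries the crucial extra factor $\theta$; there the combination $\theta\,e^{\lambda/\eta^2}$ is bounded precisely because of \eqref{eq:csteps}. To repair your proposal you must introduce this decomposition (or an equivalent mechanism producing a factor of $\theta$ in front of the $e^{C/\eta^2}$ growth) rather than estimating $\hat\mu^\theta_{n,n}$ in one stroke; the Kolmogorov--Chentsov step at the end is fine as you state it.
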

All results are proved in Section \ref{sec:proofs}. 

As in \cite[Corollary 2.8]{bal2024complex}, we deduce the following result of convergence in distribution:
\begin{corollary}[Convergence of processes]\label{cor:conv}
    For $z$ and $r$ fixed and under the hypotheses of Theorem \ref{thm:tightness}, the process $\phi^\theta(z,r,\cdot)$ converges in distribution as probability measures on $C^{0,\alpha_-}(\Rm^d)$ to its unique {\color{black}(Gaussian) limit. This} is characterized by limits of finite dimensional distributions shown to converge in Theorems \ref{thm:complex_Gauss_kinetic} and \ref{thm:complex_Gauss_diff} in the kinetic and diffusive regimes. 
\end{corollary}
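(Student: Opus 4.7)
The plan is to combine the tightness from Theorem~\ref{thm:tightness} with the finite-dimensional convergence of Theorems~\ref{thm:complex_Gauss_kinetic} and \ref{thm:complex_Gauss_diff} through Prokhorov's theorem, paralleling the argument of Corollary~2.8 in \cite{bal2024complex}. First I would observe that Theorem~\ref{thm:tightness} gives tightness of the family $\{\phi^\theta(z,r,\cdot)\}_{\theta\in(0,\frac12]}$ as a family of random elements of $C^{0,\alpha_-}(\mathbb{R}^d)$. By Prokhorov's theorem, any sequence $\theta_k\to 0$ then admits a further subsequence $\theta_{k_j}$ along which $\phi^{\theta_{k_j}}(z,r,\cdot)$ converges in distribution to some $C^{0,\alpha_-}(\mathbb{R}^d)$-valued random element $\phi^\ast$.

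Next I would identify the law of such a subsequential limit. For every finite collection $X=(x_1,\dots,x_N)\in\mathbb{R}^{Nd}$, the evaluation map $f\mapsto(f(x_1),\dots,f(x_N))$ is continuous on $C^{0,\alpha_-}(\mathbb{R}^d)$, so the continuous mapping theorem yields that $(\phi^\ast(x_1),\dots,\phi^\ast(x_N))$ is the weak limit of $\Phi^{\theta_{k_j}}(z,r,X)$. Theorems~\ref{thm:complex_Gauss_kinetic} and \ref{thm:complex_Gauss_diff} already pin down this weak limit as the Gaussian vector $\Phi(z,r,X)$ with mean $M_{1,0}(z,r,\cdot)$ and covariance given by $M_{1,1}(z,r,\cdot,\cdot)$, in the kinetic and diffusive regimes respectively. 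Hence every subsequential limit $\phi^\ast$ has the finite-dimensional distributions of the unique Gaussian process $\phi(z,r,\cdot)$ whose law is prescribed by those two theorems.

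Finally, since finite-dimensional distributions uniquely determine a Borel probability measure on $C^{0,\alpha_-}(\mathbb{R}^d)$, all subsequential weak limits coincide with the law of $\phi$, so the whole family must converge in distribution to $\phi$. The main subtlety I would flag is the functional-analytic setup: in the unweighted Hölder norm, $C^{0,\alpha_-}(\mathbb{R}^d)$ is not separable, and Theorem~\ref{thm:tightness} must be interpreted in a Polish topology on Hölder continuous functions on $\mathbb{R}^d$---for instance, the one generated by the seminorms $\|\cdot\|_{C^{0,\alpha_-}(B_n)}$ on balls $B_n\uparrow\mathbb{R}^d$, exactly as in \cite{bal2024complex}---so that both Prokhorov's theorem and the characterization of measures by finite-dimensional marginals are available. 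Once this setup is recorded, no estimate beyond Theorems~\ref{thm:complex_Gauss_kinetic}--\ref{thm:tightness} is required, and the corollary follows by the routine Prokhorov plus continuous mapping argument sketched above.
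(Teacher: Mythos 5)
Your argument is correct and is essentially the paper's own: the corollary is deduced, exactly as in Corollary~2.8 of \cite{bal2024complex}, by combining the tightness and stochastic continuity of Theorem~\ref{thm:tightness} with the finite-dimensional convergence of Theorems~\ref{thm:complex_Gauss_kinetic} and \ref{thm:complex_Gauss_diff} via Prokhorov's theorem and identification of subsequential limits through their marginals. Your remark about working with a Polish (local Hölder) topology on $C^{0,\alpha_-}(\Rm^d)$ is a sensible clarification of the same setup used there, not a departure from it.
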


For completeness, we recall the following corollary on the scintillation index. This is \cite[Corollary 2.5]{bal2024complex}, written with $\beta=1$ to simplify the presentation, and to which we refer for derivation and details.
\begin{corollary}[Scintillation in the diffusive regime]\label{cor:scint}
Assume $\beta=1$ and define the intensity
\begin{equation}\label{eqn:I_eps}
    I^\theta(z,r,x)= |u^\theta(z,\eps^{-1} r +\eta x)|^2\,.
\end{equation}
    Then (at fixed $(z,r)$) $I^\theta\boldsymbol{\Rightarrow}I$ in distribution as $\theta\to 0$ where $I(z,r)$ is distributed according to an exponential law with $\mathbb{E}[I](z,r)=(G(z^3,\cdot)*|u_0|^2)(z,r)$, where $G$ is the fundamental solution to the diffusion equation \eqref{eq:Gdiff} and $*$ denotes convolution (in the $r-$variable). In particular, the scintillation index
    \begin{equation}\label{eq:scintindex}
    {\mathsf S}(z,r):=\frac{\mathbb{E}[I^2]-\mathbb{E}[I]^2}{\mathbb{E}[I]^2}=1,\quad\forall z>0, \ r\in\mathbb{R}^d\,.
    \end{equation}
\end{corollary}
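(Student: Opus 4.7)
The plan is to apply Theorem \ref{thm:complex_Gauss_diff} at $N=1$ and then push the resulting Gaussian convergence through the continuous map $\zeta\mapsto|\zeta|^2$. Writing $\phi^\theta(z,r,x)=u^\theta(z,\eps^{-1}r+\eta x)$ in accordance with \eqref{eq:phitheta} at $\beta=1$, we have $I^\theta(z,r,x)=|\phi^\theta(z,r,x)|^2$. By Theorem \ref{thm:complex_Gauss_diff}, the scalar random variable $\phi^\theta(z,r,x)$ converges in distribution to a mean-zero circular complex Gaussian $\mathcal{Z}$ with $\mathbb{E}[|\mathcal{Z}|^2]=M_{1,1}(z,r,x,x)$ and $\mathbb{E}[\mathcal{Z}^2]=0$.

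Next I would evaluate $M_{1,1}$ on the diagonal $y=x$ using the $\beta=1$ formula \eqref{eqn:M_11_diffusive}. All three factors involving $(y-x)$ trivialize at $y=x$: the Gaussian damping $e^{\frac{z}{8}(y-x)^\tps\Xi(y-x)}$ and the two complex phases $e^{\pm\frac{3i}{4z}(y-x)\cdot r}$ reduce to $1$, leaving
\begin{equation*}
M_{1,1}(z,r,x,x) \;=\; \bigl(G(z^3,\cdot)\ast|u_0|^2\bigr)(r)\;=:\;\sigma^2(z,r),
\end{equation*}
independent of $x$ as expected. Writing $\mathcal{Z}=X+iY$ with $X,Y$ independent real Gaussians of variance $\sigma^2/2$ (this is the content of circularity combined with $\mathbb{E}[\mathcal{Z}^2]=0$), the random variable $|\mathcal{Z}|^2=X^2+Y^2$ follows an exponential law with mean $\sigma^2$.

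Since $\zeta\mapsto|\zeta|^2$ is continuous on $\mathbb{C}$, the continuous mapping theorem applied to the convergence $\phi^\theta(z,r,x)\Rightarrow\mathcal{Z}$ yields $I^\theta(z,r,x)\Rightarrow|\mathcal{Z}|^2=I(z,r)$ in distribution, with $\mathbb{E}[I](z,r)=\sigma^2(z,r)=(G(z^3,\cdot)\ast|u_0|^2)(r)$ as claimed. For the scintillation index, convergence in distribution of $I^\theta$ together with the uniform-in-$\theta$ moment bounds underlying Theorems \ref{thm:complex_Gauss_diff} (which control $\mathbb{E}[(I^\theta)^2]$ via $\mu_{2,2}^\theta$) ensures convergence of the first two moments to those of the exponential law; using $\mathbb{E}[I^2]=2\sigma^4$ and $\mathbb{E}[I]^2=\sigma^4$ for an exponential distribution then gives $\mathsf{S}(z,r)=(2\sigma^4-\sigma^4)/\sigma^4=1$.

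The only non-routine point is justifying that the moment convergence underlying the distributional limit is strong enough to pass to ratios of second moments in \eqref{eq:scintindex}; this is immediate, however, because Theorem \ref{thm:complex_Gauss_diff} is itself obtained by proving uniform-in-$\theta$ closeness of $\mu^\theta_{p,q}$ to the moments of the Gaussian limit, so $\mathbb{E}[(I^\theta)^2]=\mu^\theta_{2,2}(z,r,r,x,x,x,x)\to 2\sigma^4$ and $\mathbb{E}[I^\theta]=\mu^\theta_{1,1}\to\sigma^2$ directly, without any further uniform integrability argument. The remainder of the derivation is identical to \cite[Corollary 2.5]{bal2024complex} and no additional ingredient beyond the $\beta=1$ diffusive second moment is required.
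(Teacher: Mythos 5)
Your argument is correct and follows essentially the same route the paper intends: the paper states this corollary without an inline proof, deferring to \cite[Corollary 2.5]{bal2024complex}, and the derivation there is exactly your chain — Theorem \ref{thm:complex_Gauss_diff} with $N=1$, evaluation of \eqref{eqn:M_11_diffusive} on the diagonal $y=x$ to get $\sigma^2=(G(z^3,\cdot)\ast|u_0|^2)(r)$, the continuous mapping theorem for $\zeta\mapsto|\zeta|^2$, and the moments of the exponential law. Your additional remark that $\mathbb{E}[(I^\theta)^2]$ converges via the moment estimates (Theorem \ref{thm:mu_theta_decomp_final}) is accurate but not needed, since the scintillation index in \eqref{eq:scintindex} is computed for the limiting variable $I$.
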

The above corollary fully characterizes the scintillation regime with a scintillation index uniformly equal to $1$ corresponding to a fully developed speckle pattern. The above diffusion with kernel $G(z^3,\cdot)$ is naturally written in the `time' variable $z^3$ and lateral `spatial' variables $r$, resulting in a very anomalous diffusion regime with $r\sim z^{\frac32}$ for large $z$ reflecting similar beam spreading in the paraxial and It\^o-Schr\"odinger regimes.

{
\color{black} We also note the independence of the macroscopic wavefields at different $r$ under the diffusive scaling in the following theorem. \begin{theorem}\label{thm:phi_r_indep}
    For fixed z and $r\neq r'$ in the diffusive regime, the random vectors $\Phi^\theta(z,r,X)$ and $\Phi^\theta(z,r',X')$ become statistically independent as $\theta\to 0$.
\end{theorem}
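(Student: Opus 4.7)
The plan is to recognize the concatenated random vector $\tilde\Phi^\theta=(\Phi^\theta(z,r,X),\Phi^\theta(z,r',X'))$ as the rescaled wavefield $\phi^\theta$ evaluated at an enlarged family of base points, some anchored at macroscopic location $r$ and the remainder at $r'$. The moment-convergence machinery that underlies the proof of Theorem~\ref{thm:complex_Gauss_diff}---Duhamel expansions, the paraxial-to-It\^o--Schr\"odinger approximation established earlier in this paper, and the Gaussian moment analysis of~\cite{bal2024complex}---is insensitive to whether the evaluation points share a common macroscopic location, so it applies to this enlarged family and produces convergence of $\tilde\Phi^\theta$ to a joint complex circular Gaussian vector $\tilde\Phi$. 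Since a complex circular Gaussian splits as a product of two independent complex circular Gaussians exactly when the off-diagonal block of its covariance vanishes, Theorem~\ref{thm:phi_r_indep} reduces to
\begin{equation*}
\lim_{\theta\to 0}\mathbb{E}\bigl[\phi^\theta(z,r,x)\,\overline{\phi^\theta(z,r',y)}\bigr]=\lim_{\theta\to 0}\mu_{1,1}^\theta\bigl(z,\eps^{-1}r+\eta x,\eps^{-1}r'+\eta y\bigr)=0
\end{equation*}
for every $r\neq r'$ and every $x,y\in\mathbb{R}^d$.

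To establish this vanishing I would first invoke the paraxial-to-It\^o--Schr\"odinger approximation to pass from $\mu_{1,1}^\theta$ to the It\^o--Schr\"odinger two-point moment $\mu_{1,1}^\eps$, and then analyze $\mu_{1,1}^\eps$ at macroscopically separated arguments using its closed-form PDE
\begin{equation*}
\partial_z\mu_{1,1}^\eps=\frac{i\eta}{\eps}\bigl(\Delta_X-\Delta_Y\bigr)\mu_{1,1}^\eps+\frac{R(X-Y)-R(0)}{c\eta^2}\mu_{1,1}^\eps,\qquad \mu_{1,1}^\eps(0,X,Y)=u_0^\theta(X)\,\overline{u_0^\theta(Y)},
\end{equation*}
for some constant $c>0$. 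In the center-of-mass and difference coordinates $m=\tfrac12(X+Y)$, $h=Y-X$, the initial difference coordinate is macroscopically large, $|h(0)|\sim\eps^{-1}|r'-r|$. The Schr\"odinger operator is unitary in $\sL^2$ and transports mass only at effective lateral scale $\sqrt{\eta z/\eps}\ll\eps^{-1}$, so $|h|$ remains of order $\eps^{-1}$ throughout $[0,z]$. Since $R\in\sL^1$ is a covariance function with non-negative integrable Fourier transform $\hat R$, Riemann--Lebesgue yields $R(h)\to 0$ as $|h|\to\infty$; hence for fixed $|r'-r|>0$ there exists $\delta=\delta(|r'-r|)>0$ such that $R(0)-R(h)\ge\delta$ on the effective support of the evolution for all small enough $\eps$. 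Combined with the unitarity of the kinetic part, this yields the $\sL^\infty$ estimate
\begin{equation*}
\bigl|\mu_{1,1}^\eps(z,\eps^{-1}r+\eta x,\eps^{-1}r'+\eta y)\bigr|\le C\|u_0^\theta\|_\infty^{2}\,e^{-\delta z/(c\eta^2)},
\end{equation*}
which tends to $0$ faster than any power of $\eps$ since $\eta=(\log|\log\eps|)^{-1}$.

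The main obstacle is the kinetic-mixing step: one must rule out the possibility that the Schr\"odinger propagator returns a non-negligible portion of the mass of $\mu_{1,1}^\eps$ into the small-$|h|$ regime where $R(h)\approx R(0)$ and the damping is lost. I would handle this either by a Fourier-side analysis---where the free propagator acts as a unitary phase $e^{-i\eta z(|\xi|^2-|\zeta|^2)/\eps}$ while the potential contributes a $\hat R$-convolution plus the uniform damping $-R(0)/(c\eta^2)$---or by a physical-side energy estimate on the recentered variable $h+\eps^{-1}(r'-r)$, exploiting that $R$ is arbitrarily small outside any fixed ball since $R\in\sL^1$. Once the off-diagonal second moment is shown to vanish, the Wick structure of the Gaussian joint limit automatically forces every higher mixed moment to factor across the $r$- and $r'$-blocks, giving the asymptotic independence claimed in Theorem~\ref{thm:phi_r_indep}.
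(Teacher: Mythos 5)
Your reduction coincides with the paper's: treat the concatenated family of evaluation points with the same moment machinery (Theorem~\ref{thm:mu_theta_decomp_final} plus the Wick/Gaussian structure from \cite{bal2024complex}), so that asymptotic independence of the two blocks reduces to the vanishing, as $\theta\to0$, of the cross second moment $\mathbb{E}[\phi^\theta(z,r,x)\,\phi^{\theta*}(z,r',y')]$, the paraxial-to-It\^o--Schr\"odinger error being $O(\eps^{1/3}+\theta^{1/2})$ uniformly. Where you differ is in how that vanishing is proved, and this is also where your argument stops short. You set up the second-moment PDE and argue that the damping $R(h)-R(0)\le-\delta$ at $|h|\sim\eps^{-1}|r-r'|$ forces decay like $e^{-\delta z/(c\eta^2)}$, but the step you yourself flag as ``the main obstacle'' --- ruling out that the mixed kinetic term (which in coordinates $m=\tfrac12(X+Y)$, $h=Y-X$ is $-2\nabla_m\cdot\nabla_h$, with no finite propagation speed) returns mass to the region where $R(h)\approx R(0)$ --- is only announced (``Fourier-side analysis or physical-side energy estimate''), not carried out. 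The auxiliary claims you use to motivate it (transport ``only at scale $\sqrt{\eta z/\eps}$'', a uniform $\delta$ ``on the effective support of the evolution'') are heuristic as stated: in the Fourier representation $|\xi-\zeta|$ ranges over all of $\mathbb{R}^d$, so a uniform damping constant does not exist without first splitting frequencies against the Schwartz decay of $\hat u_0$.

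The paper closes precisely this point by quoting the explicit solution of your PDE (Fourier transform in the center of mass, characteristics in $h$, from \cite{bal2024complex}): the cross moment is an integral against $\hat u_0(\xi)\hat u_0^{*}(\zeta)$ of a unimodular phase times $\exp\big(\frac{z}{\eta^2}\int_0^1 Q\big(\frac{r'-r}{\eps}+\eta(y'-x)+2\eta s z(\xi-\zeta)\big)\mathrm{d}s\big)$ with $Q=R-R(0)\le0$. For fixed $(\xi,\zeta)$ the argument of $Q$ has norm of order $\eps^{-1}$, so $Q\to-R(0)<0$ and the exponential tends to $0$ pointwise (the shift $2\eta s z(\xi-\zeta)$ is harmless at fixed frequencies); since the damping factor is bounded by $1$, dominated convergence finishes the proof with no uniform $\delta$ and no effective-support localization. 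So your strategy is salvageable --- the ``Fourier-side analysis'' you mention is exactly this computation --- but as written the decisive estimate is a plan rather than a proof; to complete it, either invoke the explicit second-moment formula as the paper does, or carry out the frequency splitting needed to make your uniform-damping bound rigorous.
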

}

\paragraph{Scaling in applications} We briefly revisit the scaling \eqref{eq:scalings} and its practical relevance{\color{black}, say for laser beam propagation through turbulence}. There, $x$ was measured in terms of the smallest scale of turbulence, let us call it $l_0$, typically taken in practice to be in the $mm$ range. This means that medium fluctuations should be understood as $\nu=\nu\big(\frac{z}{l_0},\frac{x}{l_0}\big)$. With $Z$ a dimension-less parameter characterizing longitudinal distance of propagation, $w_0$ a parameter describing the width of the source, and $\sigma$ a covariance parameter for the random fluctuations $\nu=\sigma \tilde\nu$, we have the scaling and paraxial model after neglecting backscattering:
\[
\frac{z}{l_0Z}\to z,\quad \frac{x}{l_0}\to x,\qquad2i\Big(\frac{k_0l_0}{Z}\Big)\partial_zu+\Delta_xu+k_0^2l_0^2\sigma\tilde{\nu}(zZ,x)u=0,\quad u(0,x)=u_0\Big(\frac{l_0x}{w_0}\Big).
\]
Consider the scaling with $\beta=1$ for concreteness. The parameters $(\theta,\eps,\eta)$ are then defined as
\[
\theta=\frac{1}{k_0l_0},\quad \eps=  \frac{l_0}{w_0},\quad \eta = \frac{\eps Z}{k_0l_0}  = \frac{Z}{k_0w_0}.
\]
We then verify that the above paraxial model gives \eqref{eq:paraxial} when $\sigma^2Z^3 l_0^2=w_0^2$.  This is equivalent to $(\sigma^2l_0^2k_0^2) \eta^2 Z=1$.

{\color{black}A typical wavelength used in practice is $\lambda\approx 0.6\times 10^{-6}m$ and corresponds to $k_0\approx10^7 m^{-1}$. Under weak turbulence assumptions \cite{clifford2005classical, ishimaru1978wave, andrews2001laser}, the refractive index is approximated as $n(\Vec{r})\approx 1+ 79\times 10^{-6}\frac{P(\Vec{r})}{T(\Vec{ r})}$, where $P$ is the pressure in millibars and $T$ the temperature in Kelvin. Assuming the variations in the refractive index are mainly from temperature fluctuations ($T(\Vec{r})=T+\delta T(\Vec{r})$), this corresponds to $\delta n\approx 79\times 10^{-6}\frac{P}{T^2}\delta T$. So for changes in the temperature in the $0.1^\circ-1^\circ$ range, $\sigma=(\mathbb{E}[\delta n^2])^{\frac 12}\approx10^{-7}-10^{-6}$. We also set the fluctuation lengthscale $l_0=2\times 10^{-3}m$ so that $\theta=5\times 10^{-5}$.

We thus observe that we reach the kinetic regime $\eta=1$ for a propagation distance of order $l_0Z=500m$ when $\sigma=10^{-7}$ with $\eps=\frac{\eta k_0l_0}{Z}=0.08\ll1$ when $w_0\ge  \frac{10^{-3}}{\eps}$, which is the case for the beam widths of a (quite a) few millimeters to meters found in applications \cite{nelson2016scintillation, vorontsov2020atmospheric, stotts2021adaptive, andrews2001laser}. 

The diffusive regime is reached for $\eta\ll1$, corresponding to $\sigma\approx5\times 10^{-7}$ and $l_0Z$ in the range $10^3m$ typical in applications. For $l_0Z=10^3m$ so that $\eta=0.1$ and $\eps=10^{-2}$ for concreteness, the proposed theory has a domain of validity for beam width $w_0=10^{-3}/\eps$, i.e., for beams with a diameter in tens of centimeters to meter scale.}


\section{Proofs of the main results} \label{sec:proofs}

The main strategy of the proof of the above results is to show that the exact moment equations available in the It\^o-Schr\"{o}dinger regime are approximately valid in the paraxial regime. Once such a result is established, the characterization of the limits of finite dimensional distributions of $\phi^\theta$ closely follows the derivation in \cite{bal2024complex}. 

\subsection{Phase compensated wave field and preparatory calculations}

As in \cite{bal2011asymptotics,gu2021gaussian} and \cite{bal2024complex}, approximations to the wavefield are best performed in the Fourier domain and after `removing' or `compensating for' the effect of free propagation in a homogeneous medium. 
The phase compensated wave field $\Tilde{u}^\theta(z,\xi)=\hat{u}^\theta(z,\xi)e^{\frac{i\eta z}{\eps}|\xi|^2}$ solves the evolution equation
\begin{equation}\label{eqn:u_tilde}
     \partial_z\tilde{u}^\theta=\frac{i}{(\eta\eps\theta)^{\frac 12}}\int\limits_{\mathbb{R}^d}\tilde{u}^\theta(z,\xi-k)e^{\frac{i\eta z}{\eps}g(\xi,k)}\frac{{\color{black}\hat{\nu}(\frac{\eta z}{\eps\theta},\mathrm{d}k)}}{(2\pi)^d}, \quad \tilde{u}^\theta(0,\xi)=\hat{u}^\theta_0(\xi)\,,
\end{equation}
where $g(\xi,k)=|\xi|^2-|\xi-k|^2=-|k|^2+2k\cdot\xi$. In terms of the phase compensated wave field, the $p+q$th moment $\hat{\mu}^\theta_{p,q}$ is then given by
\[
\hat{\mu}_{p,q}^\theta(z,v)=\Pi_{p,q}^\theta(z,v)\mathbb{E}[\psi^\theta_{p,q}(z,v)],
\]
where 
\[\Pi_{p,q}^\theta(z,v)=e^{-\frac{i\eta z}{\eps}(\sum_{j=1}^p|\xi_j|^2-\sum_{l=1}^q|\zeta_l|^2)},\]
and $\psi^\theta_{p,q}$ is a product of $p+q$ copies of the phase compensated wave field given by
\[\psi^\theta_{p,q}(z,v)=\prod\limits_{j=1}^p\tilde{u}^\theta(z,\xi_j)\prod\limits_{l=1}^q\tilde{u}^{\theta *}(z,\zeta_l).\]
Denote by $\Psi^\theta_{p,q}$ the $p+q$th moment of the phase compensated field:
\begin{equation}\label{eq:Psieps}
\Psi_{p,q}^\theta(z,v) =\mathbb{E}\psi_{p,q}^\theta(z,v).
\end{equation}
We drop the $p,q$ dependence on $\psi$ for notational convenience. From~\eqref{eqn:u_tilde}, we have
\begin{equation}\label{eqn:psi_theta}
    \partial_z\psi^\theta=\frac{i}{(\eta\eps\theta)^{\frac 12}}L^\theta(z,\nu^\theta_z)\psi^\theta,\quad \psi^\theta(0,v)=\prod\limits_{j=1}^p\hat{u}^\theta_0(\xi_j)\prod\limits_{l=1}^q\hat{u}^{\theta *}_0(\zeta_l)\,,
\end{equation}
where we recall that $\nu^\theta_z(\cdot)=\nu\big(\frac{\eta z}{\eps\theta},\cdot\big)$ and the operator $L^\theta$ is given by
\begin{equation}\label{eqn:L_theta}
L^\theta(s,\nu_z)\psi=\int\limits_{\mathbb{R}^d}\Big[\sum\limits_{j=1}^pe^{\frac{i\eta s}{\eps}g(\xi_j,k)}\psi(\xi_j-k)-\sum\limits_{l=1}^qe^{-\frac{i\eta s}{\eps}g(\zeta_l,-k)}\psi(\zeta_l+k)\Big]\frac{{\color{black}\hat{\nu}(z,\mathrm{d}k)}}{(2\pi)^d}\,,
\end{equation}
where we explicitly indicate only the variables in $\psi$ that are shifted by $\pm k$.  
\begin{lemma}\label{lemma:chaos}
    $\psi^\theta$ can be expanded as
    \begin{equation}\label{eqn:chaos_series}
    \begin{aligned}
        \psi^\theta(z,v)=\sum\limits_{n\ge 0}\frac{{\color{black}1}}{(\eta\eps\theta)^{\frac{n}{2}}}\psi^\theta_n(z,v)\,,
    \end{aligned}
\end{equation}
where $\psi^\theta_0(z,v)=\psi^\theta(0,v)$ and for $n\ge 1$,
\[ \psi^\theta_n(z,v)=\int\limits_{[0,z]^n_<}\Big(\prod\limits_{j=1}^nL^\theta(s_j,\nu^\theta_{s_j})\Big)\psi^\theta(0,v)\prod\limits_{j=1}^n\mathrm{d}s_j,\]
where $[0,z]^n_<$ denotes the simplex $0\le s_1\le\cdots\le s_n\le z$.  Moreover, 
\begin{equation}\label{eqn:chaos_2N}
    \begin{aligned}
        \mathbb{E}\psi^\theta_{2N}(z,v)=\int\limits_{[0,z]^{2N}_<}\int\limits_{\mathbb{R}^{Nd}}\sum\limits_{P^{2N}}\sum\limits_{j=1}^{(p+q)^{2N}}e^{\frac{i\eta}{\eps}G'_j(\Vec{s},\Vec{k},v)}\psi^\theta(0,v-A'_j\Vec{k})\prod\limits_{(j',l')\in P^{2N}}\hat{\mathfrak{C}}^\theta(s_{j'}-s_{l'},k_{j'})\frac{\mathrm{d}k_{j'}}{(2\pi)^d}\prod\limits_{j=1}^{2N}\mathrm{d}s_j\,.
    \end{aligned}
\end{equation}
Here, $A_j'$ is a $(p+q)d\times Nd$ block diagonal matrix with uniformly bounded entries and $G_j'$ is real valued $\forall j$. {\color{black}$P^{2N}$ denotes a permutation of pairings of $2N$ indices.}
\begin{proof}
Using \eqref{eqn:psi_theta} and a standard Duhamel expansion, we arrive at~\eqref{eqn:chaos_series}. Defining these iterated integrals and the convergence in \eqref{eqn:chaos_series} in a square integrable sense, can be justified as in, e.g., the proof of \cite[Proposition 2.1]{bal2011asymptotics}. Using the definition of $L^\theta$, this can be further expanded as
\begin{equation}\label{eqn:chaos_series2}
    \begin{aligned}        
    \psi^\theta_n(z,v) =\int\limits_{[0,z]^n_<}\int\limits_{\mathbb{R}^{nd}}\sum\limits_{j=1}^{(p+q)^n}e^{\frac{i\eta}{\eps}G_j(\Vec{s},\Vec{k},v)}\psi^\theta(0,v-A_j\Vec{k})\prod\limits_{j=1}^n{\color{black}\frac{\hat{\nu}^\theta_{s_j}(\mathrm{d}k_j)}{(2\pi)^d}\mathrm{d}s_j}\,,
    \end{aligned}
\end{equation}
where $\Vec{s}=(s_1,\cdots,s_n)$ and $\Vec{k}=(k_1,\cdots,k_n)$. Here, $A_j$ are matrices and $G_j$ are real valued functions which we now define. 

 $A_j$ are $(p+q)d\times nd$ block matrices with $d\times d$ blocks such that $A_j$ has exactly one non zero entry per column block. This non zero entry equals 1 when it appears in the first $pd$ rows and equals $-1$ otherwise. $G_j$ can be defined recursively as follows:

Let $\epsilon_m=1$ if $m=1,\cdots,p$ and equal to $-1$ if $m=p+1,\cdots,p+q$. Also, let
\begin{equation*}
G_{1,m}(\Vec{s},\Vec{k},v)=s_1\epsilon_mg(v_m,\epsilon_mk_1) {\color{black}+ \epsilon_m\frac{\pi}{2}\frac{\eps}{\eta}}\,.    
\end{equation*}
Now, for $r\ge 1$, let $G_{r+1,j}(\Vec{s},\Vec{k},v)$ be given by
\begin{equation*}
\begin{aligned}
G_{r+1,(p+q)(m'-1)+m}(\Vec{s},\Vec{k},v)&= s_{r+1}\epsilon_mg(v_m,\epsilon_m k_{r+1}){\color{black}+ \epsilon_m\frac{\pi}{2}\frac{\eps}{\eta}}\\
&+G_{r,m'}(s_1,\cdots,s_r,k_1,\cdots,k_r,v_1,\cdots,v_m-\epsilon_mk_{r+1},v_{m+1},\cdots,v_{p+q})\,,    
\end{aligned}
\end{equation*}
where $m'=1,\cdots,(p+q)^r $ and $m=1,\cdots,p+q$. Here, for $r$ fixed, $j$ varies from $1$ to $(p+q)^r$. Finally set $G_j$ to be $G_{n,j}$.

From the Gaussian assumption on $\nu$, chaos with odd $n$ vanish after taking expectation. Applying Wick's theorem~\cite{janson1997gaussian}, we have 
\begin{equation*}
    \begin{aligned}
        \mathbb{E}\psi^\theta_{2N}(z,v)=\int\limits_{[0,z]^{2N}_<}\int\limits_{\mathbb{R}^{2Nd}}\sum\limits_{P^{2N}}\prod\limits_{(j',l')\in P^{2N}}(2\pi)^d\hat{\mathfrak{C}}^\theta(s_{j'}-s_{l'},k_{j'})\delta(k_{j'}+k_{l'})
        \\ \times \sum\limits_{j=1}^{(p+q)^{2N}}e^{\frac{i\eta}{\eps}G_j(\Vec{s},\Vec{k},v)}\psi^\theta(0,v-A_j\Vec{k})\prod\limits_{j=1}^{2N}\frac{\mathrm{d}k_j\mathrm{d}s_j}{(2\pi)^d}\,.
    \end{aligned}
\end{equation*}
The Dirac measure reduces integrals over $\Vec{k}\in\mathbb{R}^{2Nd}$ to an integral over $\mathbb{R}^{Nd}$. This gives~\eqref{eqn:chaos_2N}.

In~\eqref{eqn:chaos_2N}, $A_j'$ are $(p+q)d\times Nd$ block diagonal matrices of $d\times d$ blocks with either no non zero entries or exactly two non zero entries (either 1 or -1) per column block. Also, for a fixed arrangement $P^{2N}$ with $(l,m)\in P^{2N}$, $G_j'(\Vec{s},\Vec{k},v)=G_j(\Vec{s},B_{P^{2N}}(\Vec{k}),v)$ where $B_{P^{2N}}(\Vec{k})$ is a block matrix of size $2Nd\times 1$ with  $k_1,\cdots,k_N$ appearing as $l$th entries and $-k_1,-k_2,\cdots,-k_N$ appearing as corresponding $m$th entries. For example, when $N=2$, there are three possibilities for $P^4$: if $P^{4}=\{(1,2), (3,4)\}$, $B_{P^4}(\Vec{k})=(k_1,-k_1,k_2,-k_2)^{\top}$ and if $P^4=\{(1,3),(2,4)\}$, $B_{P^4}(\Vec{k})=(k_1,k_2,-k_1,-k_2)^\top$ and finally if $P^4=\{(1,4),(2,3)\}$, $B_{P^4}(\Vec{k})=(k_1,k_2,-k_2,-k_1)^\top$. 
\end{proof}
\end{lemma}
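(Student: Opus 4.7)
My plan is to derive the expansion in two stages: first, a deterministic Duhamel/Volterra iteration of \eqref{eqn:psi_theta} that peels off one application of the first-order operator $L^\theta$ per step; second, an evaluation of the Gaussian expectation via Isserlis' (Wick's) formula, which kills the odd-chaos terms and converts each surviving $2N$-fold stochastic integral into a sum over pairings of $N$-fold deterministic integrals.

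For the first stage, I would rewrite \eqref{eqn:psi_theta} in the integral form
\begin{equation*}
\psi^\theta(z,v) = \psi^\theta(0,v) + \frac{i}{(\eta\eps\theta)^{1/2}}\int_0^z L^\theta(s,\nu_s^\theta)\psi^\theta(s,v)\,\mathrm{d}s,
\end{equation*}
and iterate, obtaining $n$-fold ordered simplex integrals of $n$ copies of $L^\theta$ applied to $\psi^\theta(0,v)$. Expanding each $L^\theta$ using \eqref{eqn:L_theta} produces a branching tree of depth $n$ and arity $p+q$, indexed by $j\in\{1,\ldots,(p+q)^n\}$: at each step $r$ one selects which component $v_m$ of $v$ is shifted by $\pm k_r$ (with sign $+1$ for $m\le p$ and $-1$ for $m>p$), and picks up an oscillatory factor of the form $e^{i\eta s_r\epsilon_m g(v_m,\epsilon_m k_r)/\eps}$. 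The cumulative shift defines a linear map $\vec k\mapsto A_j\vec k$ whose block structure (exactly one $\pm 1$ entry per $d\times d$ column block, the sign fixed by which side of the product $\psi^\theta$ was acted on) is read off the branch. The cumulative phase $G_j$ is real since $g(\xi,k)=-|k|^2+2k\cdot\xi$ is real, and obeys an obvious one-step recursion in $r$. Square-integrable convergence of the series in $n$ can be justified by the same Cauchy--Schwarz/Wick estimate used in the proof of Proposition 2.1 of \cite{bal2011asymptotics}, combined with assumption \eqref{eqn:C_n_assump} and $u_0\in\mathcal{S}(\mathbb{R}^d)$.

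For the second stage, taking expectation of the chaos of order $n$, mean-zero Gaussianity of $\nu$ kills the odd terms. For $n=2N$, Isserlis' theorem writes the expectation as a sum over pairings $P^{2N}$ of $\{1,\ldots,2N\}$, with each pair $(j',l')$ contributing the covariance factor $(2\pi)^d\hat{\mathfrak{C}}^\theta(s_{j'}-s_{l'},k_{j'})\delta(k_{j'}+k_{l'})$; the Dirac arises from spatial stationarity of $\nu$ together with the Fourier-domain representation of the correlation. Integrating against the $N$ delta constraints eliminates $N$ momenta (setting $k_{l'}=-k_{j'}$ pairwise) and leaves a free integration over $\vec k\in\mathbb{R}^{Nd}$. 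The original shift $A_j\vec k$ becomes $A_j'\vec k$ with $A_j'=A_j B_{P^{2N}}$, where $B_{P^{2N}}:\mathbb{R}^{Nd}\to\mathbb{R}^{2Nd}$ is the signed lift dictated by the pairing. Each free momentum $k_\alpha$ then populates exactly two of the original $2N$ slots, so its column block in $A_j'$ contains either zero or exactly two nonzero $\pm 1$ entries, yielding the claimed block-diagonal structure; the phase $G_j'(\vec s,\vec k,v):=G_j(\vec s,B_{P^{2N}}(\vec k),v)$ inherits reality from $G_j$.

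The main difficulty is combinatorial bookkeeping rather than genuine analysis: one must simultaneously track (i) the branch $j$ in the iteration, encoding which variable was shifted at each step, and (ii) the pairing $P^{2N}$ that identifies momenta up to sign. A careful induction on $n$ delivers the recursion for $G_j$ and the block structure of $A_j$ in stage one, and a second pass on a fixed pairing yields $A_j'$ and $G_j'$. A routine Fubini argument based on \eqref{eqn:C_n_assump} licenses interchange of the finite sum over pairings with the simplex time integrals.
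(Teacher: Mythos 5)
Your proposal is correct and follows essentially the same route as the paper: Duhamel iteration of \eqref{eqn:psi_theta} over the time simplex, expansion of each $L^\theta$ into a $(p+q)$-ary branching tree defining $A_j$ and the real phases $G_j$, square-integrable convergence via \cite[Proposition 2.1]{bal2011asymptotics}, then Wick/Isserlis pairing with the stationarity Dirac $\delta(k_{j'}+k_{l'})$ reducing $\mathbb{R}^{2Nd}$ to $\mathbb{R}^{Nd}$ and yielding $A_j'=A_jB_{P^{2N}}$ and $G_j'=G_j(\cdot,B_{P^{2N}}(\vec k),\cdot)$. The only bookkeeping detail the paper handles slightly differently is that the factors $i^{n}$ and the signs $\epsilon_m$ from $L^\theta$ are absorbed into the phases through the constant terms $\epsilon_m\frac{\pi}{2}\frac{\eps}{\eta}$ in $G_{r,m}$, which your write-up would carry as explicit prefactors instead; this is immaterial to the estimates that follow.
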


The analysis of $\Psi^\theta_{p,q}$ in \eqref{eq:Psieps} could also be carried out by computing products of Duhamel expansions of $\psi^\theta$ (with $p=1$ and $q=0$). It proved more convenient to write a Duhamel expansion directly for $\psi^\theta_{p,q}$ in \eqref{eqn:chaos_series} as it is more closely related to the moment equations available for the It\^o-Schr\"odinger solution and used in the analysis in \cite{bal2024complex}. For the rest of the paper, all we need of the above computations is the form of the moments in \eqref{eqn:chaos_2N} and the fact that the entries of all matrices $A'_j$ are bounded and that the phase terms $G_j'$ are real-valued. This allows us to obtain the following moment estimates.

\begin{lemma}\label{lem:E_psi_theta_bounds}
We have the following estimates:
\begin{equation}\label{eqn:E_psi_theta_bound1}
    \|\mathbb{E}\psi^\theta(z)\|\le \|\psi^\theta(0)\|e^{\frac{(p+q)^2z}{2\eta^2}{\color{black}\int_{\mathbb{R}}\|\hat{C}(s,\cdot)\|\mathrm{d}s}}\,,
\end{equation}
and
\begin{equation}\label{eqn:E_psi_theta_bound2}
    \|\sum\limits_{\substack{n,n'=1\\ n\neq n'}}^{p+q}\langle v_n\rangle^{\alpha'}\langle v_{n'}\rangle^{\alpha'}\Big(\prod\limits_{m=1}^{p+q}\langle v_m\rangle^\alpha\Big)\mathbb{E}\psi^\theta(z)\|\le \|\prod\limits_{m=1}^{p+q}\langle v_m\rangle^{\alpha+\alpha'}\psi^\theta(0)\|e^{\frac{\lambda[{\color{black}C},p,q,z,\alpha]}{\eta^2}} \,,
\end{equation}
with $\lambda$ defined in \eqref{eqn:lambda_def} below for $\alpha'\in[0,1]$ and $0\le\alpha\le\frac{d_0+1}{p+q}$. 
\end{lemma}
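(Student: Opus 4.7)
The plan is to drive both estimates through the chaos expansion from Lemma~\ref{lemma:chaos}. Since odd chaos vanish in expectation under the Gaussian hypothesis, $\mathbb{E}\psi^\theta=\sum_{N\ge 0}(\eta\eps\theta)^{-N}\mathbb{E}\psi^\theta_{2N}$ with each $\mathbb{E}\psi^\theta_{2N}$ given explicitly by~\eqref{eqn:chaos_2N}. Two facts reduce this to a clean TV-norm estimate: the phase $|e^{i\eta G'_j/\eps}|=1$, and the TV norm is translation invariant so $\|\psi^\theta(0,\cdot-A'_j\vec{k})\|=\|\psi^\theta(0)\|$. Applied inside~\eqref{eqn:chaos_2N} these give
\[
\|\mathbb{E}\psi^\theta_{2N}(z)\|\le (p+q)^{2N}\|\psi^\theta(0)\|\sum_{P^{2N}}\int_{[0,z]^{2N}_<}\prod_{(j',l')\in P^{2N}}\|\hat{\mathfrak{C}}^\theta(s_{j'}-s_{l'},\cdot)\|\prod_j\mathrm{d}s_j.
\]
The pair-symmetry identity $\sum_{P^{2N}}\int_{[0,z]^{2N}_<}f_P=\frac{1}{2^N N!}\int_{[0,z]^{2N}}f_{P_0}$, valid because any fixed pairing $P_0$ is stabilized by the $2^N N!$ intra- and inter-pair permutations, then factors the integral into $N$ identical copies of $\int_{[0,z]^2}\|\hat{\mathfrak{C}}^\theta(s-s',\cdot)\|\mathrm{d}s\mathrm{d}s'\le z\cdot\frac{\eps\theta}{\eta}I$, where $I=\int_\mathbb{R}\|\hat{\mathfrak{C}}(t,\cdot)\|\mathrm{d}t$ is extracted by the change of variables $t=\eta(s-s')/(\eps\theta)$. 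The $(\eps\theta/\eta)^N$ so produced combines with the prefactor $(\eta\eps\theta)^{-N}$ to leave $\eta^{-2N}$, and summing in $N$ yields
\[
\|\mathbb{E}\psi^\theta(z)\|\le\|\psi^\theta(0)\|\sum_{N\ge 0}\frac{1}{N!}\bigg(\frac{(p+q)^2 zI}{2\eta^2}\bigg)^N=\|\psi^\theta(0)\|e^{(p+q)^2 zI/(2\eta^2)},
\]
which is~\eqref{eqn:E_psi_theta_bound1}.

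For~\eqref{eqn:E_psi_theta_bound2}, first extract the finite sum $\sum_{n\neq n'}$ from the TV norm by triangle inequality and, using $\langle v_m\rangle\ge 1$, dominate each summand by $\|\prod_m\langle v_m\rangle^{\alpha+\alpha'}\mathbb{E}\psi^\theta(z)\|$, absorbing the $(p+q)(p+q-1)$ summands into an overall constant. To move these weights onto the source, shift $v\mapsto v+A'_j\vec{k}$ inside the TV-norm integral so that $\psi^\theta(0,v-A'_j\vec{k})$ becomes $\psi^\theta(0,v)$, and apply Peetre's inequality $\langle v_m+(A'_j\vec{k})_m\rangle^{\alpha+\alpha'}\le C\langle v_m\rangle^{\alpha+\alpha'}\langle(A'_j\vec{k})_m\rangle^{\alpha+\alpha'}$. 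The structural fact that each column block of $A'_j$ contains at most two non-zero $d\times d$ blocks then gives $\prod_m\langle(A'_j\vec{k})_m\rangle^{\alpha+\alpha'}\le C\prod_r\langle k_r\rangle^{2(\alpha+\alpha')}$, so each covariance factor acquires a momentum weight $\langle k\rangle^{2(\alpha+\alpha')}$. With $\alpha\le(d_0+1)/(p+q)$ (hence $2\alpha\le d_0+1$ since $p+q\ge 2$) and $\alpha'\le 1$, the exponent $2(\alpha+\alpha')\le d_0+3$ fits the integrability budget of~\eqref{eqn:C_n_assump}, so $\int\langle k\rangle^{2(\alpha+\alpha')}|\hat{\mathfrak{C}}(s,k)|\mathrm{d}s\mathrm{d}k$ is finite. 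Repeating the first-part argument with $I$ replaced by this weighted integral yields the claimed bound, with $\lambda$ of the form $(p+q)^2 z$ times that finite integral, uniformly in $\alpha'\in[0,1]$.

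The main technical obstacle I expect is the combinatorial accounting that recovers $1/N!$ for summability: naively, $(2N-1)!!$ pairings times $(p+q)^{2N}$ branches against a time budget $z^N$ would produce only a divergent geometric series, and it is the pair-symmetry reduction above, together with $(2N-1)!!/(2N)!=1/(2^N N!)$ and the fact that the integrand for a fixed pairing factors into $N$ independent two-time integrals, that restores the factorial decay and yields the exponential rate $\sim z/\eta^2$. A secondary subtlety is that Peetre's inequality must be applied column-by-column of $A'_j$ rather than variable-by-variable; the two-non-zero-blocks-per-column constraint is precisely what keeps the worst-case weight per covariance factor within $\langle k\rangle^{d_0+3}$, matching hypothesis~\eqref{eqn:C_n_assump}.
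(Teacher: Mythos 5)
Your proof is correct and follows essentially the same route as the paper's: chaos/Wick expansion, unimodular phases plus translation invariance of the TV norm, simplex symmetrization with the $(2N)!/(2^NN!)$ pairing count and a change of variables extracting $\eps\theta/\eta$ per pair, and, for the weighted bound, a Peetre-type shift of the $\langle v_m\rangle$-weights onto the initial data and the $k$-variables controlled by assumption \eqref{eqn:C_n_assump}. The only deviations are cosmetic bookkeeping of weights (you place $\alpha+\alpha'$ on every variable, absorbing the finitely many $(n,n')$ terms into a constant, and exploit the two-nonzero-blocks structure of $A_j'$ to get $\langle k\rangle^{2(\alpha+\alpha')}$ per covariance factor, versus the paper's cruder $\langle k\rangle^{2+(p+q)\alpha}$), and both versions fit the same integrability budget and yield the stated exponential rate.
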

\begin{proof}
    From the chaos expansion~\eqref{eqn:chaos_series},~\eqref{eqn:chaos_2N},  we have
    \begin{equation*}
        \begin{aligned}
            \|\mathbb{E}\psi^\theta(z)\|\le \sum\limits_{N\ge 0}\frac{1}{(\eta\eps\theta)^N}\int\limits_{[0,z]^{2N}_<}\int\limits_{\mathbb{R}^{Nd}}\sum\limits_{P^{2N}}\sum\limits_{j=1}^{(p+q)^{2N}}\|\psi^\theta(0,v-A_j'\Vec{k})\|\prod\limits_{(j',l')\in P^{2N}}{\color{black}|\hat{\mathfrak{C}}^\theta(s_{j'}-s_{l'},k_{j'})|}\frac{\mathrm{d}k_{j'}}{(2\pi)^d}\prod\limits_{j=1}^{2N}\mathrm{d}s_j\,.
        \end{aligned}
    \end{equation*}
    Since a shift in $v$ does not affect the total variation norm, we have that
       \begin{equation*}
        \begin{aligned}
            \|\mathbb{E}\psi^\theta(z)\|&\le \|\psi^\theta(0)\| \sum\limits_{N\ge 0}\frac{(p+q)^{2N}}{(\eta\eps\theta)^N}\int\limits_{[0,z]^{2N}_<}\int\limits_{\mathbb{R}^{Nd}}\sum\limits_{P^{2N}}\prod\limits_{(j',l')\in P^{2N}}|{\color{black}\hat{\mathfrak{C}}^\theta(s_{j'}-s_{l'},k_{j'})|}\frac{\mathrm{d}k_{j'}}{(2\pi)^d}\prod\limits_{j=1}^{2N}\mathrm{d}s_j\\
            &= \|\psi^\theta(0)\| \sum\limits_{N\ge 0}\frac{(p+q)^{2N}}{(\eta\eps\theta)^N}\int\limits_{[0,z]^{2N}_<}\sum\limits_{P^{2N}}\prod\limits_{(j',l')\in P^{2N}}{\color{black}\|\hat{\mathfrak{C}}^\theta(s_{j'}-s_{l'},\cdot)\|}\prod\limits_{j=1}^{2N}\mathrm{d}s_j\,.
        \end{aligned}
    \end{equation*}
    Now we can symmetrize the simplex:
    \begin{equation*}
        \begin{aligned}
            \|\mathbb{E}\psi^\theta(z)\| &\le \|\psi^\theta(0)\| \sum\limits_{N\ge 0}\frac{1}{(2N)!}\frac{(p+q)^{2N}}{(\eta\eps\theta)^N}\int\limits_{[0,z]^{2N}}\sum\limits_{P^{2N}}\prod\limits_{(j',l')\in P^{2N}}{\color{black}\|\hat{\mathfrak{C}}^\theta(s_{j'}-s_{l'},\cdot)\|}\prod\limits_{j=1}^{2N}\mathrm{d}s_j\\
            &=\|\psi^\theta(0)\| \sum\limits_{N\ge 0}\frac{(p+q)^{2N}}{2^NN!}\Big(\frac{1}{\eta\eps\theta}\int\limits_{[0,z]^2}{\color{black}\|\hat{\mathfrak{C}}^\theta
            (s_1-s_2,\cdot)\|}\mathrm{d}s_1\mathrm{d}s_2\Big)^N\,.
        \end{aligned}
    \end{equation*}
  Here we have used the fact that $|P^{2N}|=\frac{(2N)!}{2^NN!}$.  From a change of variables $\frac{\eta}{\eps\theta}(s_1-s_2)\to s_2$, we have
    \begin{equation*}
        \begin{aligned}
            \|\mathbb{E}\psi^\theta(z)\| &\le \|\psi^\theta(0)\| \sum\limits_{N\ge 0}\frac{(p+q)^{2N}}{2^NN!}\Big(\frac{z}{\eta^2}{\color{black}\int_{\mathbb{R}}\|\hat{C}(s,\cdot)\|\mathrm{d}s}\Big)^N= \|\psi^\theta(0)\| e^{\frac{(p+q)^2z}{2\eta^2}{\color{black}\int_{\mathbb{R}}\|\hat{C}(s,\cdot)\|\mathrm{d}s}}\,.
        \end{aligned}
    \end{equation*}
For the second part, we note that
 \begin{equation*}
     \begin{aligned}
\|\sum\limits_{\substack{n,n'=1\\ n\neq n'}}^{p+q}\prod\limits_{m=1}^{p+q}\langle v_m\rangle^{\alpha_m}\mathbb{E}\psi^\theta(z)\|&\le \sum\limits_{N\ge 0}\frac{1}{(\eta\eps\theta)^N}\int\limits_{[0,z]^{2N}_<}\int\limits_{\mathbb{R}^{Nd}}\sum\limits_{P^{2N}}\sum\limits_{j=1}^{(p+q)^{2N}}\|\sum\limits_{\substack{n,n'=1\\ n\neq n'}}^{p+q}\prod\limits_{m=1}^{p+q}\langle v_m\rangle^{\alpha_m}\psi^\theta(0,v-A_j'\Vec{k})\|\\
&\times\prod\limits_{(j',l')\in P^{2N}}{\color{black}|\hat{\mathfrak{C}}^\theta(s_{j'}-s_{l'},k_{j'})|}\frac{\mathrm{d}k_{j'}}{(2\pi)^d}\prod\limits_{j=1}^{2N}\mathrm{d}s_j   \,,
     \end{aligned}
 \end{equation*}
 where $\alpha_m=\alpha$ if $m\neq n,n'$ and $\alpha_m=\alpha'+\alpha$ if $m=n,n'$.
 
 For each arrangement $A_j'$, we make a change of variables $v_m-[A'_j\Vec{k}]_m$ so that
 \begin{equation}\label{eqn:v_m_alpha_prod1}
     \begin{aligned}
      \|\prod\limits_{m=1}^{p+q}\langle v_m\rangle^{\alpha_m}\psi^\theta(0,v-A_j'\Vec{k})\|&=\|\prod\limits_{m=1}^{p+q}\langle v_m+[A'_j\Vec{k}]_m\rangle^{\alpha_m}\psi^\theta(0,v)\|\,.
     \end{aligned}
 \end{equation}
 We note that
 \begin{equation*}
   1\le  \langle \xi+k\rangle^2=1+|\xi+k|^2\le 1+2(|\xi|^2+|k|^2)\le 2\langle \xi\rangle^2\langle k\rangle^2\,.
 \end{equation*}
Using Lemma~\ref{lemma:chaos},  this can be generalized to give
 \begin{equation}\label{eqn:v_m_alpha_prod2}
  1\le   \langle v_m+[A'_j\Vec{k}]_m\rangle^{\alpha_m}\le C^N\langle v_m\rangle^{\alpha_m}\prod\limits_{j=1}^N\langle [\Vec{k}]_j\rangle^{\alpha_m}\,.
 \end{equation}
 This gives
 \begin{equation*}
     \begin{aligned}
 \|\sum\limits_{\substack{n,n'=1\\ n\neq n'}}^{p+q}\prod\limits_{m=1}^{p+q}\langle v_m\rangle^{\alpha_m}\mathbb{E}\psi^\theta(z)\|&\le \|\prod\limits_{m=1}^{p+q}\langle v_m\rangle^{\alpha'+\alpha}\psi^\theta(0)\|\sum\limits_{N\ge 0}\frac{C^{(p+q)N}(p+q)^{2N}}{(\eta\eps\theta)^N}\\
 &\times\int\limits_{[0,z]^{2N}_<}\int\limits_{\mathbb{R}^{Nd}}\sum\limits_{P^{2N}}\prod\limits_{(j',l')\in P^{2N}}{\color{black}|\hat{\mathfrak{C}}^\theta(s_{j'}-s_{l'},k_{j'})|}\langle k_{j'}\rangle^{2+(p+q)\alpha}\frac{\mathrm{d}k_{j'}}{(2\pi)^d}\prod\limits_{j=1}^{2N}\mathrm{d}s_j   \,.           
     \end{aligned}
 \end{equation*}
 Then as before, symmetrizing the simplex and making an appropriate change of variables gives
 \begin{equation*}
     \begin{aligned}
 &\|\sum\limits_{\substack{n,n'=1\\ n\neq n'}}^{p+q}\prod\limits_{m=1}^{p+q}\langle v_m\rangle^{\alpha_m}\mathbb{E}\psi^\theta(z)\|\\
 &\le \|\prod\limits_{m=1}^{p+q}\langle v_m\rangle^{\alpha'+\alpha}\psi^\theta(0)\|\sum\limits_{N\ge 0}\frac{C^{(p+q)N}(p+q)^{2N}}{2^NN!\eta^{2N}}z^N{\color{black}\Big(\int_{\mathbb{R}^{d+1}}\langle k\rangle^{2+(p+q)\alpha}|\hat{C}(s,k)|\mathrm{d}s\mathrm{d}k\Big)}^N \\
 &\le \|\prod\limits_{m=1}^{p+q}\langle v_m\rangle^{\alpha'+\alpha}\psi^\theta(0)\|e^{\frac{\lambda[{\color{black}C},p,q,z,\alpha]}{\eta^2}},           
     \end{aligned}
 \end{equation*}
 with
 \begin{equation}\label{eqn:lambda_def}
     \lambda[{\color{black}C},p,q,z,\alpha]=\frac{1}{2}C^{p+q}(p+q)^2z{\color{black}\int_{\mathbb{R}^{d+1}}\langle s\rangle\langle k\rangle^{2+(p+q)\alpha}|\hat{C}(s,k)|\mathrm{d}s\mathrm{d}k}\,.
 \end{equation}
\end{proof}
\subsection{Convergence of moments: from paraxial to It\^o-Schr\"{o}dinger}

The main results in Theorems \ref{thm:complex_Gauss_kinetic} and \ref{thm:complex_Gauss_diff} rely on the following decomposition, which is the main result of this paper:
\begin{theorem}\label{thm:psi_theta_decomp}
    The moments of the phase compensated wave field in \eqref{eq:Psieps} may be decomposed as
    \begin{equation*}
        \Psi_{p,q}^\theta(z,v) =\mathcal{B}^\theta_{p,q}(z,v)+\mathscr{R}^\theta_{p,q}(z,v)\,,
    \end{equation*}
    where $\mathcal{B}^\theta_{p,q}(z,v)$ is the solution to 
    \begin{equation}\label{eqn:B_theta}
        \mathcal{B}^\theta_{p,q}(z,v)=\psi^\theta(0,v)+\int\limits_{0}^z\mathcal{L}^\theta(s)\mathcal{B}^\theta_{p,q}(s)\mathrm{d}s\,,
    \end{equation}
    with $\mathcal{L}^\theta$ given by
    \begin{equation}\label{eqn:L_limit_expnsn}
        \begin{aligned}
            \mathcal{L}^\theta(z)\psi&=\frac{1}{\eta^2}\int\limits_{\mathbb{R}^d}\hat{R}(k)\Big[\sum\limits_{j=1}^p\sum\limits_{l=1}^qe^{\frac{i\eta z}{\eps}[g(\xi_j,k)-g(\zeta_l,k)]}\psi(\xi_j-k,\zeta_l-k)\\
            &-\sum\limits_{1\le j<j'\le p}e^{\frac{i\eta z}{\eps}[g(\xi_j,k)+g(\xi_{j'},-k)]}\psi(\xi_j-k,\xi_{j'}+k)\\
            &-\sum\limits_{1\le l<l'\le q}e^{-\frac{i\eta z}{\eps}[g(\zeta_l,k)+g(\zeta_{l'},-k)]}\psi(\zeta_l-k,\zeta_{l'}+k)-\frac{p+q}{2}\psi\Big]\frac{\mathrm{d}k}{(2\pi)^d}\,,
        \end{aligned}
    \end{equation}
    and $\mathscr{R}^\theta_{p,q}(z,v)$ satisfying 
    \begin{equation}\label{eqn:R_theta_bound}
        \|\prod\limits_{m=1}^{p+q}\langle v_m\rangle^\alpha \mathscr{R}^\theta_{p,q}(z,v)\|\le c(p,q,z)\frac{\theta}{\eta^2}e^{\frac{\lambda[{\color{black}C},p,q,z,\alpha]}{\eta^2}}\|\prod\limits_{m=1}^{p+q}\langle v_m\rangle^2\psi^\theta(0,v)\|\,,
    \end{equation}
    where $\lambda[{\color{black}C},p,q,z,\alpha]=\frac{1}{2}C^{p+q}(p+q)^2z{\color{black}\int_{\mathbb{R}^{d+1}}\langle s\rangle\langle k\rangle^{2+(p+q)\alpha}|\hat{C}(s,k)|\mathrm{d}s\mathrm{d}k}$ for a fixed constant $C$ and $0\le\alpha\le\frac{d_0+1}{p+q}$. \end{theorem}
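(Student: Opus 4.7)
The plan is to begin from the Gaussian chaos expansion \eqref{eqn:chaos_2N} of $\mathbb{E}\psi^\theta_{2N}$ and split the sum over pairings $P^{2N}$ into \emph{ladder} (nearest-neighbor) pairings, in which the $2N$ simplex indices are grouped as $(1,2),(3,4),\ldots,(2N-1,2N)$, and \emph{crossing} pairings. The ladder part will be shown to sum exactly to the Dyson series for $\mathcal{B}^\theta_{p,q}$ defined by \eqref{eqn:B_theta}, while the crossing part, together with the small defects left over by the ladder truncation, form the remainder $\mathscr{R}^\theta_{p,q}$.

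For the ladder piece I would argue by induction on $N$. Inside a single nearest-neighbor pair, perform the microscopic change of variables $s_{2j}=s_{2j-1}+\tfrac{\eps\theta}{\eta}\tau_j$ with $\tau_j\ge 0$, so that $\hat{\mathfrak{C}}^\theta(s_{2j-1}-s_{2j},k)=\hat{\mathfrak{C}}(-\tau_j,k)$ and the Jacobian $\tfrac{\eps\theta}{\eta}$ absorbs one power of the prefactor $(\eta\eps\theta)^{-1}$ in the chaos series. Extending $\tau_j$ to $[0,\infty)$ and using the symmetry $\mathfrak{C}(-s,-x)=\mathfrak{C}(s,x)$ produces $\tfrac12\hat R(k)$ by definition \eqref{eqn:R(x)}. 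Two consecutive applications of $L^\theta$, using the recursion for $G_j$ in Lemma \ref{lemma:chaos} together with the $k_1+k_2=0$ contraction, generate three families of terms: (i) cross-sign contractions $(+)(-)$ with distinct indices, producing the first sum in \eqref{eqn:L_limit_expnsn} with phase $e^{i\eta s_{2j-1}[g(\xi_j,k)-g(\zeta_l,k)]/\eps}$; (ii) same-sign contractions with distinct indices, producing the $\xi_j$--$\xi_{j'}$ and $\zeta_l$--$\zeta_{l'}$ terms after reindexing $j\neq j'\to j<j'$; and (iii) diagonal contractions where the two shifts on a single index cancel, whose phase collapses to a factor of size $O(\theta)$ after the change of variables and hence contributes the damping $-\tfrac{p+q}{2}\psi$. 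The doubling produced by the two orderings of each contraction (e.g.\ $(+)(-)$ vs.\ $(-)(+)$) cancels the $\tfrac12$ in $\tfrac12\hat R(k)$, yielding exactly the operator $\mathcal{L}^\theta$ of \eqref{eqn:L_limit_expnsn}, and iterating over pairs reconstructs the Dyson series for \eqref{eqn:B_theta}.

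The remainder $\mathscr{R}^\theta_{p,q}$ collects three error sources: (a) crossing pairings in $P^{2N}$; (b) boundary corrections from truncating $\tau_j$ to the finite range imposed by simplex ordering; and (c) higher-order corrections from expanding the phase $e^{i\eta G'_j/\eps}$ around its $\theta=0$ limit on ladder pairs. Errors (b) and (c) each carry an explicit factor $\tfrac{\eps\theta}{\eta}\tau_j$ which, combined with the $\langle s\rangle$-weighted integrability in \eqref{eqn:C_n_assump}, furnishes the $\theta$ prefactor; the additional $k$-weights and initial-data derivatives needed to integrate the residual oscillatory phases explain the strengthening of the weight on the right-hand side of \eqref{eqn:R_theta_bound} from $\langle v_m\rangle^\alpha$ to $\langle v_m\rangle^2$, via the shift estimate \eqref{eqn:v_m_alpha_prod2}. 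For error (a), any non-ladder pairing forces at least one pair of times to lie on a set of measure a factor $\tfrac{\eps\theta}{\eta}$ smaller than in the corresponding ladder configuration, again yielding the $\theta$ gain. Summing over $N$, the $(p+q)^{2N}$ index choices, and the pairings then reduces to the geometric-series argument used in Lemma \ref{lem:E_psi_theta_bounds} and produces the prefactor $e^{\lambda[C,p,q,z,\alpha]/\eta^2}$ together with the overall $\theta/\eta^2$.

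The main obstacle is step (a): establishing the $O(\theta)$ gain on every non-ladder pairing uniformly in $N$, in the $(p+q)^{2N}$ index labelings, and across all $(2N-1)!!-1$ non-ladder configurations, while preserving the combinatorial control of Lemma \ref{lem:E_psi_theta_bounds}. Everything else amounts to careful bookkeeping of the sign and index combinatorics produced by two successive $L^\theta$'s, matching them with the four terms of $\mathcal{L}^\theta$, and a Gr\"onwall-type iteration of \eqref{eqn:B_theta} to propagate the single-pair error across the full time window $[0,z]$.
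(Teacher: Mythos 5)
Your strategy is genuinely different from the paper's, and its decisive step is left unproved. You propose a full diagrammatic treatment: expand $\mathbb{E}\psi^\theta$ via the Wick/chaos formula \eqref{eqn:chaos_2N}, resum the nearest-neighbor (ladder) pairings into the Dyson series of \eqref{eqn:B_theta}, and dump all crossing pairings plus the per-rung defects (tail extension of the $\tau$-integral, phase freezing) into $\mathscr{R}^\theta_{p,q}$. The ladder bookkeeping you sketch is plausible (and your identification of the per-rung phase-freezing defect as a source of the $\theta$ error and of the strengthened weight $\langle v_m\rangle^2$ is correct: $|1-e^{i\theta s g}|\le \theta|s||g|$ with $|g(\xi,k)|\lesssim \langle k\rangle^2\langle\xi\rangle\langle\zeta\rangle$ is exactly what forces two extra powers of $\langle v_m\rangle$ on the initial data). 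But the crossing-diagram estimate — a gain of order $\theta$ for \emph{every} non-ladder pairing, uniformly in $N$, compatible with the simplex symmetrization and summable against the $(2N-1)!!$ pairings and $(p+q)^{2N}$ index choices with the $(\eta\eps\theta)^{-N}$ prefactor — is precisely the hard analytical content of this route, and you explicitly acknowledge that you have not established it. The heuristic ``a crossing forces one pair of times onto a set of measure smaller by $\eps\theta/\eta$'' is not a proof: after symmetrizing the simplex (which is how the crude bound of Lemma \ref{lem:E_psi_theta_bounds} controls the series) the time-ordering information you invoke is no longer freely available, and extracting the gain uniformly over all crossing topologies while retaining the $1/N!$ suppression is the kind of analysis that in the literature is only carried out for low moments or with substantially more machinery. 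As written, the theorem's bound \eqref{eqn:R_theta_bound} is therefore not obtained.

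The paper avoids this difficulty altogether. It iterates \eqref{eqn:psi_theta} only twice in Duhamel form, identifies $\mathcal{L}^\theta(s)=-\frac{1}{\eta\eps\theta}\int_s^\infty\mathbb{E}[L^\theta(s,\nu^\theta_s)L^\theta(s,\nu^\theta_{s'})]\mathrm{d}s'$, and splits the remainder into three explicit pieces: $\mathscr{R}_1$, the decoupling error $\mathbb{E}[LL\psi^\theta]-\mathbb{E}[LL]\mathbb{E}\psi^\theta$; $\mathscr{R}_2$, the phase mismatch between times $s$ and $s'$; and $\mathscr{R}_3$, the tail from extending the $s'$-integral to infinity. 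The chaos expansion is then used only inside $\mathscr{R}_1$, where the Gaussian identity $\mathbb{E}[\hat\nu_s\hat\nu_{s'}\prod_j\hat\nu_{s_j}]-\mathbb{E}[\hat\nu_s\hat\nu_{s'}]\mathbb{E}[\prod_j\hat\nu_{s_j}]=\sum_{m\neq n}\mathbb{E}[\hat\nu_s\hat\nu_{s_m}]\mathbb{E}[\hat\nu_{s'}\hat\nu_{s_n}]\mathbb{E}[\prod_{j\neq m,n}\hat\nu_{s_j}]$ means the only ``crossings'' that ever need to be estimated involve the two outermost vertices; their number at order $N$ is merely $O(N^2)$, the remaining pairings are resummed wholesale through the a priori bounds of Lemma \ref{lem:E_psi_theta_bounds}, and the constrained time integrations deliver the $\eps\theta/\eta^3$ gain with trivial combinatorics. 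The dominant error $\theta/\eta^2$ and the $\langle v_m\rangle^2$ weight then come from $\mathscr{R}_2$, i.e.\ from the phase mismatch you call defect (c) — but applied once, at the level of the twice-iterated equation, rather than on every rung of an infinite ladder resummation. If you want to salvage your route, you would either have to prove the uniform crossing estimate (essentially redoing a non-ladder-diagram analysis of the type in \cite{bal2011asymptotics} at all orders), or reorganize the argument so that only the two outermost vertices can cross, which is in effect the paper's proof.
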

\begin{proof}
        Iterating~\eqref{eqn:psi_theta} twice in integral form gives
\begin{equation}\label{eqn:E_psi_exact}
    \mathbb{E}\psi^\theta(z,v) =\mathbb{E}\psi^\theta(0,v)-\frac{1}{\eta\eps\theta}\int\limits_{0}^z\int\limits_{0}^s\mathbb{E}[L^\theta(s,\nu^\theta_s)L^\theta(s',\nu^\theta_{s'})\psi^\theta(s')]\mathrm{d}s'\mathrm{d}s\,.
\end{equation}
We use the following relations in turn 
\begin{align*}
    \psi^\theta & = \E \psi^\theta + (\psi^\theta-\E\psi^\theta)\\
    L^\theta(s',\nu^\theta_{s'}) & =L^\theta(s,\nu^\theta_{s'}) + (L^\theta(s',\nu^\theta_{s'})-L^\theta(s,\nu^\theta_{s'}))
    \\
    \int_0^z\int_0^s a(s,s') ds' ds & = \int_0^z \int_s^z a(s',s) ds' ds = \int_0^z \Big(\int_s^\infty - \int_z^\infty\Big)a(s',s) ds' ds 
\end{align*}
to obtain that 
\begin{equation*}
     \mathbb{E}\psi^\theta(z,v) =\mathbb{E}\psi^\theta(0,v)+\int\limits_{0}^z\mathcal{L}^\theta(s)\mathbb{E}\psi^\theta(s)\mathrm{d}s +\mathscr{R}^\theta_{p,q}(z,v)\,,
\end{equation*}
where the leading operator $\mathcal{L}^\theta(s)$ and the residual $\mathscr{R}^\theta_{p,q}(z,v)$ are given by
\begin{eqnarray}\label{eqn:L_limit_def}
    \mathcal{L}^\theta(s) & =& -\dfrac{1}{\eta\eps\theta}\dint\limits_{s}^\infty\mathbb{E}[L^\theta(s,\nu^\theta_s)L^\theta(s,\nu^\theta_{s'})]\mathrm{d}s' \\
    \mathscr{R}^\theta_{p,q}(z,v) & = &\mathscr{R}^\theta_{1}(z,v)+\mathscr{R}^\theta_{2}(z,v)+\mathscr{R}^\theta_3(z,v),
    \label{eqn:R_theta_def}
\end{eqnarray}
where
\begin{equation}\label{eqn:R123}
    \begin{aligned}
                \mathscr{R}^\theta_{1}(z,v)&=-\frac{1}{\eta\eps\theta}\int\limits_{0}^z\int\limits_{0}^s\Big(\mathbb{E}[L^\theta(s,\nu^\theta_s)L^\theta(s',\nu^\theta_{s'})\psi^\theta(s')]-\mathbb{E}[L^\theta(s,\nu^\theta_s)L^\theta(s',\nu^\theta_{s'})]\mathbb{E}\psi^\theta(s')\Big)\mathrm{d}s'\mathrm{d}s\\
                        \mathscr{R}^\theta_{2}(z,v)&=\frac{1}{\eta\eps\theta}\int\limits_{0}^z\int\limits_{0}^s\big(\mathbb{E}[L^\theta(s',\nu^\theta_{s'})L^\theta(s',\nu^\theta_s)]-\mathbb{E}[L^\theta(s,\nu^\theta_s)L^\theta(s',\nu^\theta_{s'})]\big)\mathbb{E}\psi^\theta(s')\mathrm{d}s'\mathrm{d}s\\
             \mathscr{R}^\theta_3(z,v)&=\frac{1}{\eta\eps\theta}\int\limits_{z}^\infty\int\limits_{0}^z\mathbb{E}[L^\theta(s',\nu^\theta_{s'})L^\theta(s',\nu^\theta_s)]\mathbb{E}\psi^\theta(s')\mathrm{d}s'\mathrm{d}s\,.
    \end{aligned}
\end{equation}

From~\eqref{eqn:L_theta} we observe that
\begin{equation*}
    \begin{aligned}
            &\mathbb{E}[L^\theta(z_1,\nu^\theta_{s_1})L^\theta(z_2,\nu^\theta_{s_2})]\psi\\
                 &=-\int\limits_{\mathbb{R}^{2d}}{\color{black}\frac{\mathbb{E}[\hat{\nu}^\theta_{s_1}(\mathrm{d}k)\hat{\nu}^\theta_{s_2}(\mathrm{d}k')]}{(2\pi)^{2d}}}\Big[\sum\limits_{j=1}^p\sum\limits_{l=1}^q(e^{\frac{i\eta}{\eps}[z_1g(\xi_j,k)-z_2g(\zeta_l,-k')]}\psi(\xi_j-k,\zeta_l+k')
                 \\& \qquad \qquad
                 +e^{-\frac{i\eta}{\eps}[z_1g(\zeta_l,-k)-z_2g(\xi_j,k')]}\psi(\xi_j-k',\zeta_l+k))\\
            &-\sum\limits_{j\neq j'}e^{\frac{i\eta}{\eps}[z_1g(\xi_j,k)+z_2g(\xi_{j'},k')]}\psi(\xi_j-k,\xi_{j'}-k')-\sum\limits_{l\neq l'}e^{-\frac{i\eta}{\eps}[z_1g(\zeta_l,-k)+z_2g(\zeta_{l'},-k')]}\psi(\zeta_l+k,\zeta_{l'}+k')\\
            &-\sum\limits_{j=1}^pe^{\frac{i\eta}{\eps}[z_1g(\xi_j,k)+z_2g(\xi_j-k,k')]}\psi(\xi_j-k-k')-\sum\limits_{l=1}^qe^{-\frac{i\eta}{\eps}[z_1g(\zeta_l,-k)+z_2g(\zeta_{l}+k,-k')]}\psi(\zeta_l+k+k')\Big]\,.
    \end{aligned}
\end{equation*}
Using the stationarity property of $\nu$:  $\color{black}{\mathbb{E}[\hat{\nu}^\theta_{s_1}(\mathrm{d}k)\hat{\nu}^\theta_{s_2}(\mathrm{d}k')]=(2\pi)^d\hat{\mathfrak{C}}^\theta(s_1-s_2,k)\delta(k+k')\mathrm{d}k}$, where $\mathfrak{C}^\theta$ is defined in \eqref{eqn:C_def}, we deduce from the above expression that 
\begin{equation}\label{eqn:LL_theta_expnsn}
        \begin{aligned}
            &\mathbb{E}[L^\theta(z_1,\nu^\theta_{s_1})L^\theta(z_2,\nu^\theta_{s_2})]\psi\\
            &=-\int\limits_{\mathbb{R}^d}\hat{\mathfrak{C}}^\theta(s_1-s_2,k)\Big[\sum\limits_{j=1}^p\sum\limits_{l=1}^q(e^{\frac{i\eta}{\eps}[z_1g(\xi_j,k)-z_2g(\zeta_l,k)]}+e^{-\frac{i\eta}{\eps}[z_1g(\zeta_l,k)-z_2g(\xi_j,k)]})\psi(\xi_j-k,\zeta_l-k)\\
            &-\sum\limits_{j\neq j'}e^{\frac{i\eta}{\eps}[z_1g(\xi_j,k)+z_2g(\xi_{j'},-k)]}\psi(\xi_j-k,\xi_{j'}+k)-\sum\limits_{l\neq l'}e^{-\frac{i\eta}{\eps}[z_1g(\zeta_l,k)+z_2g(\zeta_{l'},-k)]}\psi(\zeta_l-k,\zeta_{l'}+k)\\
            &-\sum\limits_{j=1}^pe^{\frac{i\eta}{\eps}[z_1g(\xi_j,k)+z_2g(\xi_j-k,-k)]}\psi-\sum\limits_{l=1}^qe^{-\frac{i\eta}{\eps}[z_1g(\zeta_l,k)+z_2g(\zeta_{l}-k,-k)]}\psi\Big]\frac{\mathrm{d}k}{(2\pi)^d}.
        \end{aligned}
\end{equation}
From~\eqref{eqn:L_limit_def} and the above calculation, we thus have that 
    \begin{equation*}
            \begin{aligned}
            \mathcal{L}^\theta(z)\psi&=\frac{1}{\eta\eps\theta}\int\limits_{\mathbb{R}^d}\int\limits_{z}^\infty\hat{\mathfrak{C}}^\theta(z-s,k)\Big[2\sum\limits_{j=1}^p\sum\limits_{l=1}^qe^{\frac{i\eta z}{\eps}[g(\xi_j,k)-g(\zeta_l,k)]}\psi(\xi_j-k,\zeta_l-k)\\
            &-\sum\limits_{j\neq j'}e^{\frac{i\eta z}{\eps}[g(\xi_j,k)+g(\xi_{j'},-k)]}\psi(\xi_j-k,\xi_{j'}+k)-\sum\limits_{l\neq l'}e^{-\frac{i\eta z}{\eps}[g(\zeta_l,k)+g(\zeta_{l'},-k)]}\psi(\zeta_l-k,\zeta_{l'}+k)\\
            &-\sum\limits_{j=1}^pe^{\frac{i\eta z}{\eps}[g(\xi_j,k)+g(\xi_j-k,-k)]}\psi-\sum\limits_{l=1}^qe^{-\frac{i\eta z}{\eps}[g(\zeta_l,k)+g(\zeta_{l}-k,-k)]}\psi\Big]\frac{\mathrm{d}s\mathrm{d}k}{(2\pi)^d}\,.
            \end{aligned}
    \end{equation*}
    This yields \eqref{eqn:L_limit_expnsn} after a change of variables $\frac{\eta(z-s)}{\eps\theta}\to s$ and noting that
    \begin{equation*}
           g(\xi,k)+g(\xi-k,-k)=|\xi|^2-|\xi-k|^2+|\xi-k|^2-|\xi-k+k|^2=0\,.
    \end{equation*}
The bound \eqref{eqn:R_theta_bound} and the proof of the theorem require lengthy estimates obtained in Lemmas \ref{lemma:R1_theta_bound}, \ref{lemma:R2_theta_bound} and \ref{lemma:R3_theta_bound} below.
\end{proof}

We note that $\mathcal{L}^\theta(z)$ is precisely the integral operator of the phase compensated moments described in~\cite{bal2024complex} under the It\^o-Schr\"{o}dinger white noise regime. The second term in the remainder comes from a mismatch in phases evaluated at different times. The third term contains correlations between points in $z$ located far apart and is small by our assumptions of decorrelations in $z$. Finding bounds for the first term is non-trivial and we will use the Duhamel representation of the solution here. We will show that this term involves pairings not consistent with the time ordering in the simplex $0\le s'_{1}\le \cdots\le s'_{2N}\le s'\le s\le z$ (and similar to the non ladder diagrams in~\cite{bal2011asymptotics}).  

Before proving \eqref{eqn:R_theta_bound}, we conclude the proofs of Theorems \ref{thm:complex_Gauss_kinetic} and \ref{thm:complex_Gauss_diff}. 
We first note that the above result translates in the physical domain for $\mu^\theta_{p,q}$ defined in \eqref{eq:mupq} as follows.
\begin{remark}
    The $p+q$th moment of $u^\theta$ can be decomposed as
    \begin{equation*}
        \mu_{p,q}^\theta(z,X,Y) =\tilde{\mathcal{B}}^\theta_{p,q}(z,X,Y)+\tilde{\mathscr{R}}_{p,q}^\theta(z,X,Y)\,,
    \end{equation*}
    with $\tilde{\mathcal{B}}^\theta_{p,q}$ given by
    \begin{equation*}
        \tilde{\mathcal{B}}^\theta_{p,q}(z,X,Y)=\int\limits_{\mathbb{R}^{(p+q)d}}\Pi^\theta(z,v)\mathcal{B}^\theta_{p,q}(z,v)e^{i(\sum_{j=1}^p\xi_j\cdot x_j-\sum_{l=1}^q\zeta_l\cdot y_l)}\prod\limits_{j=1}^p\frac{\mathrm{d}\xi_j}{(2\pi)^{d}}\prod\limits_{l=1}^q\frac{\mathrm{d}\zeta_l}{(2\pi)^{d}}\,,
    \end{equation*}
    where $\mathcal{B}^\theta_{p,q}$ is given by~\eqref{eqn:B_theta} and $\tilde{\mathscr{R}}_{p,q}^\theta$ satisfies
    \begin{equation*}
        \| \tilde{\mathscr{R}}_{p,q}^\theta(z,X,Y)\|_\infty\le c(p,q,z)\frac{\theta}{\eta^2}e^{\frac{\lambda[{\color{black}C},p,q,z,0]}{\eta^2}}\|\prod\limits_{m=1}^{p+q}\langle v_m\rangle^2\psi^\theta(0,v)\|\,.
    \end{equation*}    
\end{remark}

\paragraph{Convergence of moments: From paraxial to complex Gaussian}

Let $\gamma=(j,l)$ denote a pairing with $1\le j\le p, 1\le l\le q$. We denote by $\Lambda_\kappa$, the set of all pairings $(j,l)$, $1\le j\le p, 1\le l\le q$ such that if $\gamma,\gamma'\in\Lambda_\kappa$, then $\gamma_{1,2}\neq \gamma'_{1,2}$. For a given arrangement $\kappa$, let $m(\kappa)$ denote the number of such pairings in the set $\Lambda_\kappa$ and $K=\sum m(\kappa)=\sum\limits_{m=1}^{p\wedge q}\binom{p}{m}\binom{q}{m}m!$ denote the total number of all such distinct pairings.

Define $U^\theta_\gamma(z)$ to be a bounded operator on $\mathcal{M}_B(\mathbb{R}^{(p+q)d})$ such that $\rho(z,v)=[U^\theta_\gamma(z)\rho(0)](v) , \rho(0,v)\in\mathcal{M}_B(\mathbb{R}^{(p+q)d})$ satisfies
\begin{equation*}
    \partial_z\rho=\frac{1}{\eta^2}\int\limits_{\mathbb{R}^d}\hat{R}(k)e^{\frac{i\eta z}{\eps}[g(\xi_j,k)-g(\zeta_l,k)]}\rho(z,\xi_j-k,\zeta_l-k)\frac{\mathrm{d}k}{(2\pi)^d}\,.
\end{equation*}
Also, let $U_\eta(z)=e^{-\frac{R(0)z}{\eta^2}}$. 
\begin{theorem}\label{thm:Psi_Gaussian_decomp}
The $p+q$th moments of the phase compensated field satisfy the decomposition
\begin{equation}\label{eqn:Psi_Gaussian}
    \Psi^\theta_{p,q}(z,v)=[N_{p,q}^\theta(z)\psi^\theta(0)](v)+E^\theta_{p,q}(z,v)\,,
\end{equation}    
where $N_{p,q}^\theta(z)$ is a bounded operator on $\mathcal{M}_B(\mathbb{R}^{(p+q)d})$ given by
\begin{equation}\label{eqn:N}
    N_{p,q}^\theta(z)=U_\eta^{\frac{p+q}{2}}(z)\sum\limits_{\kappa=1}^{K}\prod\limits_{\gamma\in\Lambda_\kappa}(U^\theta_\gamma(z)-\mathbb{I})+U_\eta^{\frac{p+q}{2}}(z)\,,
\end{equation}
and $E^\theta_{p,q}$ satisfies
\begin{equation}\label{eqn:E_bound}
    \|E_{p,q}^\theta(z,v)\|\le c(p,q,z)(\eps^{\frac 13}+\theta^{\frac 12})\|\prod\limits_{m=1}^{p+q}\langle v_m\rangle^2\psi^\theta(0)\|\,.
\end{equation}
\begin{proof}
From Theorem~\ref{thm:psi_theta_decomp}, we have that 
    \begin{equation*}
        \Psi_{p,q}^\theta(z,v) =\mathcal{B}^\theta_{p,q}(z,v)+\mathscr{R}^\theta_{p,q}(z,v)\,,
    \end{equation*}
    with $\mathcal{B}^\theta_{p,q}$ as in~\eqref{eqn:B_theta} and $\mathscr{R}^\theta$ satisfying the bound~\eqref{eqn:R_theta_bound}. Note that $\mathcal{B}_{p,q}^\theta$ follows the same evolution equation as that of the phase compensated moments of the It\^o-Schr\"{o}dinger equation.  Due to this, the results in~\cite{bal2024complex} apply and using \cite[Theorem 4.2]{bal2024complex}, we have that
    \begin{equation*}
        \mathcal{B}^\theta_{p,q}(z,v)=[N^\theta_{p,q}(z)\psi^\theta(0)](v)+\mathcal{E}_{p,q}^\theta(z,v)\,,
    \end{equation*}
    with $N_{p,q}^\theta(z)$ as defined in~\eqref{eqn:N} and $\mathcal{E}_{p,q}^\theta$ satisfying
    \begin{equation*}
        \|\mathcal{E}_{p,q}^\theta(z,v)\|\le c(p,q,z)\eps^{\frac 13}\|\prod\limits_{m=1}^{p+q}\psi^\theta(0)\|\,.
    \end{equation*}
    This gives
    \begin{equation*}
        E^\theta_{p,q}(z,v)=\mathscr{R}_{p,q}^\theta(z,v)+\mathcal{E}_{p,q}^\theta(z,v)
    \end{equation*}
 with $E^\theta_{p,q}$ satisfying the bound in~\eqref{eqn:E_bound}.
\end{proof}
\end{theorem}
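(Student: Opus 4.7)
The plan is to leverage Theorem~\ref{thm:psi_theta_decomp} and then transfer an already-available complex Gaussian decomposition from the It\^o-Schr\"odinger setting. The essential observation is that the operator $\mathcal{L}^\theta$ defined in \eqref{eqn:L_limit_expnsn} is precisely the phase-compensated moment generator of the It\^o-Schr\"odinger wavefield, so the integral equation \eqref{eqn:B_theta} coincides with the moment equation analyzed in \cite{bal2024complex}. Thus $\mathcal{B}^\theta_{p,q}$ is the It\^o-Schr\"odinger phase-compensated moment starting from the paraxial initial datum $\psi^\theta(0)$, and any structural decomposition established there applies verbatim.

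Concretely, the steps are as follows. First, Theorem~\ref{thm:psi_theta_decomp} yields the splitting $\Psi^\theta_{p,q}(z,v) = \mathcal{B}^\theta_{p,q}(z,v) + \mathscr{R}^\theta_{p,q}(z,v)$. Second, I would invoke \cite[Theorem~4.2]{bal2024complex} applied to $\mathcal{B}^\theta_{p,q}$ to obtain $\mathcal{B}^\theta_{p,q}(z,v) = [N^\theta_{p,q}(z)\psi^\theta(0)](v) + \mathcal{E}^\theta_{p,q}(z,v)$, with $N^\theta_{p,q}$ as in \eqref{eqn:N} and $\|\mathcal{E}^\theta_{p,q}\| \le c(p,q,z)\eps^{1/3}\|\prod_{m}\langle v_m\rangle^2\psi^\theta(0)\|$. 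Third, setting $E^\theta_{p,q} := \mathscr{R}^\theta_{p,q} + \mathcal{E}^\theta_{p,q}$, the $\eps^{1/3}$ term in \eqref{eqn:E_bound} is immediate, and for the $\theta^{1/2}$ contribution I would combine \eqref{eqn:R_theta_bound} with the scaling hypothesis \eqref{eq:csteps}. Writing $\frac{\theta}{\eta^2}e^{\lambda/\eta^2} = \theta^{1/2}\cdot\theta^{1/2}\eta^{-2}e^{\lambda/\eta^2}$ and using the fact that $\theta^\alpha e^{C/\eta^2}$ is bounded for every $\alpha>0$, the trailing factor is uniformly bounded, yielding the claimed $\theta^{1/2}$ contribution.

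The proof is thus largely mechanical once Theorem~\ref{thm:psi_theta_decomp} is in hand; the genuine difficulty sits entirely in that preceding result. In particular, establishing \eqref{eqn:R_theta_bound} requires controlling the three residual contributions $\mathscr{R}^\theta_1, \mathscr{R}^\theta_2, \mathscr{R}^\theta_3$ in \eqref{eqn:R123}: the first uses Gaussianity and a non-ladder-diagram estimate of the Duhamel chaos expansion of $\psi^\theta$, while the remaining two exploit, respectively, a phase-continuity argument in the covariance and the integrability-in-$z$ decay encoded by \eqref{eqn:C_n_assump}. The present theorem then merely inherits the complex Gaussian structure from the It\^o-Schr\"odinger limit, with two distinct error channels: $\mathcal{E}^\theta_{p,q}$ is the intrinsic It\^o-Schr\"odinger approximation error of size $\eps^{1/3}$, while $\mathscr{R}^\theta_{p,q}$ measures the paraxial-to-white-noise discrepancy of size $\theta^{1/2}$.
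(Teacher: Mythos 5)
Your proposal follows the paper's proof essentially verbatim: split $\Psi^\theta_{p,q}$ via Theorem~\ref{thm:psi_theta_decomp}, observe that $\mathcal{B}^\theta_{p,q}$ solves the same moment equation as the phase-compensated It\^o--Schr\"odinger moments so that \cite[Theorem~4.2]{bal2024complex} applies, and absorb $\mathscr{R}^\theta_{p,q}$ into the error using \eqref{eqn:R_theta_bound} together with the scaling hypothesis \eqref{eq:csteps}. Your explicit factorization $\frac{\theta}{\eta^2}e^{\lambda/\eta^2}=\theta^{1/2}\cdot\theta^{1/2}\eta^{-2}e^{\lambda/\eta^2}$ just spells out a step the paper leaves implicit, so the argument is correct and coincides with the paper's.
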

Define the inverse (phase re-compensated) Fourier transform
\begin{equation}\label{eqn:N_tilde}
    \tilde{N}^\theta_{p,q}(z,X,Y)=\int\limits_{\mathbb{R}^{(p+q)d}}\Pi^\theta(z,v)[N_{p,q}^\theta(z)\psi^\theta(0)](v)e^{i(\sum_{j=1}^p\xi_j\cdot x_j-\sum_{l=1}^q\zeta_l\cdot y_l)}\prod\limits_{j=1}^p\frac{\mathrm{d}\xi_j}{(2\pi)^{d}}\prod\limits_{l=1}^q\frac{\mathrm{d}\zeta_l}{(2\pi)^{d}}\,,
\end{equation}
with $\tilde{E}^\theta_{p,q}(z,X,Y)$ defined in a similar manner for $E^\theta_{p,q}$.
Then we have the following direct consequence of the above results in physical domain:
\begin{theorem}\label{thm:mu_theta_decomp_final}
  The $p+q$th moments of $u^\theta$ can be written as
  \begin{equation*}
      \mu^\theta_{p,q}(z,X,Y)=\tilde{N}_{p,q}^\theta(z,X,Y)+\tilde{E}^\theta_{p,q}(z,X,Y)\,,
  \end{equation*}
  with $\tilde{N}_{p,q}^\theta$ as defined in~\eqref{eqn:N_tilde} and
  \begin{equation*}
      \|\tilde{E}_{p,q}^\theta(z,X,Y)\|_\infty\le c(p,q,z)(\eps^{\frac 13}+\theta^{\frac 12})\|\prod\limits_{m=1}^{p+q}\langle v_m\rangle^2\psi^\theta(0)\|\,.
  \end{equation*}
\end{theorem}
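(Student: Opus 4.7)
The plan is to transfer the Fourier-domain decomposition of Theorem \ref{thm:Psi_Gaussian_decomp} to the physical domain via the inverse (phase re-compensated) Fourier transform. By construction,
\begin{equation*}
\hat{\mu}^\theta_{p,q}(z,v) = \Pi^\theta_{p,q}(z,v)\Psi^\theta_{p,q}(z,v),
\end{equation*}
so $\mu^\theta_{p,q}(z,X,Y)$ equals the inverse Fourier transform of $\Pi^\theta_{p,q}\Psi^\theta_{p,q}$ evaluated against the exponential kernel $e^{i(\sum \xi_j \cdot x_j - \sum \zeta_l \cdot y_l)}$, exactly as in the definition \eqref{eqn:N_tilde} of $\tilde N^\theta_{p,q}$.

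First, I would invoke Theorem \ref{thm:Psi_Gaussian_decomp} to write $\Psi^\theta_{p,q}(z,v) = [N^\theta_{p,q}(z)\psi^\theta(0)](v) + E^\theta_{p,q}(z,v)$. Then by linearity of the inverse Fourier transform, the contribution of the first term is precisely $\tilde N^\theta_{p,q}(z,X,Y)$ as defined in \eqref{eqn:N_tilde}, while the contribution of the second term defines $\tilde E^\theta_{p,q}(z,X,Y)$ by the analogue of \eqref{eqn:N_tilde} with $N^\theta_{p,q}\psi^\theta(0)$ replaced by $E^\theta_{p,q}$. This immediately produces the stated decomposition.

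For the uniform bound on $\tilde E^\theta_{p,q}$, the key observation is that the phase factor $\Pi^\theta_{p,q}(z,v)$ is unimodular, and the exponential kernel is as well, so for every $(X,Y)$,
\begin{equation*}
|\tilde E^\theta_{p,q}(z,X,Y)| \le (2\pi)^{-(p+q)d}\, \|E^\theta_{p,q}(z,\cdot)\|,
\end{equation*}
where $\|\cdot\|$ is the total variation norm on $\mathcal{M}_B(\mathbb{R}^{(p+q)d})$. Combining this with the TV estimate \eqref{eqn:E_bound} of Theorem \ref{thm:Psi_Gaussian_decomp} and absorbing the constant $(2\pi)^{-(p+q)d}$ into $c(p,q,z)$ yields the desired $\sL^\infty$ bound in $(X,Y)$ with prefactor $\eps^{1/3}+\theta^{1/2}$.

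There is no real obstacle here: the whole point of working with the TV topology on the Fourier side, as emphasized in the introduction, is precisely to guarantee that Fourier-domain error estimates translate into pointwise uniform bounds in the physical domain. The only bookkeeping is to verify that $E^\theta_{p,q}(z,\cdot)$ is indeed an element of $\mathcal{M}_B(\mathbb{R}^{(p+q)d})$ (rather than merely a distribution), which is inherited from the corresponding property of $\mathcal{B}^\theta_{p,q}$ and $\mathscr{R}^\theta_{p,q}$ established in Theorem \ref{thm:psi_theta_decomp} and the proof of Theorem \ref{thm:Psi_Gaussian_decomp}.
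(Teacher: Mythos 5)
Your proposal is correct and coincides with the paper's own (implicit) argument: the theorem is stated there as a direct consequence of Theorem \ref{thm:Psi_Gaussian_decomp}, obtained exactly as you do by applying the phase re-compensated inverse Fourier transform and using that $\Pi^\theta_{p,q}$ and the exponential kernel are unimodular, so the TV bound \eqref{eqn:E_bound} translates into the uniform bound on $\tilde{E}^\theta_{p,q}$.
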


\begin{proof}[Proof of Theorems \ref{thm:complex_Gauss_kinetic} and \ref{thm:complex_Gauss_diff}]
    These are direct consequences of the  estimate in Theorem \ref{thm:mu_theta_decomp_final} combined with the corresponding results in the It\^o-Schr\"odinger regime \cite[Theorems 2.2 \& 2.4]{bal2024complex}.
\end{proof}

It thus remains to conclude the proof of Theorem \ref{thm:psi_theta_decomp} by  proving the error bound \eqref{eqn:R_theta_bound}. We first make the following preparations. 
Recall that $\|\cdot\|$ denotes the total variation norm. 

\paragraph{Control of the error terms in \eqref{eqn:R_theta_bound}} 

We now state and prove the lemmas allowing us to conclude the proof of Theorem \ref{thm:psi_theta_decomp}.

\begin{lemma}\label{lemma:R1_theta_bound}
We have the first estimate:
    \begin{equation}\label{eqn:R1_theta_bound}
        \|\prod\limits_{m=1}^{p+q}\langle v_m\rangle^\alpha \mathscr{R}^\theta_{1}(z,v)\|\le c_1(p,q,z)\frac{\eps\theta}{\eta^3}\|\prod\limits_{m=1}^{p+q}\langle v_m\rangle^\alpha\psi^\theta(0,v)\|e^{\frac{\lambda[{\color{black}C},p,q,z,\alpha]}{\eta^2}}\,,
    \end{equation}
 where $\lambda$ is defined in~\eqref{eqn:lambda_def}, $0\le \alpha\le \frac{d_0+1}{p+q}$ and $c_1$ is a constant independent of $\theta$.
 \end{lemma}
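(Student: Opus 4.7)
The plan is to expand $\psi^\theta(s')$ appearing inside $\mathscr{R}_1^\theta$ via the Duhamel chaos series of Lemma~\ref{lemma:chaos}, apply Wick's theorem term by term, and isolate the ``non-ladder'' pairings that survive the cumulant subtraction. Since $\psi^\theta_0$ is deterministic, the $N=0$ contribution cancels outright. For each $N\ge 1$, Wick's theorem expresses $\mathbb{E}[L^\theta(s,\nu_s^\theta)L^\theta(s',\nu_{s'}^\theta)\psi^\theta_{2N}(s')]$ as a sum over pairings of $2N+2$ Gaussian factors, and the subset in which $(s,s')$ is itself a pair contributes exactly $\mathbb{E}[L^\theta(s)L^\theta(s')]\,\mathbb{E}\psi^\theta_{2N}(s')$; these are the pairings cancelled by the subtracted term in $\mathscr{R}_1^\theta$. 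What survives are the cross-pairings in which $s$ pairs with some interior time $s_j^\prime$ and $s'$ with a distinct $s_{j'}^\prime$, while the remaining $2N-2$ interior times pair among themselves.

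The smallness is then extracted from those two surviving cross-pairings. Each produces a covariance $\hat{\mathfrak{C}}^\theta(s-s_j^\prime,k)=\hat{\mathfrak{C}}(\tfrac{\eta(s-s_j^\prime)}{\eps\theta},k)$, and the change of variables $\sigma_j=\tfrac{\eta(s-s_j^\prime)}{\eps\theta}$ provides a Jacobian $\eps\theta/\eta$; the remaining $\sigma_j$-integral (including the needed $\langle k\rangle$-weights) is controlled by assumption~\eqref{eqn:C_n_assump}. The two cross-pairings therefore together yield a factor of order $(\eps\theta/\eta)^2$. Combined with the prefactor $1/(\eta\eps\theta)$ of $\mathscr{R}_1^\theta$ and the $O(z^2)$ outer integration over $(s,s')$, this reproduces exactly the announced $\eps\theta/\eta^3$.

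For the remaining $2N-2$ internal pairings, I would proceed as in Lemma~\ref{lem:E_psi_theta_bounds}: the weights $\prod_m\langle v_m\rangle^\alpha$ are moved onto $\psi^\theta(0)$ through the block-shift matrices $A'_j$ of Lemma~\ref{lemma:chaos} via the pointwise bound $\langle v_m+[A'_j\vec k]_m\rangle^\alpha \le C^N\langle v_m\rangle^\alpha\prod_j\langle [\vec k]_j\rangle^\alpha$. The resulting $\langle k\rangle^\alpha$ weights are absorbed into the covariance factors using~\eqref{eqn:C_n_assump}, where the constraint $\alpha\le (d_0+1)/(p+q)$ is precisely what ensures $2+(p+q)\alpha\le d_0+3$. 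After symmetrizing the reduced simplex in the $2N-2$ internal times and resumming in $N$, one recovers the exponential $e^{\lambda/\eta^2}$ exactly as in the second estimate of Lemma~\ref{lem:E_psi_theta_bounds}.

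The main obstacle will be the combinatorial bookkeeping once $L^\theta$ is unfolded into its $p+q$ constituent shifts: one must track the ``off-diagonal'' cross-pairings (which supply the desired smallness) while verifying that the ``diagonal'' contributions (those producing the self-energy piece $-(p+q)/2\cdot\psi$ of $\mathcal{L}^\theta$) are correctly identified and do not contaminate the non-ladder estimate. A secondary technical point is making sure that the additional $\langle k\rangle$-moments needed to bound the two cross-pairings, on top of the $\langle k\rangle^{(p+q)\alpha}$ absorbed in Step~3, still fit within the margin allowed by \eqref{eqn:C_n_assump}. Once this is done cleanly, the remainder bound assembles by the three-step procedure above.
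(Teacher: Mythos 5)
Your first step (expanding $\psi^\theta(s')$ in the chaos series, applying Wick, and observing that the pairings in which $s$ pairs with $s'$ are exactly cancelled by the subtracted term, leaving only cross-pairings of $s$ and $s'$ with two distinct interior times) is the same as the paper's, and your third step (moving the $\langle v_m\rangle^\alpha$ weights onto $\psi^\theta(0)$ through $A_j'$ and resumming into $e^{\lambda/\eta^2}$) is also fine. The gap is in the middle step, where you claim the smallness: your power counting is inconsistent. The two cross-pairing Jacobians $(\eps\theta/\eta)^2$ do not by themselves produce any gain, because the cross-paired vertices carry their own normalization: the order-$2N$ chaos term has prefactor $(\eta\eps\theta)^{-N}$ and the outer $L^\theta L^\theta$ contributes another $(\eta\eps\theta)^{-1}$, so the block consisting of the two cross-pairings comes with $(\eta\eps\theta)^{-2}$, not $(\eta\eps\theta)^{-1}$. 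If, as you propose, you integrate out the two interior partners with Jacobian $(\eps\theta/\eta)^2$ and let the outer integration over $(s,s')$ contribute $O(z^2)$, you obtain $(\eta\eps\theta)^{-2}(\eps\theta/\eta)^2 z^2 = z^2/\eta^4$ for that block --- the same size as a generic term of $\mathbb{E}\psi^\theta$, with no factor $\eps\theta$ at all. Your arithmetic reaches $\eps\theta/\eta^3$ only by dropping the $(\eta\eps\theta)^{-1}$ attached to the two cross-paired interior vertices.

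The actual source of smallness is the simplex ordering, which your argument never uses. Since all interior times satisfy $s_m\le s'\le s$, the covariance $\hat{\mathfrak{C}}^\theta(s-s_m,\cdot)$ from the cross-pairing of $s$ forces $s-s_m$ to be of order $\eps\theta/\eta$, and the intermediate time $s'$ is trapped in $[s_m,s]$; hence the $s'$-integral contributes a length $s-s_m$ rather than $O(z)$. Concretely, one bounds $\frac{1}{\eta\eps\theta}\int_0^z\int_0^s (s-s_m)\,\mathfrak{C}^\theta_{p+q,\alpha}(s-s_m)\,\mathrm{d}s_m\,\mathrm{d}s \le \frac{\eps\theta z}{\eta^3}\int_{\mathbb{R}^{d+1}}|s|\langle k\rangle^{(p+q)\alpha}|\hat{\mathfrak{C}}(s,k)|\,\mathrm{d}s\,\mathrm{d}k$, which is where the finite first time-moment ($\langle s\rangle$ weight) in assumption \eqref{eqn:C_n_assump} is genuinely needed, while the other cross-pairing $(s',s_n)$ is estimated in the standard way by $\eta^{-2}$ times a $k$-moment of $\hat{\mathfrak{C}}$. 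It is this squeezing of one intermediate time integral --- the ``non-ladder'' feature of the surviving diagrams --- that yields the single factor $\eps\theta/\eta^3$ in \eqref{eqn:R1_theta_bound}; without it your scheme only reproduces the non-small bound of Lemma \ref{lem:E_psi_theta_bounds}.
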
 
 \begin{proof}
Plugging the chaos expansion \eqref{eqn:chaos_series}-\eqref{eqn:chaos_series2} in \eqref{eqn:R123} and using the Gaussian property {\color{black}
\begin{equation*}
    \begin{aligned}
        &\mathbb{E}[\hat{\nu}^\theta_s(\mathrm{d}k)\hat{\nu}^\theta_{s'}(\mathrm{d}k')\prod\limits_{j=1}^{2N}\hat{\nu}^\theta_{s_j}(\mathrm{d}k_j)]- \mathbb{E}[\hat{\nu}^\theta_s(\mathrm{d}k)\hat{\nu}^\theta_{s'}(\mathrm{d}k')]\mathbb{E}[\prod\limits_{j=1}^{2N}\hat{\nu}^\theta_{s_j}(\mathrm{d}k_j)]\\
        &=\sum\limits_{\substack{m,n=1\\ n\neq m}}^{2N}\mathbb{E}[\hat{\nu}^\theta_s(\mathrm{d}k)\hat{\nu}^\theta_{s_m}(\mathrm{d}k_m)]\mathbb{E}[\hat{\nu}^\theta_{s'}(\mathrm{d}k')\hat{\nu}^\theta_{s_n}(\mathrm{d}k_n)]\mathbb{E}[\prod_{\substack{j=1\\ j\neq m,n}}^{2N}\hat{\nu}^\theta_{s_j}(\mathrm{d}k_j)]\,,
    \end{aligned}
\end{equation*}}
we deduce that
{\color{black}
 \begin{equation*}
     \begin{aligned}
          \mathscr{R}^\theta_{1}(z,v)&=\sum\limits_{N\ge 1}\frac{1}{(\eta\eps\theta)^{N+1}}\int\limits_{0}^z\int\limits_{0}^s\int\limits_{[0,s']^{2N}_<}\int\limits_{\mathbb{R}^{(2N+2)d}}\sum_{\substack{m,n=1\\ n\neq m}}^{2N}{\color{black}\mathbb{E}[\hat{\nu}^\theta_s(\mathrm{d}k)\hat{\nu}^\theta_{s_m}(\mathrm{d}k_m)]\mathbb{E}[\hat{\nu}^\theta_{s'}(\mathrm{d}k')\hat{\nu}^\theta_{s_n}(\mathrm{d}k_n)]}\\
        &\times{\color{black}\mathbb{E}[\prod_{\substack{j=1\\ j\neq m,n}}^{2N}\hat{\nu}^\theta_{s_j}(\mathrm{d}k_j)]}\sum\limits_{j=1}^{(p+q)^{2N+2}}e^{\frac{i\eta }{\eps}G_j(\Vec{s},\Vec{k},v)}\psi^\theta(0,v-A_j\Vec{k})\frac{\mathrm{d}s'\mathrm{d}s}{(2\pi)^{2d}}\prod\limits_{j=1}^{2N}\frac{\mathrm{d}s_j}{(2\pi)^d}\,.
     \end{aligned}
 \end{equation*}}
Again using the Gaussian structure of the potential, we deduce using \eqref{eqn:chaos_2N} that
        \begin{equation*}
            \begin{aligned}
               & \|\prod\limits_{m=1}^{p+q}\langle v_m\rangle^\alpha \mathscr{R}^\theta_{1}(z,v)\|
                 \le   \sum\limits_{N\ge 1}\frac{1}{(\eta\eps\theta)^{N+1}}\int\limits_{0}^z\int\limits_{0}^s\int\limits_{[0,s']^{2N}_<}\int\limits_{\mathbb{R}^{(N+1)d}}\sum_{\substack{m,n=1\\ n\neq m}}^{2N}\sum\limits_{P^{2N-2}}\sum\limits_{j=1}^{(p+q)^{2N+2}}\|\prod\limits_{m'=1}^{p+q}\langle v_{m'}\rangle^\alpha \psi^\theta(0,v-A_j'\Vec{k})\| \\
       & \times{\color{black}|\hat{\mathfrak{C}}^\theta(s-s_m,k)||\hat{\mathfrak{C}}^\theta(s'-s_n,k')|}\frac{\mathrm{d}k\mathrm{d}k'}{(2\pi)^{2d}}\prod\limits_{\substack{(j',l')\in P^{2N-2}\\j',l'\neq m,n}}{\color{black}|\hat{\mathfrak{C}}^\theta(s_{j'}-s_{l'},k_{j'})|}\frac{\mathrm{d}k_{j'}}{(2\pi)^d}\prod\limits_{j=1}^{2N}\mathrm{d}s_j\mathrm{d}s'\mathrm{d}s\,.
            \end{aligned}
        \end{equation*}
        Note that as in~\eqref{eqn:v_m_alpha_prod1} 
        \begin{equation*}
            \begin{aligned}
                \|\prod\limits_{m=1}^{p+q}\langle v_m\rangle^\alpha \psi^\theta(0,v-A_j'\Vec{k})\|&=\|\prod\limits_{m=1}^{p+q}\langle v_m+\sum\limits_{l=1}^{N+1}[A_j']_{m,l}k_m\rangle^\alpha \psi^\theta(0,v)\|\,,
            \end{aligned}
        \end{equation*}
        with $k_{N},k_{N+1}$ taken as $k',k$ respectively. From Lemma~\ref{lemma:chaos}, $|[A_j']_{m,l}|\le C$ and we have as in~\eqref{eqn:v_m_alpha_prod2} that
        \begin{equation*}
          1\le  \langle v_m+\sum\limits_{l=1}^{N+1}[A_j']_{m,l}k_m\rangle\le C^{N+1}\langle v_m\rangle\prod\limits_{l=1}^{N+1}\langle k_l\rangle.
        \end{equation*}
        Using this, we have that $\|\prod\limits_{m=1}^{p+q}\langle v_m\rangle^\alpha \mathscr{R}^\theta_{1}(z,v)\|$ is bounded by
         \begin{equation*}
            \begin{aligned}
            &\sum\limits_{N\ge 1}\frac{C^{(N+1)(p+q)}(p+q)^{2N+2}}{(\eta\eps\theta)^{N+1}}\|\prod\limits_{m=1}^{p+q}\langle v_m\rangle^\alpha \psi^\theta(0,v)\|\\
            &\times\int\limits_{0}^z\int\limits_{0}^s\int\limits_{\mathbb{R}^{2d}}\sum_{\substack{m,n=1\\ n\neq m}}^{2N} ( \langle k\rangle \langle k'\rangle)^{(p+q)\alpha}{\color{black}|\hat{\mathfrak{C}}^\theta(s-s_m,k)||\hat{\mathfrak{C}}^\theta(s'-s_n,k')|}\frac{\mathrm{d}k\mathrm{d}k'}{(2\pi)^{2d}}\mathrm{d}s'\mathrm{d}s\\
        &\times\int\limits_{[0,s']^{2N}_<}\int\limits_{\mathbb{R}^{(N-1)d}}\sum\limits_{P^{2N-2}}\prod\limits_{\substack{(j',l')\in P^{2N-2}\\j',l'\neq m,n}}\langle k_{j'}\rangle^{(p+q)\alpha}{\color{black}|\hat{\mathfrak{C}}^\theta(s_{j'}-s_{l'},k_{j'})|}\frac{\mathrm{d}k_{j'}}{(2\pi)^d}\prod\limits_{j=1}^{2N}\mathrm{d}s_j\,.
            \end{aligned}
        \end{equation*}
        Let 
        $\mathfrak{C}_{n,\alpha}(s) =\int\limits_{\mathbb{R}^d}\langle k\rangle^{n\alpha}{\color{black}|\hat{\mathfrak{C}}(s,k)|}\frac{\mathrm{d}k}{(2\pi)^d}$
        with $\mathfrak{C}^\theta_{n,\alpha}(s)=\mathfrak{C}_{n,\alpha}\big(\frac{\eta s}{\eps\theta}\big)$.  Then we have that
                 \begin{equation*}
            \begin{aligned}
                \|\prod\limits_{m=1}^{p+q}\langle v_m\rangle^\alpha \mathscr{R}^\theta_{1}(z,v)\| \ \le\  \|\prod\limits_{m=1}^{p+q}\langle v_m\rangle^\alpha \psi^\theta(0,v)\|\sum\limits_{N\ge 1}\frac{C^{(N+1)(p+q)}(p+q)^{2N+2}}{(\eta\eps\theta)^{N+1}}
                \\ \times \int\limits_{0}^z\int\limits_{0}^s\int\limits_{[0,s']^{2N}_<}\sum_{\substack{m,n=1\\ n\neq m}}^{2N}\mathfrak{C}_{p+q,\alpha}^\theta(s-s_m)\mathfrak{C}_{p+q,\alpha}^\theta(s'-s_n)
                \sum\limits_{P^{2N-2}}\prod\limits_{\substack{(j,l)\in P^{2N-2}\\j,l\neq m,n}}\mathfrak{C}_{p+q,\alpha}^\theta(s_j-s_l)\prod\limits_{j=1}^{2N}\mathrm{d}s_j\mathrm{d}s'\mathrm{d}s\,.
            \end{aligned}
        \end{equation*}
        Again, symmetrizing the simplex leads to an expression of the form
                         \begin{equation}
            \begin{aligned}
                &\|\prod\limits_{m=1}^{p+q}\langle v_m\rangle^\alpha \mathscr{R}^\theta_{1}(z,v)\|\le \|\prod\limits_{m=1}^{p+q}\langle v_m\rangle^\alpha \psi^\theta(0,v)\|\sum\limits_{N\ge 1}\frac{C^{(N+1)(p+q)}(p+q)^{2N+2}}{(N-1)!}\\
                &\times \Big(\frac{1}{\eta\eps\theta}\int\limits_{[0,z]^2}\mathfrak{C}^\theta_{p+q,\alpha}(s-s')\mathrm{d}s'\mathrm{d}s\Big)^{N-1}
                \Big(\frac{1}{(\eta\eps\theta)^2}\int\limits_{0}^z\int\limits_{0}^s\int\limits_{[0,s']^{2}}\mathfrak{C}_{p+q,\alpha}^\theta(s-s_1)\mathfrak{C}_{p+q,\alpha}^\theta(s'-s_2)\mathrm{d}s_2\mathrm{d}s_1\mathrm{d}s'\mathrm{d}s\Big).
            \end{aligned}
        \end{equation}
        As before, we have that
        \begin{equation*}
            \frac{1}{\eta\eps\theta}\int\limits_{[0,z]^2}\mathfrak{C}^\theta_{p+q,\alpha}(s-s')\mathrm{d}s'\mathrm{d}s\le \frac{z}{\eta^2}\int\limits_{\mathbb{R}}\mathfrak{C}_{p+q,\alpha}(s)\mathrm{d}s=\frac{z}{\eta^2}{\color{black}\int_{\mathbb{R}^{d+1}}\langle k\rangle^{(p+q)\alpha}|\hat{C}(s,k)|\mathrm{d}s\mathrm{d}k}.
        \end{equation*}
        On the other hand,
        \begin{equation*}
            \begin{aligned}
               &\frac{1}{(\eta\eps\theta)^2}\int\limits_{0}^z\int\limits_{0}^s\int\limits_{[0,s']^{2}}\mathfrak{C}_{p+q,\alpha}^\theta(s-s_1)\mathfrak{C}_{p+q,\alpha}^\theta(s{\color{black}'}-s_2)\mathrm{d}s_2\mathrm{d}s_1\mathrm{d}s'\mathrm{d}s \\
               &\le \Big(\frac{1}{\eta\eps\theta}\int\limits_{0}^z\int\limits_{0}^s\int\limits_{0}^{s'}\mathfrak{C}_{p+q,\alpha}^\theta(s-s_1)\mathrm{d}s_1\mathrm{d}s'\mathrm{d}s\Big)\Big(\frac{1}{\eta^2}\int\limits_{\mathbb{R}}\mathfrak{C}_{p+q,\alpha}(s)\mathrm{d}s\Big)\\
                 &={\color{black}\Big(\frac{1}{\eta\eps\theta}\int\limits_{0}^z\int\limits_{0}^s\int\limits_{s_1}^s\mathfrak{C}^\theta_{p+q,\alpha}(s-s_1)\mathrm{d}s'\mathrm{d}s_1\mathrm{d}s\Big)\Big(\frac{1}{\eta^2}\int_{\mathbb{R}^{d+1}}\langle k\rangle^{(p+q)\alpha}|\hat{C}(s,k)|\mathrm{d}s\mathrm{d}k\Big)}.
            \end{aligned}
        \end{equation*}
So, we deduce that the above integral is bounded as
        \begin{equation*}
            \begin{aligned}
               & {\color{black}\Big(\frac{1}{\eta\eps\theta}\int\limits_{0}^z\int\limits_{0}^s(s-s_1)\mathfrak{C}^\theta_{p+q,\alpha}(s_1)\mathrm{d}s_1\mathrm{d}s\Big)\Big(\frac{1}{\eta^2}\int_{\mathbb{R}^{d+1}}\langle k\rangle^{(p+q)\alpha}|\hat{C}(s,k)|\mathrm{d}s\mathrm{d}k\Big)}\\
                &{\color{black}\le\Big(\frac{\eps\theta z}{\eta^3}\int\limits_{\mathbb{R}^{d+1}}|s|\langle k\rangle^{(p+q)\alpha} |\hat{C}(s,k)|\mathrm{d}s\mathrm{d}k\Big)\Big(\frac{1}{\eta^2}\int_{\mathbb{R}^{d+1}}\langle k\rangle^{(p+q)\alpha}|\hat{C}(s,k)|\mathrm{d}s\mathrm{d}k\Big)}\,.
            \end{aligned}
        \end{equation*}
        Together, these lead to a bound of the form
        \begin{equation*}
            \begin{aligned}
                &\|\prod\limits_{m=1}^{p+q}\langle v_m\rangle^\alpha \mathscr{R}^\theta_{1}(z,v)\|\\
                &\le \frac{\eps\theta}{\eta^3}\|\prod\limits_{m=1}^{p+q}\langle v_m\rangle^\alpha \psi^\theta(0,v)\|\sum\limits_{N\ge 1}\frac{(p+q)^{2N+2}}{(N-1)!}C^{(N+1)(p+q)}\frac{z^N}{\eta^{2N}} \Big({\color{black}\int\limits_{\mathbb{R}^{d+1}}\langle s\rangle\langle k\rangle^{(p+q)\alpha} |\hat{C}(s,k)|\mathrm{d}s\mathrm{d}k}\Big)^N\\
                &\le c_1(p,q,z)\frac{\eps\theta }{\eta^3}\|\prod\limits_{m=1}^{p+q}\langle v_m\rangle^\alpha \psi^\theta(0,v)\|e^{\frac{\lambda[{\color{black}C},p,q,z,\alpha]}{\eta^2}} \,.
            \end{aligned}
        \end{equation*}
               This leads to~\eqref{eqn:R1_theta_bound} and concludes the derivation of the first estimate.
\end{proof}

\begin{lemma}\label{lemma:R2_theta_bound} We have the following second estimate:
    \begin{equation}\label{eqn:R2_theta_bound}
        \|\prod\limits_{m=1}^{p+q}\langle v_m\rangle^\alpha \mathscr{R}^\theta_{2}(z,v)\|\le c_2(p,q,z)\frac{\theta}{\eta^2}e^{\frac{\lambda[{\color{black}C},p,q,z,\alpha]}{\eta^2}} \|\prod\limits_{m=1}^{p+q}\langle v_m\rangle^{1+\alpha}\psi^\theta(0)\|\,,
    \end{equation}
      with $\lambda$ as in~\eqref{eqn:lambda_def} and $0\le\alpha\le \frac{d_0+1}{p+q}$. Here, $c_2$ is a constant independent of $\theta$. 
\end{lemma}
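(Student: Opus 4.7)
The plan is to exploit the observation that the covariance factor $\hat{\mathfrak{C}}^\theta(\pm(s-s'),k)$ appearing in the two $LL$ kernels is the same up to the symmetry $k\mapsto -k$, so that $\mathscr{R}^\theta_2$ is built entirely from differences of oscillating phases. Using the explicit formula \eqref{eqn:LL_theta_expnsn}, each term of $\mathbb{E}[L^\theta(s',\nu^\theta_{s'})L^\theta(s',\nu^\theta_s)]-\mathbb{E}[L^\theta(s,\nu^\theta_s)L^\theta(s',\nu^\theta_{s'})]$ has the form $(e^{i\eta a/\eps}-e^{i\eta b/\eps})$ times a common covariance factor and a shifted $\psi$. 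In every term, the exponents differ only through whether the prefactor of a single $g(\xi_j,k)$ or $g(\zeta_l,k)$ is $s$ or $s'$: for the cross sums the difference factors, after pulling out a common phase, as $e^{\frac{i\eta s}{\eps}g(\xi_j,k)}-e^{\frac{i\eta s'}{\eps}g(\xi_j,k)}$, while the ``same-index'' terms collapse via the identity $g(\xi,k)+g(\xi-k,-k)=0$ (already invoked in the proof of Theorem \ref{thm:psi_theta_decomp}), so that the first kernel contributes phase $1$ while the second contributes $e^{\frac{i\eta(s-s')}{\eps}g(\xi_j,k)}$.

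Next I would apply the fundamental theorem of calculus to rewrite each such difference as $\frac{i\eta(s-s')}{\eps}g(\xi_\ast,k)\int_0^1 e^{\frac{i\eta[ts+(1-t)s']}{\eps}g(\xi_\ast,k)}\mathrm{d}t$, whose modulus is bounded by $\frac{\eta|s-s'|}{\eps}|g(\xi_\ast,k)|\le C\frac{\eta|s-s'|}{\eps}\langle k\rangle^2\langle v_\ast\rangle$ for a single coordinate $v_\ast\in\{\xi_j,\zeta_l\}$. This replaces the oscillatory difference by an integrable factor $\langle k\rangle^2\langle v_\ast\rangle$ together with the time weight $\frac{\eta}{\eps}|s-s'|$. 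Using the crude bound $\langle v_\ast\rangle\le\prod_m\langle v_m\rangle$ and combining with the original weight $\prod\langle v_m\rangle^\alpha$ produces the expected $\prod_m\langle v_m\rangle^{1+\alpha}$ prefactor on the right-hand side of \eqref{eqn:R2_theta_bound}.

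To close the bound I would then proceed exactly as in the proof of Lemma \ref{lemma:R1_theta_bound}: substitute the chaos expansion of $\mathbb{E}\psi^\theta(s')$ from \eqref{eqn:chaos_2N}, translate every $v$-shift produced by the $L^\theta L^\theta$ action onto the initial data using $\langle v_m+[A'_j\vec{k}]_m\rangle\le C\langle v_m\rangle\langle k\rangle$ (i.e.\ inequality \eqref{eqn:v_m_alpha_prod2}), and thereby reduce the estimate to bounding $\|\prod_m\langle v_m\rangle^{1+\alpha}\psi^\theta(0)\|$ times a sum over Wick pairings. Applying Lemma \ref{lem:E_psi_theta_bounds} with weights $\alpha$ everywhere plus an extra $\langle v_\ast\rangle$ on a single coordinate then yields a bound of the expected form; the cumulative $k$-weights appear as $\langle k\rangle^{2+(p+q)\alpha}$ under $|\hat{\mathfrak{C}}|$, consistent with the definition of $\lambda$ in \eqref{eqn:lambda_def} and with the integrability condition \eqref{eqn:C_n_assump} under the constraint $(p+q)\alpha\le d_0+1$.

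It remains to verify the $\theta/\eta^2$ scaling. The prefactor $\frac{1}{\eta\eps\theta}$ from the definition of $\mathscr{R}^\theta_2$, the extracted Taylor factor $\frac{\eta}{\eps}|s-s'|$, and the covariance $\hat{\mathfrak{C}}^\theta(s-s',k)=\hat{\mathfrak{C}}\!\left(\frac{\eta(s-s')}{\eps\theta},k\right)$ combine under the substitution $\tau=\frac{\eta(s-s')}{\eps\theta}$ (which produces a Jacobian $\frac{\eps\theta}{\eta}$ on $\mathrm{d}s'$ and gives $|s-s'|=\frac{\eps\theta}{\eta}|\tau|$) to yield the overall scaling $\frac{1}{\eta\eps\theta}\cdot\frac{\eta}{\eps}\cdot\frac{\eps\theta}{\eta}\cdot\frac{\eps\theta}{\eta}=\frac{\theta}{\eta^2}$, multiplied by a bounded integral $\int\langle\tau\rangle|\hat{\mathfrak{C}}(\tau,k)|\langle k\rangle^{2+(p+q)\alpha}\,\mathrm{d}\tau\,\mathrm{d}k$ absorbed into $\lambda$. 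The main nuisance I anticipate is the book-keeping across the several phase-difference cases in \eqref{eqn:LL_theta_expnsn}, especially verifying that each case (and in particular the ``same-index'' one) yields a factor of exactly one $\langle v_m\rangle$ from $|g|$; once this is in place, the $N$-sum over the chaos expansion is handled by symmetrizing the simplex and summing a $\frac{1}{N!}$-series as in Lemma \ref{lemma:R1_theta_bound}, producing the claimed estimate \eqref{eqn:R2_theta_bound}.
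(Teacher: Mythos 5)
Your proposal is correct and follows essentially the same route as the paper: factor out a common phase so each mismatch becomes $1-e^{\frac{i\eta(s-s')}{\eps}g}$ (with the same-index terms handled via $g(\xi,k)+g(\xi-k,-k)=0$), bound it linearly by $\frac{\eta|s-s'|}{\eps}|g|\le C\frac{\eta|s-s'|}{\eps}\langle k\rangle^2\langle v_\ast\rangle$, rescale $s-s'$ by $\frac{\eps\theta}{\eta}$ to extract $\frac{\theta}{\eta^2}$ together with the $\langle s\rangle$-weighted covariance integral of \eqref{eqn:C_n_assump}, and close with Lemma \ref{lem:E_psi_theta_bounds} (the paper puts the extra unit weights on two shifted coordinates, taking $\alpha'=1$ there, which is what your ``extra $\langle v_\ast\rangle$ on a single coordinate'' amounts to). Only make sure the intermediate remark $\langle v_\ast\rangle\le\prod_m\langle v_m\rangle$ is not actually used inside the moment bound—putting an extra unit weight on every coordinate would violate the constraint $(p+q)\alpha\le d_0+1$—but your subsequent, correct application of Lemma \ref{lem:E_psi_theta_bounds} makes this harmless.
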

\begin{proof}
        From~\eqref{eqn:R123} and~\eqref{eqn:LL_theta_expnsn}, we have
        \begin{equation*}
            \begin{aligned}
                &\mathscr{R}^\theta_{2}(z,v)=\frac{1}{\eta\eps\theta}\int\limits_{0}^z\int\limits_{0}^s\int\limits_{\mathbb{R}^d}\hat{\mathfrak{C}}^\theta(s-s',k)\\
                &\times\Big[\sum\limits_{j,l}\big(2e^{\frac{i\eta}{\eps}s'[g(\xi_j,k)-g(\zeta_l,k)]}-e^{\frac{i\eta }{\eps}[sg(\xi_j,k)-s'g(\zeta_l,k)]}-e^{-\frac{i\eta}{\eps}[sg(\zeta_l,k)-s'g(\xi_j,k)]}\big)\mathbb{E}\psi^\theta(s',\xi_j-k,\zeta_l-k)\\
                &-\sum\limits_{j\neq j'}\big(e^{\frac{i\eta s'}{\eps}[g(\xi_j,k)+g(\xi_{j'},-k)]}-e^{\frac{i\eta }{\eps}[sg(\xi_j,k)+s'g(\xi_{j'},-k)]}\big)\mathbb{E}\psi^\theta(s',\xi_j-k,\xi_{j'}+k)\\
                &-\sum\limits_{l\neq l'}\big(e^{-\frac{i\eta s'}{\eps}[g(\zeta_l,k)+g(\zeta_{l'},-k)]}-e^{-\frac{i\eta }{\eps}[sg(\zeta_l,k)+s'g(\zeta_{l'},-k)]}\big)\mathbb{E}\psi^\theta(s',\zeta_l-k,\zeta_{l'}+k)\\
                &-\sum\limits_{j}\big(1-e^{\frac{i\eta}{\eps}[sg(\xi_j,k)+s'g(\xi_j-k,-k)]}\big)\mathbb{E}\psi^\theta(s')-\sum\limits_{l}\big(1-e^{-\frac{i\eta}{\eps}[sg(\zeta_l,k)+s'g(\zeta_l-k,-k)]}\big)\mathbb{E}\psi^\theta(s')\Big]\frac{\mathrm{d}k}{(2\pi)^d}\mathrm{d}s'\mathrm{d}s\,.
            \end{aligned}
        \end{equation*}
        Switching the order of integration of $s'$ and $s$ and performing a change of variables $\frac{\eta(s-s')}{\eps\theta}\to s$ gives
        \begin{equation*}
            \begin{aligned}
                \mathscr{R}^\theta_{2}(z,v)&=\frac{1}{\eta^2}\int\limits_{0}^z\int\limits_{0}^{\frac{\eta(z-s')}{\eps\theta}}\int\limits_{\mathbb{R}^d}\hat{\mathfrak{C}}(s,k)\\
                &\times\Big[\sum\limits_{j,l}e^{\frac{i\eta}{\eps}s'[g(\xi_j,k)-g(\zeta_l,k)]}\big(2-e^{i\theta s g(\xi_j,k)}-e^{-i\theta sg(\zeta_l,k)}\big)\mathbb{E}\psi^\theta(s',\xi_j-k,\zeta_l-k)\\
                &-\sum\limits_{j\neq j'}e^{\frac{i\eta s'}{\eps}[g(\xi_j,k)+g(\xi_{j'},-k)]}\big(1-e^{i\theta sg(\xi_j,k)}\big)\mathbb{E}\psi^\theta(s',\xi_j-k,\xi_{j'}+k)\\
                &-\sum\limits_{l\neq l'}e^{-\frac{i\eta s'}{\eps}[g(\zeta_l,k)+g(\zeta_{l'},-k)]}\big(1-e^{-i\theta sg(\zeta_l,k)}\big)\mathbb{E}\psi^\theta(s',\zeta_l-k,\zeta_{l'}+k)\\
                &-\sum\limits_{j}\big(1-e^{i\theta sg(\xi_j,k)}\big)\mathbb{E}\psi^\theta(s')-\sum\limits_{l}\big(1-e^{-i\theta sg(\zeta_l,k)}\big)\mathbb{E}\psi^\theta(s')\Big]\frac{\mathrm{d}k}{(2\pi)^d}\mathrm{d}s\mathrm{d}s'\,.
            \end{aligned}
        \end{equation*}
        From the above expression, we have that
            \begin{equation*}
            \begin{aligned}
               &\|\prod\limits_{m=1}^{p+q}\langle v_m\rangle^\alpha \mathscr{R}^\theta_{2}(z,v)\|\\
               &\le\frac{\theta}{\eta^2}\int\limits_{0}^z\int\limits_{0}^{\frac{\eta(z-s')}{\eps\theta}}\int\limits_{\mathbb{R}^d}|s|{\color{black}|\hat{\mathfrak{C}}(s,k)|}\int\limits_{\mathbb{R}^{(p+q)d}}\Big[\sum\limits_{j,l}\prod\limits_{m=1}^{p+q}\langle v_m\rangle^\alpha\big(|g(\xi_j,k)|+|g(\zeta_l,k)|\big)|\mathbb{E}\psi^\theta(s',\xi_j-k,\zeta_l-k)|\\
                &+\sum\limits_{j\neq j'}\prod\limits_{m=1}^{p+q}\langle v_m\rangle^\alpha|g(\xi_j,k)||\mathbb{E}\psi^\theta(s',\xi_j-k,\xi_{j'}+k)|+\sum\limits_{l\neq l'}\prod\limits_{m=1}^{p+q}\langle v_m\rangle^\alpha|g(\zeta_l,k)||\mathbb{E}\psi^\theta(s',\zeta_l-k,\zeta_{l'}+k)|\\
                &+\sum\limits_{j}\prod\limits_{m=1}^{p+q}\langle v_m\rangle^\alpha|g(\xi_j,k)||\mathbb{E}\psi^\theta(s')|+\sum\limits_{l}\prod\limits_{m=1}^{p+q}\langle v_m\rangle^\alpha|g(\zeta_l,k)||\mathbb{E}\psi^\theta(s')|\Big]\frac{\mathrm{d}k}{(2\pi)^d}\mathrm{d}s\mathrm{d}s'\,.
            \end{aligned}
        \end{equation*}
        For a fixed $(j,l)$ in the first sum, we make a change of variables $\xi_j-k\to \xi_j, \zeta_l-k\to \zeta_l$. This transformation leads to a product of the form
        \begin{equation*}
            \Big(\prod_{\substack{m=1\\ m\neq j,l}}^{2N}\langle v_m\rangle^\alpha\Big)\langle \xi_j+k\rangle^\alpha\langle \zeta_l+k\rangle^\alpha\Big(|g(\xi_j+k,k)|+|g(\zeta_l+k,k)|\Big).
        \end{equation*}
        Since $1\le  \langle \xi_j+k\rangle\langle \zeta_l+k\rangle\le 2\langle \xi_j\rangle\langle \zeta_l\rangle\langle k\rangle^2$
        and
        \begin{equation*}
            \begin{aligned}
               |g(\xi_j+k,k)|+|g(\zeta_l+k,k)|&\le 2|k|^2+2|k|(|\xi_j|+|\zeta_l|)\le 2\langle k\rangle^2 +2\langle k\rangle(\langle \xi_j\rangle+ \langle \zeta_l\rangle)\le C\langle k\rangle^2\langle \xi_j\rangle \langle \zeta_l\rangle\,,
            \end{aligned}
        \end{equation*}
        we have that
          \begin{equation*}
            \begin{aligned}
              & \|\prod\limits_{m=1}^{p+q}\langle v_m\rangle^\alpha \mathscr{R}^\theta_{2}(z,v) \| \\
              &\le C\frac{\theta}{\eta^2}\int\limits_{0}^z\int\limits_{0}^{\frac{\eta(z-s')}{\eps\theta}}\int\limits_{\mathbb{R}^d}|s|\langle k\rangle^{2+2\alpha}{\color{black}|\hat{\mathfrak{C}}(s,k)|} \|\sum\limits_{\substack{n,n'=1\\ n\neq n'}}^{p+q}\langle v_n\rangle\langle v_{n'}\rangle\Big(\prod\limits_{m=1}^{p+q}\langle v_m\rangle^\alpha\Big)\mathbb{E}\psi^\theta(s',v)\|\frac{\mathrm{d}k}{(2\pi)^d}\mathrm{d}s\mathrm{d}s'\\
               &\le Cz\frac{\theta}{\eta^2}\sup\limits_{0\le s\le z} \|\sum\limits_{\substack{n,n'=1\\ n\neq n'}}^{p+q}\langle v_n\rangle\langle v_{n'}\rangle\Big(\prod\limits_{m=1}^{p+q}\langle v_m\rangle^\alpha\Big)\mathbb{E}\psi^\theta(s,v)\|{\color{black}\int\limits_{\mathbb{R}^{d+1}}|s|\langle k\rangle^{2+2\alpha}|\hat{C}(s,k)|\mathrm{d}s\mathrm{d}k}\,.
            \end{aligned}
        \end{equation*}
    The bound in~\eqref{eqn:R2_theta_bound} now follows from assumption~\eqref{eqn:C_n_assump} and using Lemma \ref{lem:E_psi_theta_bounds} applied to
    \[
        \|\sum\limits_{\substack{n,n'=1\\ n\neq n'}}^{p+q}\langle v_n\rangle\langle v_{n'}\rangle\Big(\prod\limits_{m=1}^{p+q}\langle v_m\rangle^\alpha\Big)\mathbb{E}\psi^\theta(s,v)\|.
    \]
 This concludes the proof of the Lemma.
\end{proof}
The final error term in \eqref{eqn:R_theta_bound} is bounded as follows:
\begin{lemma}\label{lemma:R3_theta_bound} We have the following third estimate:
    \begin{equation}\label{eqn:R3_theta_bound}
        \|\prod\limits_{m=1}^{p+q}\langle v_m\rangle^\alpha \mathscr{R}^\theta_3(z,v)\|\le c_3(p,q)\frac{\eps\theta}{\eta^3}\|\prod\limits_{m=1}^{p+q}\langle v_m\rangle^{\alpha} \psi^\theta(0,v)\|e^{\frac{\lambda[{\color{black}C},p,q,z,\alpha]}{\eta^2}} \,,
    \end{equation}
  with $\lambda$ as in~\eqref{eqn:lambda_def} and $0\le\alpha\le \frac{d_0+1}{p+q}$. 
    \begin{proof}
        From~\eqref{eqn:R123} and~\eqref{eqn:LL_theta_expnsn}, we have that
        \begin{equation*}
            \begin{aligned}
                \|\prod\limits_{m=1}^{p+q}\langle v_m\rangle^\alpha \mathscr{R}^\theta_3(z,v)\|&\le\frac{1}{\eta\eps\theta}\int\limits_{z}^\infty\int\limits_{0}^z\int\limits_{\mathbb{R}^d}{\color{black}|\hat{\mathfrak{C}}^\theta(s-s',k)|}\Big[2\sum\limits_{j,l}\|\prod\limits_{m=1}^{p+q}\langle v_m\rangle^\alpha \mathbb{E}\psi^\theta(s',\xi_j-k,\zeta_l-k)\|\\
                &+\sum\limits_{j\neq j'}\|\prod\limits_{m=1}^{p+q}\langle v_m\rangle^\alpha \mathbb{E}\psi^\theta(s',\xi_j-k,\xi_{j'}+k)\|+\sum\limits_{l\neq l'}\|\prod\limits_{m=1}^{p+q}\langle v_m\rangle^\alpha \mathbb{E}\psi^\theta(s',\zeta_l-k,\zeta_{l'}+k)\|\\
                &+(p+q)\|\prod\limits_{m=1}^{p+q}\langle v_m\rangle^\alpha \mathbb{E}\psi^\theta(s',v)\|\Big]\frac{\mathrm{d}k}{(2\pi)^d}\mathrm{d}s'\mathrm{d}s\,.
            \end{aligned}
        \end{equation*}
        As before, making an appropriate change of variables gives for instance
        \begin{equation*}
            \|\prod\limits_{m=1}^{p+q}\langle v_m\rangle^\alpha \mathbb{E}\psi^\theta(s',\xi_j-k,\zeta_l-k)\|= \|\prod\limits_{m=1, m\neq j, p+l}^{p+q}\langle v_m\rangle^\alpha\langle\xi_j+k\rangle^\alpha\langle\zeta_l+k\rangle^\alpha \mathbb{E}\psi^\theta(s',v)\|,
        \end{equation*}
        which is bounded by $C\langle k\rangle^{2\alpha}\|\prod\limits_{m=1}^{p+q}\langle v_m\rangle^\alpha\mathbb{E}\psi^\theta(s',v)\|$.
        This gives
        \begin{equation*}
            \begin{aligned}
                \|\prod\limits_{m=1}^{p+q}\langle v_m\rangle^\alpha \mathscr{R}^\theta_3(z,v)\|&\le\frac{(p+q)^2}{\eta\eps\theta}\sup\limits_{0\le s\le z}\|\prod\limits_{m=1}^{p+q}\langle v_m\rangle^\alpha\mathbb{E}\psi^\theta(s,v)\|{\color{black}\int\limits_{0}^z\int\limits_{z}^\infty\int\limits_{\mathbb{R}^d}\langle k\rangle^{2\alpha}|\hat{\mathfrak{C}}^\theta(s-s',k)|\frac{\mathrm{d}k}{(2\pi)^d}\mathrm{d}s\mathrm{d}s'}\,.
            \end{aligned}
        \end{equation*}
{\color{black}After a change of variables $s-s'\to s$, we have
\begin{equation*}
    \begin{aligned}
        \int_0^z\int_z^\infty |\hat{C}^\theta(s-s',\cdot)|\mathrm{d}s\mathrm{d}s'&=\Big(\int_0^z\int_{z-s'}^z+\int_0^z\int_z^\infty\Big)|\hat{C}^\theta(s,k)|\mathrm{d}s\mathrm{d}s'\\
        &=\int_0^zs|\hat{C}^\theta(s,\cdot)|\mathrm{d}s+z\int_z^\infty|\hat{C}^\theta(s,\cdot)|\mathrm{d}s\le \int_0^\infty s|\hat{C}^\theta(s,\cdot)|\mathrm{d}s\,.
    \end{aligned}
\end{equation*}
From a change of variables $\frac{\eta s}{\eps\theta}\to s$, we have
              \begin{equation*}
            \begin{aligned}
                \|\prod\limits_{m=1}^{p+q}\langle v_m\rangle^\alpha \mathscr{R}^\theta_3(z,v)\|&\le C(p+q)^2\frac{\eps\theta}{\eta^3}\sup\limits_{0\le s\le z}\|\prod\limits_{m=1}^{p+q}\langle v_m\rangle^\alpha\mathbb{E}\psi^\theta(s,v)\|\int\limits_{\mathbb{R}^{d+1}}|s|\langle k\rangle^{2\alpha}|\hat{C}(s,k)|\mathrm{d}s\mathrm{d}k\,.
            \end{aligned}
        \end{equation*}
        }
              The bound in~\eqref{eqn:R3_theta_bound} now follows from using Lemma \ref{lem:E_psi_theta_bounds} in the above expression. 
        This concludes the derivation of the third estimate and hence the proof of Theorem \ref{thm:psi_theta_decomp}.
    \end{proof}
\end{lemma}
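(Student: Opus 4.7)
\textbf{Proof plan for Lemma~\ref{lemma:R3_theta_bound}.} The plan is to unfold $\mathscr{R}^\theta_3$ by substituting the explicit formula \eqref{eqn:LL_theta_expnsn} for $\mathbb{E}[L^\theta L^\theta]$, absorb the $\pm k$ shifts in the arguments of $\mathbb{E}\psi^\theta$ into the weights $\langle v_m\rangle^\alpha$ via changes of variables, bound the resulting weighted moment of $\mathbb{E}\psi^\theta$ by Lemma~\ref{lem:E_psi_theta_bounds}, and then extract the small factor $\eps\theta/\eta^3$ from the time-momentum integral over the region $\{0\le s'\le z\le s\}$.

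First, substituting \eqref{eqn:LL_theta_expnsn} into the definition of $\mathscr{R}^\theta_3$ in \eqref{eqn:R123} and observing that every phase factor has modulus one, I would get a bound of the form
\begin{equation*}
\Big\|\prod_{m=1}^{p+q}\langle v_m\rangle^\alpha\mathscr{R}^\theta_3(z,v)\Big\|\le \frac{1}{\eta\eps\theta}\int_z^\infty\!\int_0^z\!\int_{\mathbb{R}^d}|\hat{\mathfrak{C}}^\theta(s-s',k)|\sum_{\mathrm{terms}}\Big\|\prod_{m}\langle v_m\rangle^\alpha\mathbb{E}\psi^\theta(s',v-A\vec k)\Big\|\,\frac{\rd k\,\rd s'\,\rd s}{(2\pi)^d},
\end{equation*}
where the inner sum runs over the finitely many terms in \eqref{eqn:LL_theta_expnsn}, each involving at most two coordinates of $v$ shifted by $\pm k$. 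Changing variables in the shifted coordinates and using the elementary inequality $\langle v+k\rangle^\alpha\le 2^{\alpha/2}\langle v\rangle^\alpha\langle k\rangle^\alpha$, each shifted norm is controlled by $C\langle k\rangle^{2\alpha}\|\prod_m\langle v_m\rangle^\alpha\mathbb{E}\psi^\theta(s',v)\|$ (the factor $\langle k\rangle^{2\alpha}$ covering the worst case of two simultaneous shifts). Pulling the supremum over $s'\in[0,z]$ outside the integral and applying Lemma~\ref{lem:E_psi_theta_bounds} majorises this supremum by $\|\prod_m\langle v_m\rangle^\alpha\psi^\theta(0)\|\,e^{\lambda[C,p,q,z,\alpha]/\eta^2}$.

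The remaining work is the scalar time-momentum integral $\frac{1}{\eta\eps\theta}\int_z^\infty\int_0^z |\hat{\mathfrak{C}}^\theta(s-s',k)|\,\rd s'\,\rd s$. Since $s\ge z\ge s'$ one has $s-s'\ge 0$, so after the substitution $s-s'\mapsto s$ and interchanging the order of integration, this integral is bounded by
\begin{equation*}
\int_0^z s\,|\hat{\mathfrak{C}}^\theta(s,k)|\rd s+z\int_z^\infty|\hat{\mathfrak{C}}^\theta(s,k)|\rd s \le \int_0^\infty s\,|\hat{\mathfrak{C}}^\theta(s,k)|\rd s.
\end{equation*}
A further rescaling $\eta s/(\eps\theta)\mapsto s$ produces a factor $(\eps\theta/\eta)^2\int_{\mathbb{R}}|s||\hat{\mathfrak{C}}(s,k)|\rd s$, so together with the prefactor $1/(\eta\eps\theta)$ and the residual $\langle k\rangle^{2\alpha}\,\rd k$ integration (finite by \eqref{eqn:C_n_assump} provided $2\alpha\le d_0+1$, which holds when $\alpha\le (d_0+1)/(p+q)$ since $p+q\ge 2$), one recovers precisely the advertised factor $\eps\theta/\eta^3$ together with a constant $c_3(p,q)$.

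The chief obstacle is obtaining the correct power $\eps\theta/\eta^3$ rather than the weaker $1/\eta^2$. A naive bound replacing $\int_z^\infty|\hat{\mathfrak{C}}^\theta(s-s',\cdot)|\rd s$ by $\int_{\mathbb{R}}|\hat{\mathfrak{C}}^\theta(s,\cdot)|\rd s$ only uses the zeroth temporal moment and, after rescaling, gives an insufficient factor $\eps\theta/\eta$ combining to $1/\eta^2$. The improvement hinges on the observation that the integration region $\{s\ge z\ge s'\}$ forces $s-s'\ge s-z$, thereby legitimately introducing a factor of $s$ inside the integral and invoking the first-moment-in-$s$ integrability built into the decorrelation assumption \eqref{eqn:C_n_assump}; this is what produces the extra $\eps\theta/\eta$ gain over the naive estimate and yields the required $O(\eps\theta/\eta^3)$ bound.
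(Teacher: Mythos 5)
Your proposal is correct and follows essentially the same route as the paper: unfold $\mathscr{R}^\theta_3$ via \eqref{eqn:LL_theta_expnsn}, absorb the $\pm k$ shifts into the weights to produce the $\langle k\rangle^{2\alpha}$ factor, control $\sup_{s'\le z}\|\prod_m\langle v_m\rangle^\alpha\mathbb{E}\psi^\theta(s')\|$ by Lemma~\ref{lem:E_psi_theta_bounds}, and extract $\eps\theta/\eta^3$ from $\int_0^z\int_z^\infty|\hat{\mathfrak{C}}^\theta(s-s',\cdot)|\le\int_0^\infty s|\hat{\mathfrak{C}}^\theta(s,\cdot)|\,\mathrm{d}s$ after rescaling. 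Your closing remark correctly identifies the same mechanism the paper uses: the restricted region $\{s\ge z\ge s'\}$ yields the first temporal moment of $\hat{\mathfrak{C}}$, which is precisely where the extra $\eps\theta/\eta$ gain comes from.
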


\subsection{Proof of the tightness result}\label{subsec:tightness_proof}
\begin{proof}[Proof of Theorem~\ref{thm:tightness}]
        We follow a similar strategy to the proof of \cite[Theorem 2.7]{bal2024complex}. Let $\delta_h\phi^\theta(z,r,x):=\phi^\theta(z,r,x+h)-\phi^\theta(z,r,x)$.
        Then, 
        \begin{equation*}
        \begin{aligned}
                       & \mathbb{E}|\phi^\theta(z,r,x+h)-\phi^\theta(z,r,x)|^{2n}=\mathbb{E}|\delta_h\phi^\theta(z,r,x)|^{2n}\\
                        &=\int\limits_{\mathbb{R}^{2nd}}\prod\limits_{j=1}^{n}[(e^{i\xi_j\cdot h}-1)(e^{-i\zeta_l\cdot h}-1)]\mathbb{E}[\prod\limits_{j=1}^n\hat{\phi}^\theta(z,r,\xi_j)\prod\limits_{l=1}^n\hat{\phi}^{\theta *}(z,r,\zeta_l)]e^{i\sum_{j=1}^n(\xi_j-\zeta_j)\cdot x}\prod\limits_{j=1}^n\frac{\mathrm{d}\xi_j\mathrm{d}\zeta_j}{(2\pi)^{2d}}\,.
        \end{aligned}
        \end{equation*}
        Note that
        \begin{equation*}
            \hat{\phi}^\theta(z,r,\xi) =\int\limits_{\mathbb{R}^d}u^\theta(z,\eps^{-\beta}r+\eta x)e^{-i\xi\cdot x}\mathrm{d}x=\frac{1}{\eta^d}\hat{u}^\theta(z,\eta^{-1}\xi)e^{\frac{i}{\eps^\beta\eta}r\cdot\xi}\,.
        \end{equation*}
    From Theorem \ref{thm:psi_theta_decomp}, we deduce that
    \begin{equation}\label{eqn:phi_hat_expn}
    \begin{aligned}
        \mathbb{E}[\prod\limits_{j=1}^n\hat{\phi}^\theta(z,r,\xi_j)\prod\limits_{l=1}^n\hat{\phi}^{\theta *}(z,r,\zeta_l)]  &=\frac{1}{\eta^{2nd}}e^{\frac{i}{\eps^\beta\eta}r\cdot\sum_{j=1}^n(\xi_j-\zeta_j)}\hat{\mu}^\theta_{n,n}(z,\eta^{-1}v)\\
        &=\frac{1}{\eta^{2nd}}e^{\frac{i}{\eps^\beta\eta}r\cdot\sum_{j=1}^n(\xi_j-\zeta_j)}\Pi^\theta_{n,n}(z,\eta^{-1}v)[\mathcal{B}^\theta_{n,n}(z,\eta^{-1}v)+\mathscr{R}^\theta_{n,n}(z,\eta^{-1}v)]\,.
    \end{aligned}
    \end{equation}
    
Note that 
\begin{equation*}
    \Pi^\theta_{n,n}(z,v)\mathcal{B}^\theta_{n,n}(z,v)=\mathbb{E}[\prod\limits_{j=1}^n\hat{u}^\eps(z,\xi_j)\hat{u}^{\eps *}(z,\zeta_j)]\,,
\end{equation*}
where $\hat{u}^\eps$ is the Fourier transform of $u^\eps$, the solution to the It\^o-Schr\"{o}dinger equation \eqref{eq:uSDEd} recast when $2k_0=1, R\to 16R$ as
\begin{eqnarray*}
    \mathrm{d}u^\eps=\frac{i\eta}{\eps}\Delta_xu^\eps\mathrm{d}z-\frac{R(0)}{2\eta^2}u^\eps\mathrm{d}z+\frac{i}{\eta}u^\eps\mathrm{d}B,\quad u^\eps(0,x)=u_0^\eps(x)\,.
\end{eqnarray*}
So the first term in the sum~\eqref{eqn:phi_hat_expn} corresponds to the moments of the It\^o-Schr\"{o}dinger equation. Performing an inverse Fourier transform on this gives, using the shorthand notation $\mathrm{d}v=\prod_{j=1}^n (2\pi)^{-2d} \mathrm{d}\xi_j \mathrm{d}\zeta_j$:
\begin{equation*}
    \begin{aligned}
        &\frac{1}{\eta^{2nd}}\int\limits_{\mathbb{R}^{2nd}}\prod\limits_{j=1}^{n}[(e^{i\xi_j\cdot h}-1)(e^{-i\zeta_l\cdot h}-1)]e^{i\sum_{j=1}^n(\xi_j-\zeta_j)\cdot x}e^{\frac{i}{\eps^\beta\eta}r\cdot\sum_{j=1}^n(\xi_j-\zeta_j)}\mathbb{E}[\prod\limits_{j=1}^n\hat{u}^\eps(z,\eta^{-1}\xi_j)\hat{u}^{\eps *}(z,\eta^{-1}\zeta_j)]
        \mathrm{d}v
        \\
        &=\int\limits_{\mathbb{R}^{2nd}}\prod\limits_{j=1}^{n}[(e^{i\xi_j\cdot (\frac{r}{\eps^\beta}
        +\eta x+\eta h)}-e^{i\xi_j\cdot (\frac{r}{\eps^\beta}+\eta x)})(e^{-i\zeta_l\cdot(\frac{r}{\eps^\beta}+\eta x +\eta h) }-e^{-i\zeta_l\cdot(\frac{r}{\eps^\beta}+\eta x) })]\mathbb{E}[\prod\limits_{j=1}^n\hat{u}^\eps(z,\xi_j)\hat{u}^{\eps *}(z,\zeta_j)]
        \mathrm{d}v
        \\
        &=\mathbb{E}|\phi^\eps(z,r,x+h)-\phi^\eps(z,r,x)|^{2n}:=\mathbb{E}|\delta_h\phi^\eps(z,r,x)|^{2n}\,,
    \end{aligned}
\end{equation*}
    where $\phi^\eps(z,r,x)=u^\eps(z,\eps^{-\beta}r+\eta x)$ as in \eqref{eq:phitheta}.~\eqref{eqn:phi_hat_expn} can now be written as
        \begin{equation*}
            \begin{aligned}
              &  \mathbb{E}|\delta_h\phi^\theta(z,r,x)|^{2n}=\mathbb{E}|\delta_h\phi^\eps(z,r,x)|^{2n} + \\
              &\frac{1}{\eta^{2nd}}\int\limits_{\mathbb{R}^{2nd}}\prod\limits_{j=1}^{n}[(e^{i\xi_j\cdot h}-1)(e^{-i\zeta_l\cdot h}-1)]e^{\frac{i}{\eps^\beta\eta}r\cdot\sum_{j=1}^n(\xi_j-\zeta_j)}\Pi^\theta_{n,n}(z,\eta^{-1}v)\mathscr{R}^\theta_{n,n}(z,\eta^{-1}v)e^{i\sum_{j=1}^n(\xi_j-\zeta_j)\cdot x}
              \mathrm{d}v
              .
            \end{aligned}
        \end{equation*}
        The first term corresponds to moments from the It\^o-Schr\"{o}dinger equation which was analyzed in \cite[Theorem 2.7]{bal2024complex}.  The result there shows that 
        \begin{equation*}
           \sup_{s\in[0,z]} \mathbb{E}|\delta_h\phi^\eps(z,r,x)|^{2n}\le C(z,n)|h|^{2n\alpha}\,,
        \end{equation*}
        where $0<\alpha\le 1$ in the kinetic regime and $0<\alpha<1$ in the diffusive regime. 
    
    The second part can be bounded as follows. Performing a change of variables $\eta^{-1}v\to v$ shows that this is bounded by
        \begin{equation*}
            \begin{aligned}
    \int\limits_{\mathbb{R}^{2n}}\prod\limits_{j=1}^{n}[|e^{i\eta\xi_j\cdot h}-1||e^{-i\eta\zeta_l\cdot h}-1|]|\mathscr{R}^\theta_{p,q}(z,v)|\prod\limits_{j=1}^n\mathrm{d}\xi_j\mathrm{d}\zeta_j&\le \eta^{2n}|h|^{2n}\|\prod\limits_{m=1}^{2n}\langle v_m\rangle \mathscr{R}^\theta_{n,n}(z,v)\|\,.
            \end{aligned}
    \end{equation*}
        Due to Theorem~\ref{thm:psi_theta_decomp}, we have that this is bounded by
        \begin{equation*}
           |h|^{2n}\theta\eta^{2n-2}e^{\frac{\lambda[{\color{black}C},n,n,z,1]}{\eta^2}}\|\prod\limits_{m=1}^{2n}\langle v_m\rangle^2\psi^\theta(0,v)\|,
        \end{equation*}
        where from~\eqref{eqn:lambda_def}, ${\color{black}\lambda[C,n,n,z,1]=c(n,z)\int_{\mathbb{R}^{d+1}}\langle s\rangle\langle k\rangle^{2+2n}|\hat{C}(s,k)|\mathrm{d}s\mathrm{d}k}$. Due to assumption~\eqref{eqn:C_n_assump}, we can find an $n$ such that $2n\ge d+1$ and $\|\langle k\rangle^{2+2n}\hat{R}(k)\|$ is bounded so that the term above is bounded by $|h|^{2n}$ (and thereby $|h|^{2n\alpha}$) uniformly in $\theta$. 
        
       The tightness and stochastic continuity of the process $x\to\phi^\theta(z,r,x)$ in $C^{0,\alpha_{-}}(\mathbb{R}^d)$ is then a standard result; see, e.g., \cite{kunita1997stochastic}.
\end{proof}
{\subsection{Proof of Theorem~\ref{thm:phi_r_indep}}\color{black}
\begin{proof}
It suffices to verify that 
\begin{equation*}
    m^\theta_{p_1+p_2,q_1+q_2}(z,r,X,Y,r',X',Y')=\mathbb{E}\prod_{j=1}^{p_1}\phi^\theta(z,r,x_j)\prod_{j=1}^{p_2}\phi^\theta(z,r',x'_j)\prod_{l=1}^{q_1}\phi^{\theta\ast}(z,r,y_l)\prod_{l=1}^{q_2}\phi^{\theta\ast}(z,r',y'_l)
\end{equation*}
is asymptotically equal to 
\begin{equation*}
    m_{p_1,q_1}(z,r,X,Y)m_{p_2,q_2}(z,r',X',Y')=\mathbb{E}[\prod_{j=1}^{p_1}\phi(z,r,x_j)\prod_{l=1}^{q_1}\phi^\ast(z,r,y_l)]\mathbb{E}[\prod_{j=1}^{p_2}\phi(z,r',x'_j)\prod_{l=1}^{q_2}\phi^\ast(z,r',y'_l)]\,.
\end{equation*}
From Theorem~\ref{thm:mu_theta_decomp_final}, $m^\theta_{p_1+p_2,q_1+q_2}$ is well approximated by sums of products of second moments. We have that this is equal to the product $m_{p_1,q_1}(z,r,X,Y)m_{p_2,q_2}(z,r',X',Y')$ if the two-point correlation between points centred at $r$ and $r'$ are asymptotically zero. For simplicity, we only prove the case for when $\beta=1$. From Theorem~\ref{thm:mu_theta_decomp_final},
\begin{equation*}
    \begin{aligned}
        \mathbb{E}\phi^\theta(z,r,x)\phi^{\theta\ast}(z,r',y')=m_{1,1}^\theta(z,r,r',x,y')+c(z)(\eps^{\frac13}+\theta^{\frac12})\,,
    \end{aligned}
\end{equation*}
where the second moment $m_{1,1}^\theta$ is given explicitly by~\cite{bal2024complex}
\begin{equation*}
    \begin{aligned}
        m_{1,1}^\theta(z,r,r',x,y')&=\int\limits_{\mathbb{R}^{2d}}e^{i\eps^{-1}(\xi\cdot r-\zeta\cdot r')}e^{i\eta(\xi\cdot x-\zeta\cdot y')}e^{-\frac{i\eta z}{\eps}(|\xi|^2-|\zeta|^2)}\hat{u}^\eps_0(\xi)\hat{u}^{\eps\ast}_0(\zeta)\\
        &\times\exp\Big(\frac{z}{\eta^2}\int_0^1Q\big(\frac{r'-r}{\eps}+\eta(y'-x)+\frac{2\eta sz}{\eps}(\xi-\zeta)\big)\mathrm{d}s\Big)\frac{\mathrm{d}\xi\mathrm{d}\zeta}{(2\pi)^{2d}}\,,
    \end{aligned}
\end{equation*}
where $Q(x)=R(x)-R(0)\le 0$. From a change of variables $(\xi/\eps,\zeta/\eps)\to(\xi,\zeta)$, 
\begin{equation*}
    \begin{aligned}
        m_{1,1}^\theta(z,r,r',x,y')&=\int\limits_{\mathbb{R}^{2d}}e^{i(\xi\cdot r-\zeta\cdot r')}e^{i\eta\eps(\xi\cdot x-\zeta\cdot y')}e^{-{i\eta\eps z}(|\xi|^2-|\zeta|^2)}\hat{u}_0(\xi)\hat{u}^{\ast}_0(\zeta)\\
        &\times\exp\Big(\frac{z}{\eta^2}\int_0^1Q\big(\frac{r'-r}{\eps}+\eta(y'-x)+{2\eta sz}(\xi-\zeta)\big)\mathrm{d}s\Big)\frac{\mathrm{d}\xi\mathrm{d}\zeta}{(2\pi)^{2d}}\,.
    \end{aligned}
\end{equation*}
Noting that 
\begin{equation*}
    \begin{aligned}
        \lim_{\eps\to 0}\exp\Big(\frac{z}{\eta^2}\int_0^1Q\big(\frac{r'-r}{\eps}+\eta(y'-x)+{2\eta sz}(\xi-\zeta)\big)\mathrm{d}s\Big)\to 0
    \end{aligned}
\end{equation*}
pointwise, the proof follows from Lebesgue's dominated convergence theorem.
\end{proof}
}
\section{Conclusions}
Starting from the paraxial approximation, we have characterized the limiting statistical distribution of high frequency wavefields propagating over long distances through weak turbulence. In the scintillation regime considered here, we show that an appropriately rescaled macroscopic wavefield is asymptotically described by a complex Gaussian distribution with scintillation saturating to unity. This is consistent with experimental observations of speckle patterns where intensity of the wavefield is modeled by an exponential law. We have used a Gaussian assumption with short range correlations on the refractive index fluctuations to simplify our derivations. This allows us to show that the statistical moments of the wavefield of the paraxial wave equation are well approximated by moments given by closed-form PDEs of the It\^{o}-Schr\"{o}dinger model. Our analysis is primarily done in the Fourier domain with errors controlled in the total variation norm which translates to error estimates in the uniform sense in the physical domain. This choice is important for applications as physical manifestations like speckle are observed predominantly in the spatial domain. This in turn allows us to use the results in~\cite{bal2024complex} where the moments of the It\^{o}-Schr\"{o}dinger equation have been shown to converge to that of a complex Gaussian random variable. The extension of these results to a more general class of turbulence scenarios where the Gaussian assumption on the randomness is relaxed {\color{black}and numerical simulations in the scintillation regime, say using operator splitting techniques~\cite{bal2025splitting} are} left for future investigation.

\section*{Acknowledgments} We would like to thank Christophe Gomez for many stimulating discussions on the validity of the paraxial and It\^o-Schr\"odinger models. {\color{black}We also thank the two anonymous referees for their feedback in helping relax the assumptions on the random potential, suggesting the inclusion of Theorem~\ref{thm:phi_r_indep} and in improving the overall presentation of this paper.}
\bibliographystyle{siam}
\bibliography{Reference}

\end{document}